\numberwithin{equation}{section}
\theoremstyle{plain}
\newtheorem{theorem}{Theorem}[section]
\newtheorem{remark}[theorem]{Remark}
\newtheorem{lemma}[theorem]{Lemma}
\newtheorem{corollary}[theorem]{Corollary}
\newtheorem{proposition}[theorem]{Proposition}
\theoremstyle{definition}
\newtheorem{definition}[theorem]{Definition}
\newtheorem{exampl}[theorem]{Example}
\newcommand{\im}{\mathop{\rm Im}\nolimits}
\newcommand{\vol}{\mathop{\rm vol}\nolimits}
\newcommand{\tr}{\mathop{\rm Tr}\nolimits}
\newcommand{\const}{\mathop{\rm const }\nolimits}
\newcommand{\dive}{\mathop{\rm div}\nolimits}
\DeclareMathOperator{\hess}{Hess}
\newcommand\restr[2]{{
\left.\kern-\nulldelimiterspace 
#1 
\vphantom{\big|} 
\right|_{#2} 
}}
\begin{document}
\title{Weighted monotonicity theorems and applications to minimal surfaces of \( \mathbb{H}^n \) and  \( S^n \)}
\author{Manh Tien Nguyen}
\address{\parbox{\linewidth}{Département de mathématique, Université Libre de Bruxelles,
    Belgique\\ Institut de Mathématiques de Marseille, France}}
\email{nmtien.math@gmail.com}
\maketitle

\begin{abstract}
  We prove that in a Riemannian manifold \( M \), each function whose Hessian is
  proportional the metric tensor yields a weighted monotonicity theorem.  Such function
  appears in the Euclidean space, the round sphere \( {S}^n \) and the hyperbolic space
  \( \mathbb{H}^n \) as the distance function, the Euclidean coordinates of
  \( \mathbb{R}^{n+1} \) and the Minkowskian coordinates of \( \mathbb{R}^{n,1} \).  Then
  we show that weighted monotonicity theorems can be compared and that in the hyperbolic
  case, this comparison implies three \( SO(n,1) \)-distinct unweighted monotonicity
  theorems. From these, we obtain upper bounds of the Graham--Witten renormalised area of a
  minimal surface in term of its ideal perimeter measured under different metrics of the
  conformal infinity. Other applications include a vanishing result for knot invariants
  coming from counting minimal surfaces of \( \mathbb{H}^n \) and a quantification of how
  antipodal a minimal submanifold of \( S^n \) has to be in term of its volume.
\end{abstract}

\section{Introduction}
\label{sec:org542a23b}
The classical monotonicity theorem dictates how a minimal submanifold of the Euclidean
space distributes its volume between spheres of different radii centred around the same
point. In the present paper, we prove the following three monotonicity results for minimal
submanifolds of the hyperbolic space. Let \( \Sigma \) be a minimal \( k \)-submanifold,
\( O \) be an interior point of \( \mathbb{H}^n \) and denote by \( A(t) \) be the
\( k \)-volume of \( \Sigma_t = \Sigma\cap B_{\cosh t}(O) \), the region of \( \Sigma \)
of distance at most \( \cosh t \) from \( O \). Define the function \( Q_{\delta}(a,t) \)
with parameter \( \delta = -1, 0, \) or \( +1 \) as:
\begin{equation}
\label{eq:Qdelta}
Q_\delta(a,t) := \int_a^t(t^2 +\delta)^{\frac{k}{2} - 1}dt + \frac{1}{ka}(a^2 +\delta)^{\frac{k}{2}}
\end{equation}
\begin{theorem}[Time-monotonicity]
\label{thm:intro-time}
Let  \(d \) be the hyperbolic distance from \( O \) to \( \Sigma \), \( a \) be any number
in the interval \([1,\cosh d]\). Then the function
\[
\Phi_a(\Sigma,t) = \frac{A(t)}{Q_{-1}(a,t)}\quad
\text{is increasing in }  t.
\]
\end{theorem}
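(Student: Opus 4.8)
The plan is to reduce everything to the single function \( u=\cosh r \), where \( r=\mathrm{dist}(O,\cdot) \): this is the hyperbolic instance of the paper's privileged function, and an elementary computation (or the general principle that \( \hess u\propto g \)) gives \( \hess u=u\,g \). I parametrise by the value \( t \) of \( u \), so that \( \Sigma_t=\Sigma\cap\{u\le t\} \), \( A(t) \) is its \( k \)-volume, and the two arguments of \( Q_{-1}(a,t) \) sit on the same scale; this is the reading under which the statement is homogeneous. First I would record the two consequences of minimality. Tracing \( \hess u=u\,g \) over \( T\Sigma \) and using \( \vec H=0 \) gives the intrinsic identity \( \Delta_\Sigma u=k\,u \) on \( \Sigma \). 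Second, since \( r \) has unit gradient, \( |\nabla u|=\sinh r=\sqrt{u^{2}-1} \), and because the tangential gradient is an orthogonal projection, \( |\nabla^\Sigma u|\le\sqrt{u^{2}-1} \) pointwise, with equality exactly where \( \nabla u \) is tangent to \( \Sigma \).

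Next I would turn these into a differential inequality for \( A \). By the coarea formula \( A'(t)=\int_{\{u=t\}}|\nabla^\Sigma u|^{-1}\,d\sigma \), while integrating \( \Delta_\Sigma u=ku \) over \( \Sigma_t \) and applying the divergence theorem (the outward conormal on a regular level set being \( \nabla^\Sigma u/|\nabla^\Sigma u| \)) yields \( k\int_{\Sigma_t}u\,dV=\int_{\{u=t\}}|\nabla^\Sigma u|\,d\sigma \). Combining these through the pointwise bound \( |\nabla^\Sigma u|\le\sqrt{t^{2}-1} \) on \( \{u=t\} \) gives \( k\int_{\Sigma_t}u\,dV\le (t^{2}-1)\,A'(t) \). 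Integrating by parts in the coarea variable and using that \( A \) vanishes below \( \cosh d\ge a \), one has \( \int_{\Sigma_t}u\,dV=tA(t)-\int_a^t A \), so that
\[
k\Big(tA(t)-\int_a^t A(\lambda)\,d\lambda\Big)\le (t^{2}-1)\,A'(t).
\]
I would then check that \( Q:=Q_{-1}(a,\cdot) \) satisfies this with equality: both \( k(tQ-\int_a^tQ) \) and \( (t^{2}-1)\,\partial_tQ=(t^2-1)^{k/2} \) have \( t \)-derivative \( kt\,(t^2-1)^{k/2-1} \), and they agree at \( t=a \) because the constant \( \tfrac1{ka}(a^2-1)^{k/2} \) in \( Q_{-1} \) is exactly \( Q(a) \). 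Thus \( A \) is a subsolution and \( Q \) an exact solution of the same linear integro-differential relation.

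To conclude that \( \psi:=A/Q \) is non-decreasing I would run a maximum-principle comparison. Writing \( A=\psi Q \) and subtracting the two relations gives
\[
(t^{2}-1)\,Q(t)\,\psi'(t)=E(t)+k\int_a^t\big(\psi(t)-\psi(\lambda)\big)\,Q(\lambda)\,d\lambda,
\]
with defect \( E\ge0 \) and \( Q>0 \). If \( \psi \) failed to be non-decreasing, pick \( t_2 \) with \( \psi'(t_2)<0 \); then \( \psi(t_2)>0 \) (else \( \psi \) would go negative), and \( t_1:=\arg\max_{[a,t_2]}\psi \) is interior with \( \psi(t_1)>0 \) and \( \psi(t_1)\ge\psi(\lambda) \) for all \( \lambda\in[a,t_1] \). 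At \( t_1 \) every term on the right is \( \ge0 \), forcing \( \psi'(t_1)\ge0 \), while maximality forces \( \psi'(t_1)\le0 \); equality makes the nonnegative integrand vanish, so \( \psi\equiv\psi(t_1) \) on \( [\cosh d,t_1] \), contradicting \( \psi(\cosh d)=0<\psi(t_1) \). Hence \( \psi'\ge0 \) and \( \Phi_a(\Sigma,t)=A(t)/Q_{-1}(a,t) \) is increasing, with equality throughout precisely for a totally geodesic \( \mathbb{H}^k \) through \( O \). I expect this last comparison to be the delicate point: the relation for \( A \) is integro-differential rather than a pointwise ODE, so the conclusion is not a one-line integrating-factor argument but genuinely needs the sign of the defect together with the maximum-principle bootstrap; care is also needed near \( t=\cosh d \), where \( A \) is only Lipschitz and the level sets must be treated for a.e.\ \( t \) via Sard's theorem.
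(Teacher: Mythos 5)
Your reading of the statement (parametrising by the level \(t\) of \(u=\cosh r\), so that \(A\) and \(Q_{-1}\) live on the same scale) is the correct one, and your analytic ingredients are exactly those of the paper: \(\hess(\cosh r)=\cosh r\,g\), hence \(\Delta_\Sigma u=ku\) by minimality (Lemma \ref{lem:chain-rule}), Stokes' theorem, the coarea formula, the bound \(|\nabla^\Sigma u|\le\sqrt{u^2-1}\), and your check that \(Q:=Q_{-1}(a,\cdot)\) solves the resulting relation with equality (the additive constant \(\tfrac1{ka}(a^2-1)^{k/2}\) exists precisely for this) is right. The assembly, however, is genuinely different. The paper never confronts an integro-differential inequality: it observes that your integral term is the \emph{naturally weighted} volume, \(tA(t)-\int_a^tA=\int_{\Sigma_t}u\), proves for it a pointwise logarithmic ODE inequality (Theorem \ref{thm:monotonicity-warped}: \(\bigl(tA-\int_a^tA\bigr)/(t^2-1)^{k/2}\) is non-decreasing), and then descends to the unweighted density via the Comparison Lemma \ref{lem:comparison} and Lemma \ref{lem:compensated-comparison}, which is an integrating-factor computation, not a maximum principle. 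Your route collapses these two steps into one and pays for it in the closing argument.

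That closing argument has two gaps. (i) Your relation \((t^2-1)Q\psi'=E+k\int_a^t(\psi(t)-\psi(\lambda))Q(\lambda)\,d\lambda\) holds only for a.e.\ \(t\), yet you invoke it at the particular point \(t_1\) where \(\psi\) attains its maximum on \([a,t_2]\), where \(\psi\) need not be differentiable. This is repairable: \(F(t):=k\int_a^t(\psi(t)-\psi(\lambda))Q\,d\lambda\) is continuous, and at the \emph{last} maximum point \(t_*\) one has \(F(t_*)>0\) (since \(\psi\) vanishes near \(a\) while \(\psi(t_*)>0\)), so \(F>0\) on a right-neighbourhood of \(t_*\), forcing \(\psi'>0\) a.e.\ there and making \(\psi\) exceed its maximum --- a contradiction. (ii) More seriously, your anchor \(\psi(\cosh d)=0\) does not exist when \(\cosh d=1\), i.e.\ when \(O\in\Sigma\) (which forces \(a=1\) and is exactly Anderson's classical case, allowed by the statement). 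There \(Q(1)=0\), \(\psi(1)\) is an indeterminate form, and constant \(\psi\) genuinely occurs (totally geodesic \(\mathbb H^k\) through \(O\)), so the conclusion ``\(\psi\equiv\psi(t_1)\) near the start'' yields no contradiction; closing this case along your lines would need an equality analysis (\(E\equiv0\) forces \(\nabla u\) tangent to \(\Sigma\)) plus a unique-continuation argument. Both gaps vanish if you split your own inequality into the paper's two steps: setting \(G:=tA-\int_a^tA\) and \(G_Q:=tQ-\int_a^tQ=\tfrac1k(t^2-1)^{k/2}\), your inequality reads \(kG\le(t^2-1)A'\) with \(G'=tA'\) and \(G_Q'=tQ'\) a.e., so
\[
G'G_Q-GG_Q'\;\ge\;tG\left(\frac{kG_Q}{t^2-1}-Q'\right)=0,
\]
i.e.\ \(\mu:=G/G_Q\) is non-decreasing; then integrating \(A'=G'/t\) and \(Q'=G_Q'/t\) by parts (the identity \(Q(a)=G_Q(a)/a\) is again exactly the additive constant in \(Q_{-1}\)) gives \(A\le\mu Q\), whence \(A'\ge\tfrac{kG}{t^2-1}=\mu Q'\ge\tfrac AQ\,Q'\) a.e., so \(A/Q\) is non-decreasing --- uniformly for all \(a\in[1,\cosh d]\), including the degenerate case.
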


When \( a=1 \), Theorem \ref{thm:intro-time} was proved by Anderson \cite[Theorem
1]{Anderson82_CompleteMinimalVarieties}. For any submanifold \( \Sigma \), if
\( \cosh d \geq a_1\geq a_2 \), then the monotonicity of \( \Phi_{a_1} \) implies that of
\( \Phi_{a_2} \). It is therefore optimal to choose \( a \) in Theorem
\ref{thm:intro-time} to be \( a=\cosh d \). In the language of the Comparison Lemma
\ref{lem:comparison}, the monotonicity of \( \Phi_{a_2} \) is \textit{weaker} than that of
\( \Phi_{a_1} \).

There are 2 other versions of Theorem \ref{thm:intro-time}, in which the interior point
\( O \) is replaced by a totally geodesic hyperplane \( H \) and a point \( b \) on the
sphere at infinity \( S_\infty \). We assume that in the compatification of
\( \mathbb{H}^n \) as the unit ball, the closure of \( \Sigma \) does not intersect
\( H \) or contain \( b \). In a half-space model associated to \( b \), this means that
\( \Sigma \) has a maximal height \( x_{\max} < +\infty \).
\begin{theorem}[Space-monotonicity]
\label{thm:intro-space}
Let \( a \) be any number in \( (0,\sinh d]\) and \( A(t) \) be the volume of the region
\( \Sigma_t \) of \( \Sigma \) that is of distance at most \( \sinh t \) from \( H
\). Then the function
\[
 \frac{A(t)}{Q_{+1}(a,t)}\quad
\text{is increasing in } t.
\]
\end{theorem}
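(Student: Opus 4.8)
The plan is to prove Theorem~\ref{thm:intro-space} by the same weighted-monotonicity machinery that underlies the time-monotonicity theorem, but with the distance-to-a-point function replaced by the distance-to-a-hyperplane function. Concretely, let $u$ denote the signed hyperbolic distance to $H$, so that the relevant geometric function is $\rho = \sinh u$, which has the crucial property that its Hessian is proportional to the metric: $\hess \rho = \rho\, g$ on $\mathbb{H}^n$. This is exactly the hypothesis feeding the general weighted monotonicity theorem advertised in the abstract, and the sign $\delta=+1$ in $Q_{+1}$ reflects that $\cosh^2 u = \sinh^2 u + 1$, i.e. $(\rho^2+1)^{1/2}=\cosh u$ is the conformal factor governing how the induced volume element grows. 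The parameter region $a\in(0,\sinh d]$ is the analogue of $a\in[1,\cosh d]$: here $\sinh d$ is the value of $\rho$ at the closest point of $\Sigma$ to $H$, so $\Sigma_t$ is empty for $t<d$ and the lower endpoint $a$ is chosen no larger than this first radius of contact.

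The key computational step is to derive a first-variation/coarea identity on $\Sigma_t$. First I would restrict $\rho=\sinh u$ to $\Sigma$ and compute the Laplacian of $\rho$ on the minimal submanifold. Because $\Sigma$ is minimal, the intrinsic Laplacian of a restricted ambient function equals the trace of the ambient Hessian over the tangent space of $\Sigma$, so
\[
\Delta_\Sigma \rho = \tr_{T\Sigma}\hess\rho = k\,\rho,
\]
using $\hess\rho=\rho\,g$ and $\dim\Sigma=k$. Integrating this over $\Sigma_t$ and applying the divergence theorem converts the bulk integral of $k\rho$ into a flux of $\nabla^\Sigma \rho$ across the boundary $\partial\Sigma_t=\Sigma\cap\{u=t\}$. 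Writing everything in terms of the level-set value $\sinh t$ and using the coarea formula to differentiate $A(t)$, this yields a differential inequality relating $A(t)$, $A'(t)$ and the boundary flux, where the inequality (rather than equality) arises from discarding the nonnegative normal component $|\nabla^\perp \rho|^2\ge 0$ in the flux term — exactly as in the point-based monotonicity argument.

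From this differential inequality the monotonicity follows by checking that $Q_{+1}(a,t)$ is the precise integrating factor that makes $A(t)/Q_{+1}(a,t)$ nondecreasing. I would verify that $Q_{+1}$ solves the associated ordinary differential equation: differentiating \eqref{eq:Qdelta} gives $\partial_t Q_{+1}(a,t)=(t^2+1)^{k/2-1}$, and one checks that the combination dictated by the flux identity, namely $Q_{+1}'/Q_{+1}$ compared against $A'/A$, is arranged so that $(\log(A/Q_{+1}))'\ge 0$. The boundary term at $t=a$ contributed by the second summand $\tfrac{1}{ka}(a^2+1)^{k/2}$ is what makes the inequality hold with the improved (larger) lower endpoint, and the admissibility constraint $a\le\sinh d$ guarantees $\Sigma_t$ genuinely begins to accumulate volume only for $t\ge d$ so that no spurious boundary contribution spoils the sign. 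The main obstacle, as in all such arguments, is handling the two kinds of boundary carefully: the interior level set $\{u=t\}\cap\Sigma$ where the coarea formula applies, and the possible failure of $\rho$ to be smooth or of $\Sigma_t$ to have nice boundary at the critical value $t=d$; I expect this to be dispatched either by working first with regular values and passing to the limit, or by appealing to the Comparison Lemma~\ref{lem:comparison} to reduce the general $a$ to the extremal case, precisely as the text indicates the weighted statements can be compared.
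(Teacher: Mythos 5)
Your first half is indeed the paper's route: the space coordinate $\xi=\sinh(\mathrm{dist}(\cdot,H))$ satisfies $\hess\xi=\xi\, g$, so on a minimal $\Sigma$ one gets $\Delta_\Sigma\xi=k\xi$, and Stokes, the coarea formula and $|\nabla^\Sigma\xi|^2\le V(\xi)=\xi^2+1$ produce a monotone quantity. But what this computation yields is the monotonicity of the \emph{naturally weighted} density $\int_{\Sigma_t}\xi\,\big/\,(t^2+1)^{k/2}$ (Theorem \ref{thm:monotonicity-warped}): the bulk term produced by Stokes is $k\int_{\Sigma_t}\xi$, not $kA(t)$. Your step claiming the flux identity gives ``a differential inequality relating $A(t)$, $A'(t)$'' which then integrates against $Q_{+1}$ as an integrating factor is the gap: no such inequality for $A$ alone follows. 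Concretely, the best direct conversion is $\int_{\Sigma_t}\xi\ge a\,A(t)$ (since $\xi\ge\sinh d\ge a$ on $\Sigma$), which combined with the flux identity gives $A'/A\ge ka/(t^2+1)$; for this to force $\bigl(\log(A/Q_{+1})\bigr)'\ge0$ one would need $ka\,Q_{+1}(a,t)\ge(t^2+1)^{k/2}$, but the two sides are equal at $t=a$ and the right side has the strictly larger derivative for $t>a$ (namely $kt(t^2+1)^{k/2-1}$ versus $ka(t^2+1)^{k/2-1}$), so the needed inequality fails for every $t>a$. The integrating-factor check you propose therefore cannot close the argument.

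What is missing is precisely the passage from the weighted to the unweighted statement, which is where the paper invests the rest of its work: the Comparison Lemma \ref{lem:comparison} together with Lemma \ref{lem:compensated-comparison}(1). One must prove that the uniform weight $(1,a^{-1})$ is \emph{weaker} than the natural weight $(\xi,1)$, i.e.\ the explicit inequality
\[
\int_a^t(h^2+1)^{\frac{k}{2}-1}\,dh+\frac{(a^2+1)^{k/2}}{ka}\;\ge\;\frac{(t^2+1)^{k/2}}{kt},\qquad t\ge a,
\]
(equality at $t=a$ plus a derivative comparison), and then run the ODE argument of Lemma \ref{lem:comparison}: the coarea relation $\frac{dA}{dt}=\frac{1}{t}\frac{d}{dt}\int_{\Sigma_t}\xi$ links the two volumes, and one shows the scaled difference of densities $Q_1(\Theta_1-\Theta_2)$ is monotone with the right sign, which transfers the increase of the weighted density to the unweighted one; here $Q_{+1}(a,t)$ of \eqref{eq:Qdelta} is exactly the $(1,a^{-1})$-volume of a gradient tube, which is why it is the correct normaliser. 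In your write-up the Comparison Lemma appears only as an afterthought, for an inessential purpose (regularity at the first contact level and reducing general $a$ to the extremal case, the latter of which is in fact handled by the trivial comparison $(P,c_1)\ll(P,c_2)$ iff $c_1>c_2$), whereas its actual role is the essential bridge from the weighted monotonicity you do establish to the unweighted monotonicity you were asked to prove.
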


\begin{theorem}[Null-monotonicity]
\label{thm:intro-null}
Let \( a \) be any number in \( (0,x_{\max}^{-1}]\) and \( A(t) \) be the volume of the
region \( \Sigma_t \) of \( \Sigma \) with height at least \( \frac{1}{t} \) in a half
space model associated to \( b \). Then the function
\[
\frac{A(t)}{Q_{0}(a,t)}\quad
\text{is increasing in } t.
\]
\end{theorem}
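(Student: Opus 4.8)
The plan is to recognise Theorem~\ref{thm:intro-null} as the $\delta=0$ member of the same family that produces Theorems~\ref{thm:intro-time} and~\ref{thm:intro-space}, and to run the argument that proves the timelike case with the timelike ``distance'' replaced by a null one. The only case-specific input is the choice of driving function: in the half-space model associated to $b$, with height coordinate $x_n$, I would take $f=1/x_n$. This is the restriction to $\mathbb{H}^n$ of the null linear coordinate $\langle v,\cdot\rangle$ of $\mathbb{R}^{n,1}$, where $v$ is a null vector representing $b$, so that $\hess f = f\,g$ and $|\nabla f|^2 = f^2+\langle v,v\rangle=f^2$; these are exactly the structural identities of the family, now with $\delta=\langle v,v\rangle=0$. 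Writing $\Sigma_t=\Sigma\cap\{f\le t\}=\{x_n\ge 1/t\}$ and $A(t)=\vol(\Sigma_t)$, the hypothesis $x_{\max}<\infty$ (equivalently $b\notin\overline{\Sigma}$) forces $f\ge x_{\max}^{-1}\ge a$ on $\Sigma$ and makes each $\Sigma_t$ compact with boundary only along the horosphere $\{x_n=1/t\}$; this properness is the first point I would verify, since it is what lets the divergence theorem below close without a stray boundary term.

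The analytic engine is then identical to the timelike case. Minimality turns $\hess f=f\,g$ into $\Delta_\Sigma f=kf$. Setting $B(t):=\int_{\{f=t\}}|\nabla^\Sigma f|$, the divergence theorem on $\Sigma_t$ gives $B(t)=k\int_{\Sigma_t}f$, while the coarea formula gives $A'(t)=\int_{\{f=t\}}|\nabla^\Sigma f|^{-1}$ and, after one integration by parts, $B(t)=k\bigl(tA(t)-\int_a^t A\bigr)$ (using $A\equiv 0$ below $f_{\min}\ge a$). Since $|\nabla^\Sigma f|\le|\nabla f|=t$ on $\{f=t\}$, the Cauchy--Schwarz inequality $\int 1\le(\int|\nabla^\Sigma f|)^{1/2}(\int|\nabla^\Sigma f|^{-1})^{1/2}$ combined with $B(t)\le t\,\mathcal{H}^{k-1}(\{f=t\})$ yields the single clean inequality
\[
t^2A'(t)\ \ge\ B(t)\ =\ k\Bigl(tA(t)-\int_a^t A(s)\,ds\Bigr).
\]

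It remains to compare this against $Q_0$. The content of~\eqref{eq:Qdelta} is that $Q:=Q_0(a,\cdot)$ solves the corresponding \emph{equality} $t^2Q'=k(tQ-\int_a^t Q)$, with $Q'(t)=t^{k-2}$ and initial value $Q(a)=\tfrac{1}{ka}(a^2)^{k/2}=a^{k-1}/k$ supplied precisely by the cap term. I would then introduce
\[
H(t)\ :=\ A(t)\!\int_a^t\! Q\ -\ Q(t)\!\int_a^t\! A,
\]
which vanishes at $t=a$. Differentiating and substituting the displayed inequality for $A'$ together with the exact ODE for $Q'$, everything collapses to $H'(t)\ge\tfrac{k}{t}H(t)$; the integrating factor $t^{-k}$ then gives $H\ge 0$ for $t\ge a$. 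Feeding $H\ge 0$ back through the same two relations shows $A'Q-AQ'\ge\tfrac{k}{t^2}H\ge 0$, i.e.\ $A/Q_0(a,t)$ is increasing. That $a\le x_{\max}^{-1}$ is the optimal range, and that a smaller $a$ gives a weaker statement, would then follow from the Comparison Lemma~\ref{lem:comparison} exactly as in the remark after Theorem~\ref{thm:intro-time}.

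I expect the genuine obstacle to lie not in the monotonicity computation---which is the cleanest of the three cases, as $\delta=0$ removes the $\pm1$---but in the two structural inputs: confirming the Hessian identity for $f=1/x_n$ (most transparently by realising $f$ as a null linear coordinate of $\mathbb{R}^{n,1}$), and justifying the divergence theorem near the conformal infinity, where one must invoke $b\notin\overline{\Sigma}$ to rule out area escaping along the ideal boundary. Everything else is a faithful transcription of the proof of Theorem~\ref{thm:intro-time}.
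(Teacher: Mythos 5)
Your proof is correct, and its analytic core is the paper's: the identities $\hess f = f\,g$ and $|\nabla f|^2=f^2$ for $f=1/x_n$ are Example \ref{ex:xi-H} (the paper records that null coordinates are exactly $\lambda/x$ in the half-space model), and your inequality $t^2A'(t)\ge k\bigl(tA(t)-\int_a^t A\bigr)$ — obtained from $\Delta_\Sigma f=kf$, Stokes, coarea, and $|\nabla^\Sigma f|\le|\nabla f|$ — is precisely the statement of Theorem \ref{thm:monotonicity-warped} that the naturally weighted density is increasing, since $\int_{\Sigma_t}f=tA(t)-\int_a^t A$ is the natural weighted volume and $\tfrac{\omega_{k-1}}{k}t^k$ the natural weighted tube volume. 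Where you genuinely depart from the paper is the descent from the weighted to the unweighted statement. The paper gets this from two general lemmas: the Comparison Lemma \ref{lem:comparison}(1) together with Lemma \ref{lem:compensated-comparison}(1), which is an explicit verification that the natural weight $(U,1)$ is stronger than the uniform weight $(1,a^{-1})$, whose tube volume is $\omega_{k-1}Q_0(a,\cdot)$. You instead use only that $Q_0(a,\cdot)$ solves the exact ODE $t^2Q'=k\bigl(tQ-\int_a^t Q\bigr)$ and run a Gronwall argument on $H=A\int_a^t Q-Q\int_a^t A$; this auxiliary function is not the paper's $Q_1(\Theta_1-\Theta_2)$, and no weaker/stronger comparison of weights is ever checked — the tube ODE is the only input about $Q_0$. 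Your route buys a self-contained, single-case proof (made painless by $\delta=0$); the paper's route buys uniformity — the same three lemmas yield Theorems \ref{thm:intro-time}, \ref{thm:intro-space}, and \ref{thm:intro-null} simultaneously, together with the density comparison $\Theta_1\le\Theta_2$ between weights that the paper reuses elsewhere (for instance, Proposition \ref{prop:eps-sphere} rests on the reverse comparison on $S^n$, where the natural weight is weaker than the uniform one). Finally, your attention to properness of $\Sigma_t$ — compactness away from $S_\infty$ because $b\notin\overline{\Sigma}$ — is the correct justification of the boundary-term-free Stokes formula, a point the paper leaves implicit.
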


These theorems can be seen as
statements about volume distribution of a minimal submanifold among level sets of the
time, space and null coordinates of \( \mathbb{H}^n \). These are pullbacks of the
coordinate functions of \( \mathbb{R}^{n,1} \) via the hyperboloid model.  These theorems
are obtained in two steps. First, we prove that for any function \(h\) on a Riemannian
manifold \((M,g)\) with
\begin{equation}
\label{eq:intro-hess}
\hess h = Ug,\quad h\in C^2(M), U\in C^0(M)
\end{equation}
there is a \emph{naturally weighted} monotonicity theorem for minimal submanifolds of
\(M\). This is a statement about the weighted volume distribution of the submanifold among
level sets of \( h \). Here the volume is weighted by \(U\) and the density is obtained by
normalising this volume by that of a gradient tube of \( h \) (see
Definitions \ref{def:weight} and \ref{def:density}). The coordinate
functions of \( \mathbb{R}^{n,1} \) and \( \mathbb{R}^n \) pull back to functions on
\( \mathbb{H}^n \) and \( S^n \) that satisfy \eqref{eq:intro-hess}. In the second step,
we prove a comparison lemma which, in the hyperbolic case, gives us the unweighted volume
distribution which are Theorems \ref{thm:intro-time}, \ref{thm:intro-space},
\ref{thm:intro-null}. More generally, these theorems also hold for \emph{tube extensions}
of minimal submanifolds (cf. Definition \ref{def:tube}).

Since \eqref{eq:intro-hess} is local, more examples for \eqref{eq:intro-hess} can be
constructed as quotient of \( M \) by a symmetry group preserving \( h \). Fuchsian
manifolds are obtained this way (cf. Example \ref{ex:xi-fuchsian}) and Theorem
\ref{thm:intro-space} also holds there, given that one replaces \( H \) by the unique
totally geodesic surface.

There are two major differences, if one is to apply the previous argument for the
Euclidean coordinates of \( S^n \). First, all coordinate functions of \( S^n \) are the
same up to isometry whilst a Minkowskian coordinate \( \xi \) of \( \mathbb{H}^n \) falls
into one of the three types depending on the norm of \( d\xi \) in \( \mathbb{R}^{n,1}
\). Secondly, for the Euclidean coordinates of \( S^n \), the natural weight is
\emph{weaker} than the uniform weight. This means that we cannot recover, from the
Comparison Lemma \ref{lem:comparison}, the unweighted volume distribution of a minimal
submanifold. In fact, the unweighted density of the Clifford torus of \( S^3 \) is not
monotone, even in a hemisphere. Despite this, the framework developed here can be used to
prove (cf. Proposition \ref{prop:eps-sphere}) that the further a minimal submanifold of
\( S^n \) is from being antipodal, the larger its volume has to be. Here we quantify
antipodalness by:

\begin{definition}[]
  \label{def:eps-antipodal}
A subset \( X \) of \( S^n \) is \emph{\( \epsilon \)-antipodal} if
the antipodal point of any point in \( X \) is at distance at most
\( \epsilon \) from \( X \).
\end{definition}

It is clear that 0-antipodal is antipodal and that any subset of
\( S^n \) is \( \pi \)-antipodal. The relation between \( \epsilon \) and the volume
\( A \) of a minimal surface is drawn in Figure \ref{fig:3}. Proposition
\ref{prop:eps-sphere} also confirms, in the non-embedded case, a conjecture by Yau on the
second lowest volume of minimal hypersurfaces of \( S^n \), cf. Remark \ref{rem:yau-conj}.

Another application of the new monotonicity theorems is an isoperimetric inequality for
complete minimal surfaces of \( \mathbb{H}^n \). The area of these surfaces is necessarily
infinite and can be renormalised according to Graham and Witten
\cite{Graham.Witten99_ConformalAnomalySubmanifold}. Let \( A(\epsilon) \) be the area of
the part of the surface that is \( \epsilon \)-far\footnote{see Section
  \ref{sec:renorm-isop-ineq} for a more precise formulation.} from the sphere at infinity
\( S_\infty \). They proved that it has the following expansion
\( A(\epsilon) = \frac{L}{\epsilon} + \mathcal{A}_R + O(\epsilon) \), where the
coefficient \(\mathcal{A}_R \), called the \emph{renormalised area}, is an intrinsic
quantity of the surface. We will give upper bounds of \(\mathcal{A}_R \) by
the length of the ideal boundary measured in different metrics in the conformal class at
infinity. These metrics are obtained as the limit of \( \xi^{-2} g_{\mathbb{H}} \) on
\( S_\infty \), which are finite almost everywhere. Depending on the type of \( \xi \),
they are either round metrics, flat metrics, or doubled hyperbolic metrics.

\begin{theorem}[]
\label{thm:upper-AR}
Let \(\Sigma\subset \mathbb{H}^n\) be a minimal surface with ideal boundary \(\gamma\)
in \( S_\infty \). Let \( O \) be an interior point of \(
\mathbb{H}^n \) and \( |\gamma|_{g_O} \) be the length of \( \gamma \) measured in the
round metric \( g_O \) on \( S_\infty \) associated to \( O \). Then
\begin{equation}
\label{eq:upper-AR-a}
 \mathcal{A}_R(\Sigma) + \frac{1}{2}\left(\cosh d + \frac{1}{\cosh d}\right)|\gamma|_{g_O} \leq 0
\end{equation}
where \( d \) is the distance from \( O \) to \( \Sigma \).
\end{theorem}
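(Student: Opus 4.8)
The plan is to read off the renormalised area as the constant term in the large-radius expansion of \( A \), and to feed this expansion into the Time-monotonicity of Theorem~\ref{thm:intro-time} with \( k=2 \). Write \( R=\operatorname{dist}(O,\cdot) \) for the geodesic radius and \( t=\cosh R \) for the value of the time coordinate \( \xi \), so that \( A=A(R) \) is the area of \( \Sigma\cap B_R(O) \). For \( k=2 \) and \( \delta=-1 \) the integrand \( (t^2-1)^{k/2-1} \) is identically \( 1 \), hence
\begin{equation*}
  Q_{-1}(a,t)=\int_a^t\!dt+\frac{a^2-1}{2a}=t-\tfrac12\Bigl(a+\tfrac1a\Bigr)=\cosh R-\tfrac12\Bigl(a+\tfrac1a\Bigr);
\end{equation*}
for \( a=1 \) this is Anderson's denominator \( \cosh R-1 \). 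Thus Theorem~\ref{thm:intro-time} says that \( \Phi_a(\Sigma,R)=A(R)\big/\bigl(\cosh R-\tfrac12(a+a^{-1})\bigr) \) is increasing, for every \( a\in[1,\cosh d] \).

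The heart of the proof is the expansion
\begin{equation*}
  A(R)=|\gamma|_{g_O}\cosh R+\mathcal{A}_R+o(1),\qquad R\to\infty .
\end{equation*}
The point is that the geodesic ball \( B_R(O) \) is exactly a Graham--Witten cutoff: with \( \rho=2e^{-R} \) one checks \( \rho^2 g_{\mathbb{H}}=d\rho^2+(1-\rho^2/4)^2 g_{S^{n-1}} \) and \( |d\rho|^2_{\rho^2 g_{\mathbb{H}}}=1 \) near \( S_\infty \), so \( \rho \) is a geodesic defining function whose conformal representative on \( S_\infty \) is precisely the round metric \( g_O=g_{S^{n-1}} \). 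Since \( B_R(O)=\{\rho\ge 2e^{-R}\} \), the expansion \( A(\epsilon)=|\gamma|_{g_O}/\epsilon+\mathcal{A}_R+O(\epsilon) \) recalled in the introduction applies with \( \epsilon=2e^{-R} \), i.e. \( 1/\epsilon=\tfrac12 e^R=\cosh R+o(1) \). Substituting gives the displayed expansion: the divergent coefficient is the \( g_O \)-length of \( \gamma \), and the finite part is \( \mathcal{A}_R \) by definition.

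It remains to pass to the limit. Since \( A(R)\sim|\gamma|_{g_O}\cosh R \) and \( Q_{-1}(a,t)\sim\cosh R \), the increasing function \( \Phi_a(\Sigma,\cdot) \) tends to \( |\gamma|_{g_O} \) and therefore satisfies \( \Phi_a(\Sigma,R)\le|\gamma|_{g_O} \) for all \( R \). Multiplying the non-positive quantity \( \Phi_a(\Sigma,R)-|\gamma|_{g_O} \) by \( \cosh R \) and using the expansion of the previous paragraph,
\begin{equation*}
  0\ \ge\ \lim_{R\to\infty}\cosh R\,\bigl(\Phi_a(\Sigma,R)-|\gamma|_{g_O}\bigr)\ =\ \mathcal{A}_R+|\gamma|_{g_O}\cdot\tfrac12\Bigl(a+\tfrac1a\Bigr).
\end{equation*}
Because \( \tfrac12(a+a^{-1}) \) increases on \( [1,\infty) \), the strongest bound is obtained at the largest admissible value \( a=\cosh d \), which is exactly \eqref{eq:upper-AR-a}.

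The main obstacle is the middle step. One must justify that cutting off by geodesic balls centred at \( O \) really is a Graham--Witten regularisation for an honest geodesic defining function, that the induced conformal representative on \( S_\infty \) is the round metric \( g_O \) (so that the divergent coefficient equals \( |\gamma|_{g_O} \) rather than the length in another representative), and that no logarithmic term appears --- the last point uses minimality of \( \Sigma \) and is already built into the expansion quoted in the introduction. Should one prefer to avoid defining-function language, the two coefficients can instead be computed directly: \( dA/dR\sim\sinh R\,|\gamma|_{g_O} \) because \( \Sigma \) meets \( S_\infty \) orthogonally and the geodesic sphere \( \partial B_R(O) \) carries the metric \( \sinh^2\!R\,g_{S^{n-1}} \), so that \( A(R)=|\gamma|_{g_O}\cosh R+\text{const}+o(1) \); the conformal invariance of the renormalised area for minimal surfaces then identifies the constant with \( \mathcal{A}_R \).
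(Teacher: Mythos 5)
Your proof is correct, and its skeleton is the paper's: feed the Graham--Witten expansion into the time-monotonicity of Theorem \ref{thm:intro-time} at the optimal parameter \( a=\cosh d \). Where you genuinely differ is in how monotonicity becomes an inequality. The paper (Section \ref{sec:renorm-isop-ineq}) first proves the finite-radius volume bound of Corollary \ref{cor:est-vol}, \( A(\Sigma\cap\{\xi\le t\})\le|\gamma_t|_{\tilde g}\,Q_{-1}(a,t) \), which needs the tube-extension version of the monotonicity theorem, the tube-comparison Lemma \ref{lem:less-than-tube}, and the slice-length estimate \( |\gamma_t|_{\tilde g}=|\gamma|_{\tilde g}+O(t^{-2}) \) coming from orthogonal intersection with \( S_\infty \); only then does it substitute the expansion \eqref{eq:GW-2}. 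You bypass the tube machinery entirely: the increasing ratio \( \Phi_a \) is bounded above by its limit, the limit is identified as \( |\gamma|_{g_O} \) from the expansion itself, and a second passage to the limit of \( \cosh R\,\bigl(\Phi_a-|\gamma|_{g_O}\bigr) \) extracts the constant term. The trade-off: the paper's route yields a bound at every finite radius with the true slice length \( |\gamma_t| \), a stronger statement that it reuses elsewhere (Corollary \ref{cor:est-curve}, Proposition \ref{prop:minimising-cone}), while yours is leaner, needing only the bare monotonicity statement plus the expansion. Your cutoff is also different but equivalent: you verify directly that \( \rho=2e^{-R} \) is an exactly special bdf whose conformal representative is the unit round metric \( g_O \), whereas the paper takes \( \rho=\xi^{-1} \), special to third order by Lemma \ref{lem:bdfs}; both give \( 1/\epsilon=\cosh R+o(1) \), and your formula \( Q_{-1}(a,t)=t-\tfrac12\bigl(a+a^{-1}\bigr) \) for \( k=2 \) is the right one. (Your closing alternative sketch via \( dA/dR\sim\sinh R\,|\gamma|_{g_O} \) is looser --- identifying its constant term with \( \mathcal{A}_R \) would require the defining-function framework anyway --- but your main argument does not rely on it.)
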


Theorem \ref{thm:upper-AR} relies on a simple remark (Lemma \ref{lem:less-than-tube})
about the extra information gained from a monotonicity theorem on the tube extension of a
minimal submanifold. It implies in particular that the volume of a minimal submanifold of \( \mathbb{H}^n \) is not larger than
that of the tube competitor. The estimate \eqref{eq:upper-AR-a} follows from substituting
the volume bound given by Theorem \ref{thm:intro-time} to the Graham--Witten expansion.

The volume bound corresponding to Anderson's monotonicity (Theorem
\ref{thm:intro-time} with \( a=1 \)) was obtained by Choe and Gulliver
\cite{Choe.Gulliver92_SharpIsoperimetricInequality} by a different method. Plugging
this into the Graham--Witten expansion, we have the following weaker version of \eqref{eq:upper-AR-a}:
\begin{equation}
\label{eq:upper-AR-0}
 \mathcal{A}_R(\Sigma) + \sup_{\text{round } \tilde g}|\gamma|_{\tilde g} \leq 0
\end{equation}
The estimate \eqref{eq:upper-AR-0} was also independently proved by Jacob Bernstein
\cite{Bernstein21_SharpIsoperimetricProperty}.  To illustrate the extra factor in
\eqref{eq:upper-AR-a}, we test it on a family of rotational minimal annuli \( M_C \) in
\( \mathbb{H}^4 \), indexed by a real number \( C \). Here the rotation is given by
changing the phase of the 2 complex variables of \( \mathbb{C}^2\cong \mathbb{R}^4 \) by
an opposite number. The profile curves of these annuli are drawn in Figure
\ref{fig:min-HS}(a). Their ideal boundary are pairs of round circles that form Hopf links
in \( S_\infty \).  Whilst \eqref{eq:upper-AR-0} only says
\( \mathcal{A}_R\leq -4\pi \), \eqref{eq:upper-AR-a} shows that they tend to
\( -\infty \). Moreover, the perimeter term of \eqref{eq:upper-AR-a} also captures the
decreasing rate of \( \mathcal{A}_R \). These two quantities are drawn in Figure
\ref{fig:2}. The graph was plotted in log scale, in which the two curves are
asymptotically parallel. So the gap between them should be interpreted as a
multiplicative factor between the two quantities, which is approximately 1.198.

Theorems \ref{thm:intro-space} and \ref{thm:intro-null} also give two other upper bounds of
area, and thus two other isoperimetric inequalities:

\begin{theorem}[]
\label{thm:upper-AR-space}
Let \( H \), \( b \), \( \Sigma \) be a totally geodesic plane, a boundary point, and a
complete minimal surfaces satisfying the condition of Theorems \ref{thm:intro-space} and
\ref{thm:intro-null}. We denote by \( |\gamma|_{H} \) and \( |\gamma|_b \) the length of
the boundary curve under the doubled hyperbolic metric and the flat metric associated to
\( H \) and \( b \). Then
\begin{equation}
\label{eq:upper-AR-space}
\mathcal{A}_R(\Sigma) + \frac{1}{2}\left(\sinh d - \frac{1}{\sinh
    d}\right)|\gamma|_{H}\leq 0
\end{equation}
\begin{equation}
\label{eq:upper-AR-null}
\mathcal{A}_R(\Sigma) + \frac{1}{2}\frac{|\gamma|_{b}}{x_{\max}} \leq 0
\end{equation}
Here \( d \) is the distance from \( \Sigma \) to \( H \) and \( x_{\max} \) is the
maximal height of \( \Sigma \) in the half-space model where \( b \) is at infinity.
\end{theorem}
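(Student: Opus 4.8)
The plan is to run, \emph{mutatis mutandis}, the argument that proves Theorem \ref{thm:upper-AR}, replacing the time-monotonicity of Theorem \ref{thm:intro-time} by the space- and null-monotonicity of Theorems \ref{thm:intro-space} and \ref{thm:intro-null}. In each case the mechanism is identical. The monotone density \( A(t)/Q_\delta(a,t) \) is increasing, hence bounded above by its limit \( \Lambda_\delta := \lim_{t\to\infty} A(t)/Q_\delta(a,t) \), which I will identify with the length \( |\gamma| \) of the ideal boundary measured in the relevant conformal-infinity metric (\( \delta=+1 \) giving the doubled hyperbolic metric associated to \( H \), and \( \delta=0 \) the flat metric associated to \( b \)). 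Monotonicity then yields the pointwise bound \( A(t)\le \Lambda_\delta\, Q_\delta(a,t) \) for every admissible \( a \), and I feed this into the Graham--Witten expansion to read off an upper bound for \( \mathcal{A}_R(\Sigma) \), exactly as Lemma \ref{lem:less-than-tube} does in the time case.

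First I would pin down the two conformal metrics and the normalisation. The level parameter \( t \) is the value of the relevant Minkowskian coordinate \( \xi \) pulled back to \( \mathbb{H}^n \): for space-monotonicity \( \xi=\sinh\big(\operatorname{dist}(\cdot,H)\big) \), whose zero set is \( H \), and for null-monotonicity \( \xi \) is the null coordinate whose reciprocal is the height in the half-space model at \( b \). Sending \( \xi^{-2}g_{\mathbb{H}} \) to the boundary produces, respectively, the doubled hyperbolic metric (singular along \( \partial_\infty H\cong S^{n-2} \)) and the flat metric associated to \( b \). Here the standing hypotheses are essential: since \( \overline{\Sigma} \) avoids \( H \), the curve \( \gamma \) stays away from the singular locus \( \partial_\infty H \), so \( |\gamma|_H \) is finite; and \( x_{\max}<+\infty \) guarantees that \( \gamma \) does not run off to \( b \), so \( |\gamma|_b \) is finite. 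With the defining function normalised to these metrics I expect the asymptotic relation \( t=1/\epsilon+o(1) \) between the level parameter and the Graham--Witten height \( \epsilon \), whence \( A(t)\sim |\gamma|\,t \) and the leading coefficient \( \Lambda_\delta \) is exactly the ideal perimeter.

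The final step is a comparison of constant terms. Specialising \eqref{eq:Qdelta} to \( k=2 \) gives \( Q_{+1}(a,t)=(t-a)+\frac{a^2+1}{2a} \) and \( Q_0(a,t)=t-\tfrac{a}{2} \), both of the form \( t+c_\delta(a)+o(1) \) with \( c_{+1}(a)=-\tfrac a2+\tfrac1{2a} \) and \( c_0(a)=-\tfrac a2 \). Writing \( A(t)=|\gamma|/\epsilon+\mathcal{A}_R+o(1) \) with \( 1/\epsilon=t+o(1) \), the leading \( |\gamma|\,t \) terms in \( A(t)\le\Lambda_\delta\,Q_\delta(a,t) \) cancel and the surviving comparison reads \( \mathcal{A}_R\le c_\delta(a)\,|\gamma| \). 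Since \( c_{+1} \) and \( c_0 \) are both decreasing in \( a \), the sharpest estimates come from the largest admissible \( a \), namely \( a=\sinh d \) and \( a=x_{\max}^{-1} \), which produce precisely \eqref{eq:upper-AR-space} and \eqref{eq:upper-AR-null}. I expect the main obstacle to be the second paragraph rather than this bookkeeping: because the doubled hyperbolic metric degenerates along \( \partial_\infty H \) and the coordinate \( \xi \) is spacelike (resp.\ null), one must justify the identification \( \Lambda_\delta=|\gamma| \) and the normalisation \( t=1/\epsilon+o(1) \) uniformly up to the boundary, controlling the part of \( \Sigma \) near the singular locus where \( \xi^{-2}g_{\mathbb{H}} \) blows up.
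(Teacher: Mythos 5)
Your proposal follows the paper's own route almost verbatim: the paper proves both estimates (together with \eqref{eq:upper-AR-a}) as the single Theorem \ref{thm:compensated-est-AR}, by combining the volume bound \( A(\Sigma\cap\{\xi\le t\})\le|\gamma_t|_{\tilde g}\,Q_\delta(a,t) \) (Corollary \ref{cor:est-vol}, i.e.\ Lemma \ref{lem:less-than-tube} applied to the space/null monotonicity theorems) with the Graham--Witten expansion rewritten in the variable \( t=\xi=1/\rho \), and then taking the largest admissible \( a \). Your bookkeeping, \( c_{+1}(a)=-\tfrac a2+\tfrac1{2a} \) and \( c_0(a)=-\tfrac a2 \) with \( a=\sinh d \), \( a=x_{\max}^{-1} \), matches the paper's \eqref{eq:est-AR-1} and \eqref{eq:compensated-AR} exactly.

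The one step you leave as an expectation --- that the level parameter \( t \) of the space/null coordinate agrees with the Graham--Witten parameter \( 1/\epsilon \) without shifting the constant term --- is the genuine missing ingredient, and it is precisely the paper's Lemma \ref{lem:bdfs}: the bdf \( \rho=|\xi|^{-1} \) is special \emph{up to third order}, which suffices because such a perturbation changes neither the boundary metric nor the constant term in \eqref{eq:GW-1}. The verification is short. For the null coordinate, \( \rho=\xi^{-1}=x \) is the half-space height, hence exactly special. For the space coordinate, \( \xi=\sinh l \) with \( l \) the distance to \( H \), and \( \bar\rho=2e^{-l} \) is special, so \( \rho=1/\sinh l=\bar\rho+O(\bar\rho^{3}) \); hence \eqref{eq:GW-2} holds with \( \bar g=\tilde g \) the doubled hyperbolic (resp.\ flat) metric. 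Your worry about the degeneracy of \( \tilde g \) along \( \partial_\infty H \) is moot under the standing hypothesis that \( \overline\Sigma \) avoids \( \overline H \): the curve \( \gamma \) stays in a compact region where \( \tilde g \) is smooth. Finally, where you invoke the limit density \( \Lambda_\delta \), the paper instead bounds \( A(t) \) by the tube built on the finite slice \( \gamma_t=\Sigma\cap\{\xi=t\} \) and uses \( |\gamma_t|_{\tilde g}=|\gamma|_{\tilde g}+O(t^{-2}) \), which holds because minimal surfaces meet \( S_\infty \) orthogonally; this is an equivalent way of identifying \( \Lambda_\delta=|\gamma|_{\tilde g} \), so once Lemma \ref{lem:bdfs} is in place your argument closes with no further work.
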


It can be seen either directly from \eqref{eq:upper-AR-a} or from
\eqref{eq:upper-AR-space} and a rescaling argument (cf. Figure \ref{fig:space-est}) that
\(\mathcal{A}_R\leq -2\pi \) for any minimal surface.  The estimate
\eqref{eq:upper-AR-null}, on the other hand, seems to be optimised for a completely
different type of surface than totally geodesic discs. For the discs,
\( \mathcal{A}_R = -2\pi \) whilst the perimeter term in \eqref{eq:upper-AR-null} is
\( +\pi \) and one may hope to drop the constant \( \frac{1}{2} \) there. However, the
family of minimal annuli of \( \mathbb{H}^3 \) found by Mori
\cite{Mori81_MinimalSurfacesRevolution} and whose renormalised area was computed by Krtouš
and Zelnikov \cite{Krtous.Zelnikov14_MinimalSurfacesEntanglement}, show that the constant
\( \frac{1}{2} \) of \eqref{eq:upper-AR-null} cannot be replaced by any number bigger than
\( 0.59 \).

Each \( (k-1) \)-submanifold \( \gamma \) of \( S_\infty \) induces a function
\( V_\gamma \) on \( \mathbb{H}^n \), whose value at a point \( O \) is the volume of
\( \gamma \) under the round metric \( g_O \). The visual hull of \( \gamma \) was defined
by Gromov \cite{Gromov83_FillingRiemannianManifolds} as the set of points where
\( V_\gamma \) is at least the volume of the Euclidean \( (k-1) \)-sphere.  Another
application of Theorem \ref{thm:intro-time} is:
\begin{corollary}[]
  A minimal submanifold of \( \mathbb{H}^n \) is contained in the \textit{visual hull} of
  its ideal boundary.
\end{corollary}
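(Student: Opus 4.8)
The plan is to apply the time-monotonicity theorem at a point of \(\Sigma\) and to read the defining inequality of the visual hull off the monotone density between its two endpoint limits. Fix an interior point \(O\in\Sigma\); then the distance \(d\) from \(O\) to \(\Sigma\) is zero, so \(\cosh d=1\) and the interval \([1,\cosh d]\) of admissible parameters \(a\) collapses to the single value \(a=1\). Theorem \ref{thm:intro-time} then gives that
\[
\Phi_1(\Sigma,t)=\frac{A(t)}{Q_{-1}(1,t)},\qquad Q_{-1}(1,t)=\int_1^t(\tau^2-1)^{\frac{k}{2}-1}\,d\tau,
\]
is nondecreasing in \(t\), where \(A(t)\) is the \(k\)-volume of the part of \(\Sigma\) whose time coordinate about \(O\) is at most \(t\). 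It therefore suffices to compute \(\lim_{t\to1^+}\Phi_1\) and \(\lim_{t\to\infty}\Phi_1\) and to recognise them as \(\vol(S^{k-1})\) and \(V_\gamma(O)\) respectively.

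For the interior limit, write \(\rho=\operatorname{arcosh} t\) for the geodesic radius, so that \(t\to1^+\) means \(\rho\to0\). Since \(O\) is a smooth point of the \(k\)-submanifold \(\Sigma\), the cap \(\Sigma_t\) is to leading order a flat \(k\)-disc of radius \(\rho\), giving \(A(t)=\vol(B^k)\,\rho^k\,(1+o(1))\); on the other hand \((\tau^2-1)^{\frac{k}{2}-1}\sim(2(\tau-1))^{\frac{k}{2}-1}\) near \(\tau=1\) yields \(Q_{-1}(1,t)=\tfrac1k\rho^k\,(1+o(1))\). Hence
\[
\lim_{t\to1^+}\Phi_1(\Sigma,t)=k\,\vol(B^k)=\vol(S^{k-1}),
\]
the common density of every totally geodesic \(k\)-plane through \(O\), whose ideal boundary is a great \((k-1)\)-sphere of \(g_O\)-volume \(\vol(S^{k-1})\).

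For the limit at infinity I would use the geodesic cone. In polar coordinates at \(O\) the metric is \(g=d\rho^2+\sinh^2\!\rho\,g_{S^{n-1}}\), and under the geodesic identification of the sphere of directions at \(O\) with \(S_\infty\) the round metric \(g_{S^{n-1}}\) is exactly \(g_O\). The cone \(C_O(\gamma)\) from \(O\) over the ideal boundary \(\gamma\) then carries the induced metric \(d\rho^2+\sinh^2\!\rho\,g_\gamma\), so its \(k\)-volume inside radius \(R\) equals \(V_\gamma(O)\int_0^R\sinh^{k-1}\!\rho\,d\rho=V_\gamma(O)\,Q_{-1}(1,\cosh R)\). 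As \(\Sigma\) is asymptotic to this cone along \(\gamma\subset S_\infty\), the leading term of \(A(t)\) is \(V_\gamma(O)\,Q_{-1}(1,t)\), so \(\lim_{t\to\infty}\Phi_1=V_\gamma(O)\). Feeding the two limits into the monotonicity of \(\Phi_1\) gives \(\vol(S^{k-1})\le V_\gamma(O)\), which is precisely the condition that \(O\) lie in the visual hull; since \(O\in\Sigma\) was arbitrary, \(\Sigma\) is contained in the visual hull of \(\gamma\).

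The interior limit is routine and the monotonicity is supplied by Theorem \ref{thm:intro-time}; the main obstacle is the identification at infinity. What the argument actually needs is only the inequality \(\limsup_{t\to\infty}\Phi_1\le V_\gamma(O)\), i.e.\ that the area growth of \(\Sigma\) does not exceed that of the cone \(C_O(\gamma)\) to leading order. This is the asymptotic-to-cone behaviour of a minimal submanifold near its smooth ideal boundary; rather than establishing \(C^1\)-convergence to \(C_O(\gamma)\) it suffices to estimate the \(k\)-volume distortion of the radial projection of \(\Sigma\cap\{t\ge t_0\}\) onto the cone and to show it is \(1+o(1)\) as \(t_0\to\infty\). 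If \(V_\gamma(O)=+\infty\) the inclusion is immediate.
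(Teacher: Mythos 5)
Your strategy is the right one and matches the paper's in outline: fix \(O\in\Sigma\), apply the time-monotonicity centred at \(O\) (the case \(a=1\) of Theorem \ref{thm:intro-time}), compute the interior limit \(\omega_{k-1}\) (your computation here is correct), and compare it with a quantity at infinity that should be \(V_\gamma(O)\). The genuine gap is exactly the step you flag as ``the main obstacle'': the identification \(\limsup_{t\to\infty}\Phi_1(\Sigma,t)\leq V_\gamma(O)\). This is not a routine limit. It asserts that the \(k\)-dimensional volume growth of \(\Sigma\) is, to leading order, that of the geodesic cone over \(\gamma\), and this needs more than Hausdorff convergence of \(\Sigma\) to its ideal boundary: by the coarea formula \(dA/dt=\int_{\Sigma,\,\xi=t}1/|\nabla^\Sigma\xi|\), so one must also rule out that \(\nabla^\Sigma\xi\) degenerates relative to \(\nabla\xi\) near \(S_\infty\) (i.e.\ one needs asymptotic orthogonality of \(\Sigma\) to the level sets' foliation, essentially \(C^1\) regularity up to the boundary). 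Your proposed ``radial projection has distortion \(1+o(1)\)'' estimate is precisely this missing boundary-regularity analysis; as written, the proof is incomplete without it.

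The paper's own proof is arranged so that this asymptotic identification is never needed. It derives the corollary from Corollary \ref{cor:est-curve-degen}, whose engine is Lemma \ref{lem:less-than-tube}: truncate \(\Sigma\) at a finite level \(t\), observe that the tube (geodesic cone) extension of the truncated piece still satisfies the monotonicity theorem (the last clause of Theorem \ref{thm:monotonicity-warped}, and the corresponding statement for Theorem \ref{thm:intro-time}), and Lemma \ref{lem:less-than-tube} translates this into the finite-radius slice bound
\[
\Phi_1(\Sigma,t)\ \leq\ |\gamma_t|_{\tilde g},\qquad \gamma_t:=\Sigma\cap\{\xi=t\},
\]
i.e.\ the density at radius \(t\) is dominated by the normalised volume of the slice at that same radius (equivalently: the truncated \(\Sigma\) has no more volume than the cone over its own boundary slice). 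Combined with your interior limit this gives \(|\gamma_t|_{\tilde g}\geq\omega_{k-1}\) for \emph{every finite} \(t\); the only passage to infinity remaining is \(|\gamma_t|_{\tilde g}\to V_\gamma(O)\), the definitional link between slices and the ideal boundary volume coming from \(\tilde g\to g_O\) on \(S_\infty\), which involves only \((k-1)\)-dimensional quantities and no control of \(|\nabla^\Sigma\xi|\). So the fix for your proof is not to prove your distortion estimate but to reroute through Lemma \ref{lem:less-than-tube}: it replaces the hardest paragraph of your proposal with bookkeeping.
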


\begin{proposition}
\label{prop:separation}
Let \(L:=  L_1 \sqcup L_2\) be a separated union of two \( (k-1) \)-submanifold \(L_1,
L_2\) of \({S}^{n-1}\). Then we can rearrange \(L\) in its isotopy class such
that there is no connected minimal \( k \)-submanifold \(\Sigma\) of \(\mathbb{H}^n\)
whose ideal boundary is \(L\).
\end{proposition}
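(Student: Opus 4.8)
The plan is to deduce the statement from the visual-hull Corollary stated above, which asserts that any minimal submanifold \( \Sigma \) with ideal boundary \( \gamma \) lies in \( \mathrm{VisHull}(\gamma) = \{O : V_\gamma(O)\ge \vol(S^{k-1})\} \). Since \( V_\gamma(O) = |\gamma|_{g_O} \) is additive in \( \gamma \), for \( L = L_1 \sqcup L_2 \) we have \( V_L = V_{L_1} + V_{L_2} \). The strategy is to use the isotopy freedom to shrink \( L_1 \) and \( L_2 \) into two tiny, far-apart spherical balls so that \( \mathrm{VisHull}(L) \) becomes disconnected; a connected \( \Sigma \) would then be trapped in one component and could not have ideal boundary meeting both \( L_1 \) and \( L_2 \).

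Concretely, I would work in the ball model with center \( O_0 \), where \( g_{O_0} \) is the standard round metric and the conformal factor of \( g_O \) relative to \( g_{O_0} \) at \( \omega\in S_\infty \) is \( \lambda_O(\omega) = (1-|O|^2)/|\omega - O|^2 \). First fix antipodal points \( p_1,p_2\in S_\infty \) and, using an ambient isotopy of \( S^{n-1} \), replace \( L_i \) by a scaled-down copy contained in a small ball \( B(p_i,r) \) with round volume \( v_i = \vol_{g_{O_0}}(L_i) \) as small as we like (scaling a fixed representative by \( \lambda\to 0 \) in a normal chart forces \( v_i = O(\lambda^{k-1})\to 0 \)). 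Let \( \mathcal{H} \) be the totally geodesic hyperplane through \( O_0 \) whose ideal boundary is the equator \( E \) separating \( B(p_1,r) \) from \( B(p_2,r) \).

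The key estimate is a uniform bound for \( \lambda_O \) on \( B(p_i,r) \) as \( O \) ranges over \( \mathcal{H} \). Writing \( O = (O',0) \) and \( \omega \) near the pole, the denominator \( |\omega - O|^2 \) is bounded below by \( \omega_n^2 \ge \cos^2 r \) while the numerator is at most \( 1 \), giving \( \lambda_O(\omega)\le \cos^{-2}r =: C_r \) for every \( O\in\mathcal{H} \) and \( \omega\in B(p_i,r) \) — crucially uniform even as \( O \) approaches \( E \), since there the factor decays. Hence \( V_L(O)\le C_r^{\,k-1}(v_1+v_2) \) on \( \mathcal{H} \), and choosing the \( v_i \) small enough makes \( V_L < \vol(S^{k-1}) \) there. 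Thus \( \mathrm{VisHull}(L) \) is disjoint from \( \mathcal{H} \), so it is contained in the two open half-spaces \( \mathbb{H}^n\setminus\mathcal{H} = \mathbb{H}^+\sqcup\mathbb{H}^- \).

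Finally, any connected minimal \( \Sigma \) with \( \partial_\infty\Sigma = L \) lies in \( \mathrm{VisHull}(L) \), hence in \( \mathbb{H}^+ \) or \( \mathbb{H}^- \); its ideal boundary is then confined to one closed hemisphere and cannot contain both \( L_1\subset B(p_1,r) \) and \( L_2\subset B(p_2,r) \), a contradiction. I expect the main obstacle to be the bookkeeping in the rearrangement step: making precise that a single isotopy can simultaneously place the \( L_i \) in disjoint balls and drive their round volumes below the threshold dictated by \( C_r \), together with checking that the conformal estimate is genuinely uniform over the noncompact hyperplane \( \mathcal{H} \) up to \( E \). Minimality and monotonicity enter only through the visual-hull Corollary; the remainder is a separation argument on \( S_\infty \).
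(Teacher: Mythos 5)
Your proof is correct, and it shares the paper's overall skeleton --- isotope \( L_1, L_2 \) into small antipodal caps, show that a filling minimal submanifold cannot cross the equatorial totally geodesic hyperplane, and conclude disconnectedness --- but the key step is carried out by a genuinely different mechanism. The paper's proof first invokes the convex hull property of minimal submanifolds to confine \( \Sigma \cap \mathcal{H} \) to a small ball around the origin, and then, if this intersection were non-empty, applies a small M\"obius transformation moving an intersection point to the origin while keeping the round volume of \( L \) below \( \omega_{k-1} \), contradicting the visual hull corollary \emph{at that single point}. You instead prove the stronger intermediate statement that \( \mathrm{VisHull}(L) \) is disjoint from the entire hyperplane \( \mathcal{H} \), via the uniform conformal-factor bound \( \lambda_O(\omega) = (1-|O|^2)/|\omega - O|^2 \le \cos^{-2} r \) for all \( O \in \mathcal{H} \) and \( \omega \) in either cap; your formula for the conformal factor of \( g_O \) is the correct one, and the bound is indeed uniform up to the equator, since there the numerator decays. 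Your route buys two things: it avoids the convex hull property entirely (the paper uses it without proof or citation, in the phrase ``by convexity''), and it yields an explicit quantitative threshold \( v_1 + v_2 < \omega_{k-1}\cos^{2(k-1)} r \) for how small the isotoped volumes must be, rather than the paper's softer perturbation argument. What the paper's version buys in exchange is brevity: checking the visual hull condition at one normalised point is shorter than estimating \( V_L \) over the whole noncompact hyperplane. Both arguments rest on the same two inputs --- the visual hull corollary and the shrinking isotopy, which is legitimate here precisely because ``separated union'' means \( L_1 \) and \( L_2 \) already lie in disjoint balls, so pushing them into antipodal caps and contracting them stays within the isotopy class.
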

\begin{proof}
  In the Poincaré ball model, we isotope \(L\) so that \(L_1\) (or \(L_2\)) is
  contained in a small ball centred at the North (respectively South) pole and so that
  the Euclidean volume of \(L\) is less than \(\frac{1}{2}\omega_{k-1}\).  It suffices to
  prove that any minimal submanifold filling \( L \) has no intersection with the
  equatorial hyperplane. By convexity, such intersection is contained in a small ball
  centred at the origin \(O\). If it was non-empty, by a small Möbius transform we could
  suppose that \(\Sigma\) contains \(O\) while keeping the Euclidean length of \(L\) less
  than \(\omega_{k-1}\).  This is a contradiction.
\end{proof}

The visual hull and the proof of Proposition \ref{prop:separation} can be visualised in
Figure \ref{fig:vishull}.

The motivation for Proposition \ref{prop:separation} comes from recent works of Alexakis,
Mazzeo \cite{Alexakis.Mazzeo10_RenormalizedAreaProperly} and Fine
\cite{Fine21_KnotsMinimalSurfaces}. The counting problem for minimal surfaces of
\( \mathbb{R}^3 \) bounded by a given curve traces back to Tomi--Tromba's resolution of
the embedded Plateau problem \cite{Tomi.Tromba78_ExtremeCurvesBound} and was studied in a
more general context by White \cite{White87_SpaceDimensionalSurfaces}. In the hyperbolic
space, one can ask the ideal boundary of the minimal surface to be an embedded curve
\( \gamma \) in the sphere at infinity. This problem was studied by Alexakis and Mazzeo in
\( \mathbb{H}^3 \) and by Fine in \( \mathbb{H}^4 \). Denote by \( \mathcal{C} \) the
Banach manifold of curves in \( S_\infty \), \( \mathcal{S} \) the space of minimal
surfaces and \( \pi: \mathcal{S} \longrightarrow \mathcal{C} \) the map sending a surface
to its ideal boundary. The counting problem consists of proving \( \mathcal{S} \) is a
Banach manifold and establishing a degree theory for \( \pi \). The degree is constant in
the isotopy class of \( \gamma \) and defines a knot/link invariant. Proposition
\ref{prop:separation} shows that this invariant is multiplicative in \( \gamma
\): any minimal surface filling a separated union of links is union of surfaces filling
each one individually.

It was proved, in the works mentioned above, that the map \( \pi \) is Fredholm and of
index 0. This is because the stability operator is self-adjoint and elliptic or
0-elliptic. The properness of \( \pi \) is however more subtle. In \( \mathbb{H}^4 \), this can be seen via the family
\( M_C \): as \( C \) decreases to \( 0 \), their boundary converges to
the standard Hopf link \( \{zw = 0\}\cap S^3 \) and the waist of \( M_C \) collapses. In
fact, it can be proved (Proposition \ref{prop:minimising-cone}) that the only minimal
surface of \( \mathbb{H}^4 \) filling the standard Hopf link is the pair of
discs. One can still hope that properness holds on a residual subset of
\( \mathcal{C} \), as the previous phenomenon happens in codimension 2 of \( \gamma \).

Since monotonicity theorems are inequalities, they still hold when the Hessian of the
function \(h\) is comparable to the metric as symmetric 2-tensors. We will see in Section
5 that such function arises naturally as the distance function in a Riemannian manifold
whose sectional curvature is bounded from above. When the curvature is negative, this
weighted monotonicity theorem also implies the unweighted version. In the case of positive
curvature, an unweighted monotonicity \emph{inequality} was obtained by Scharrer
\cite{Scharrer21_GeometricInequalitiesVarifolds}.

\vspace{3mm}

\paragraph{\bf Acknowledgements.}
The author thanks Joel Fine for introducing him to minimal surfaces and harmonic maps in
hyperbolic space and for proposing the search of the minimal annuli in \( \mathbb{H}^4 \). He also wants to thank Christian Scharrer for the reference
\cite{Hoffman.Spruck74_SobolevIsoperimetricInequalities} and Benjamin Aslan for helpful discussions on \cite{Berndt.etal16_SubmanifoldsHolonomy} and
\cite{Hsiang.Lawson71_MinimalSubmanifoldsLow}. The author was supported by
\emph{Excellence of Science grant number 30950721, Symplectic techniques in differential
  geometry}.

\begin{figure}%
    \centering
    \subfloat[in \( \mathbb{H}^4 \) with different values of \( C\).]{{\includegraphics[width=6.5cm]{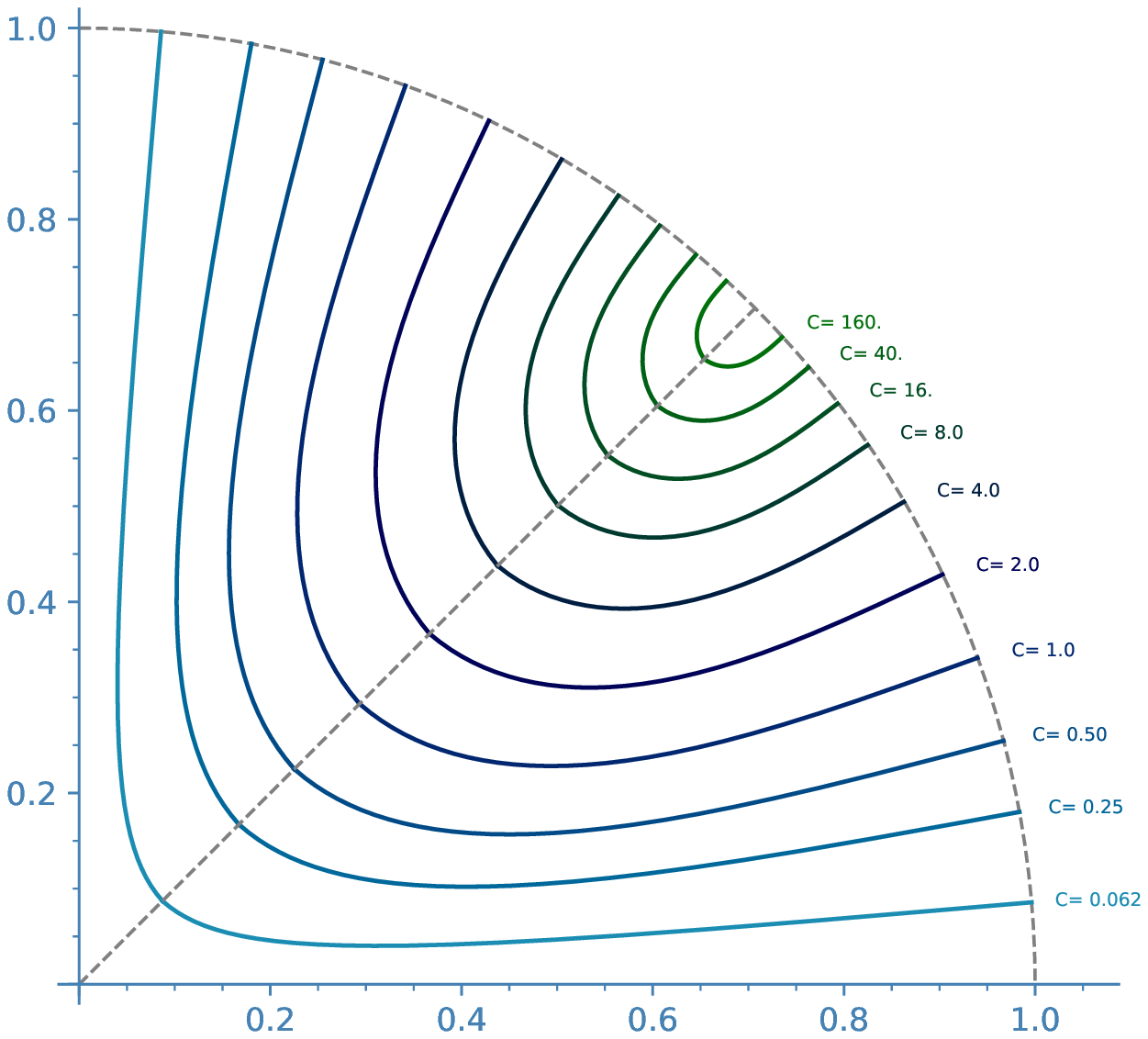} }}%
    \qquad
    \subfloat[in \( {S}^4 \) with \( \theta_C\approx \frac{2\pi}{3} \). The unit circle represents the Clifford torus.]{{\includegraphics[width=6cm]{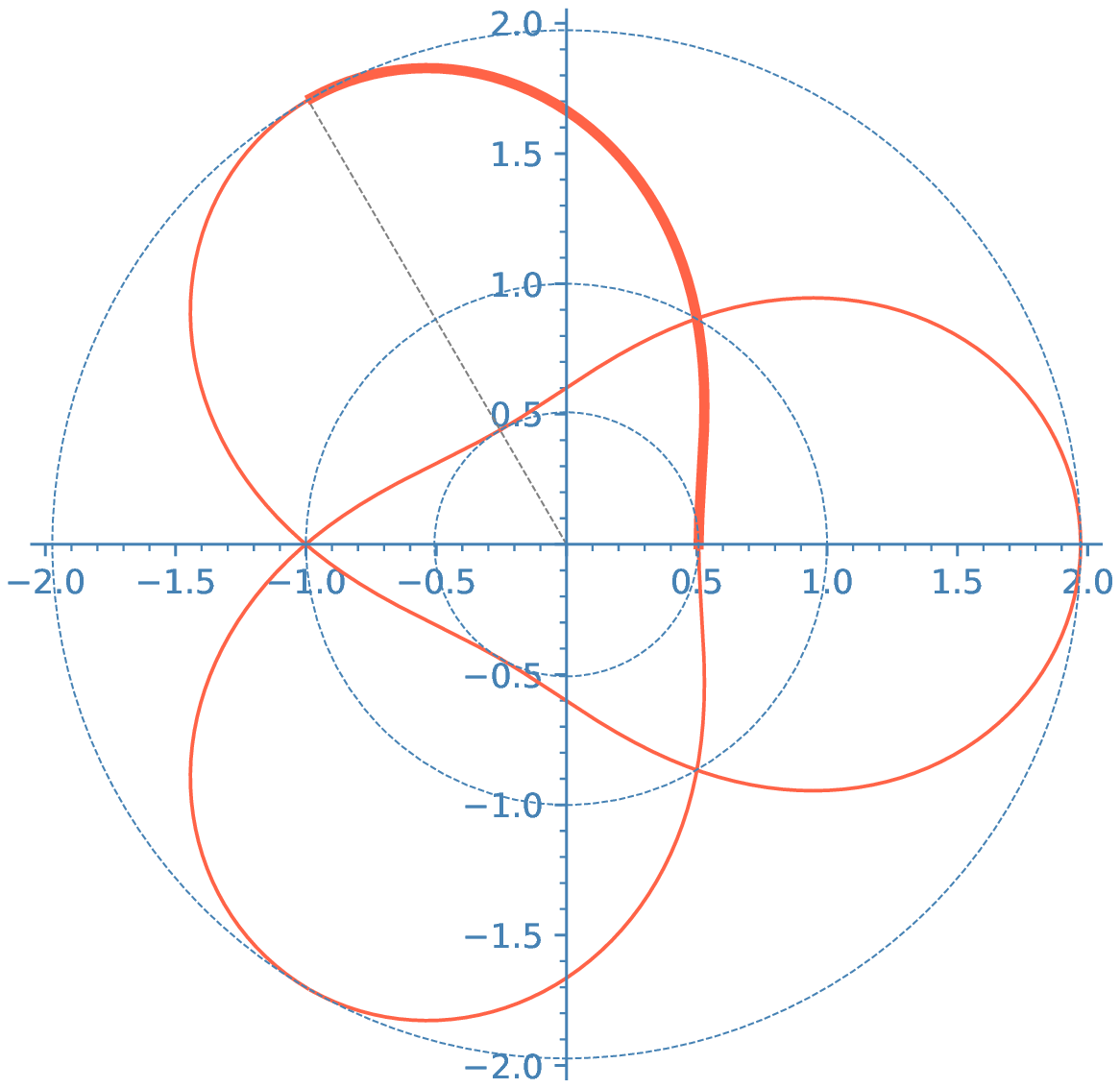} }}%
    \caption{The profile curve of \( M_C \)}%
    \label{fig:min-HS}%
  \end{figure}

  \begin{figure}
    \centering
    \begin{minipage}{0.48\textwidth}
      \centering
        \includegraphics[width=\textwidth]{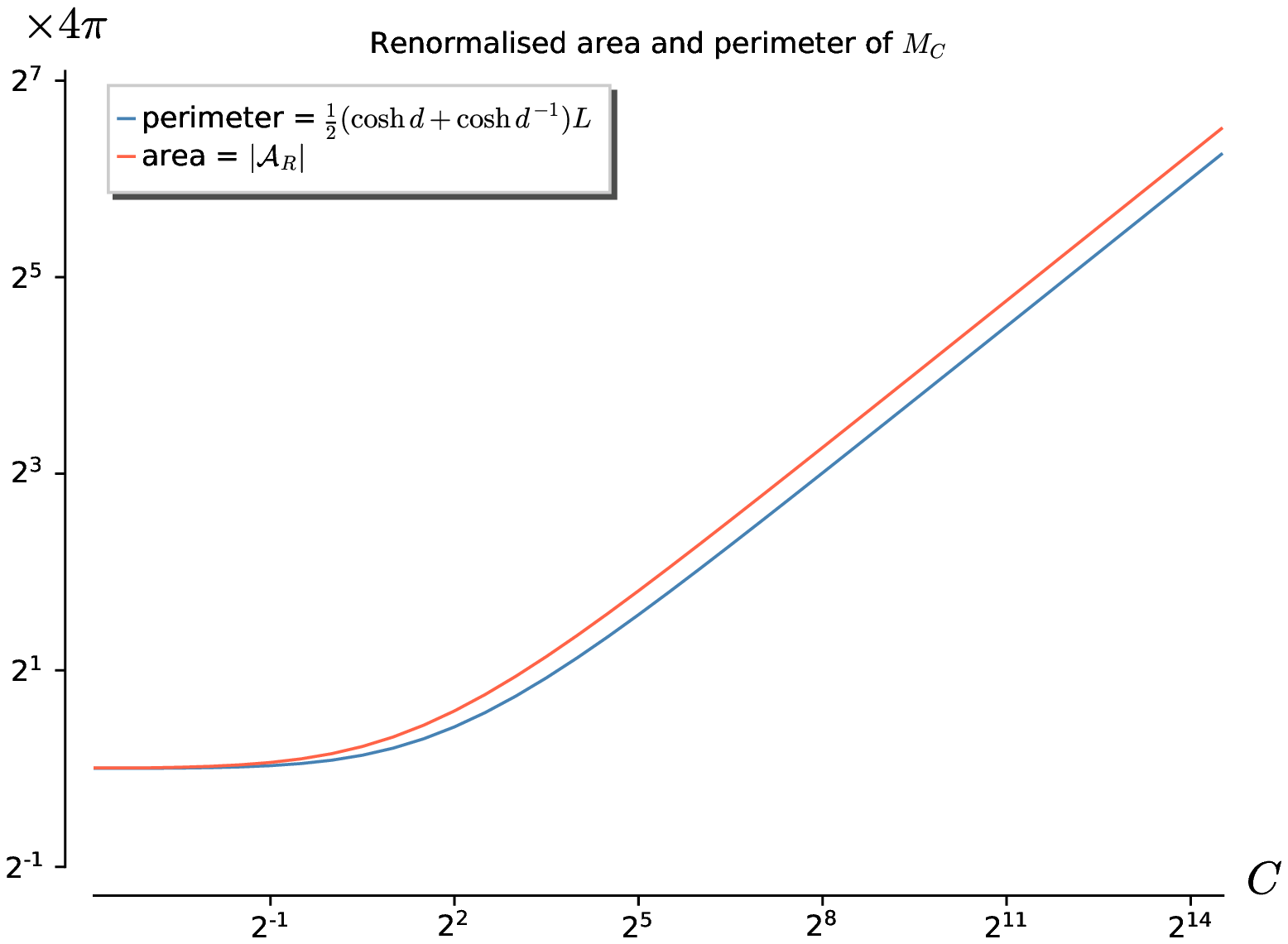} 
        \caption{The area and perimeter terms of
          \eqref{eq:upper-AR-a} for the annuli \( M_c \) in \( \mathbb{H}^4 \).}
        \label{fig:2}
    \end{minipage}\hfill
    \begin{minipage}{0.48\textwidth}
        \centering
        \includegraphics[width=\textwidth]{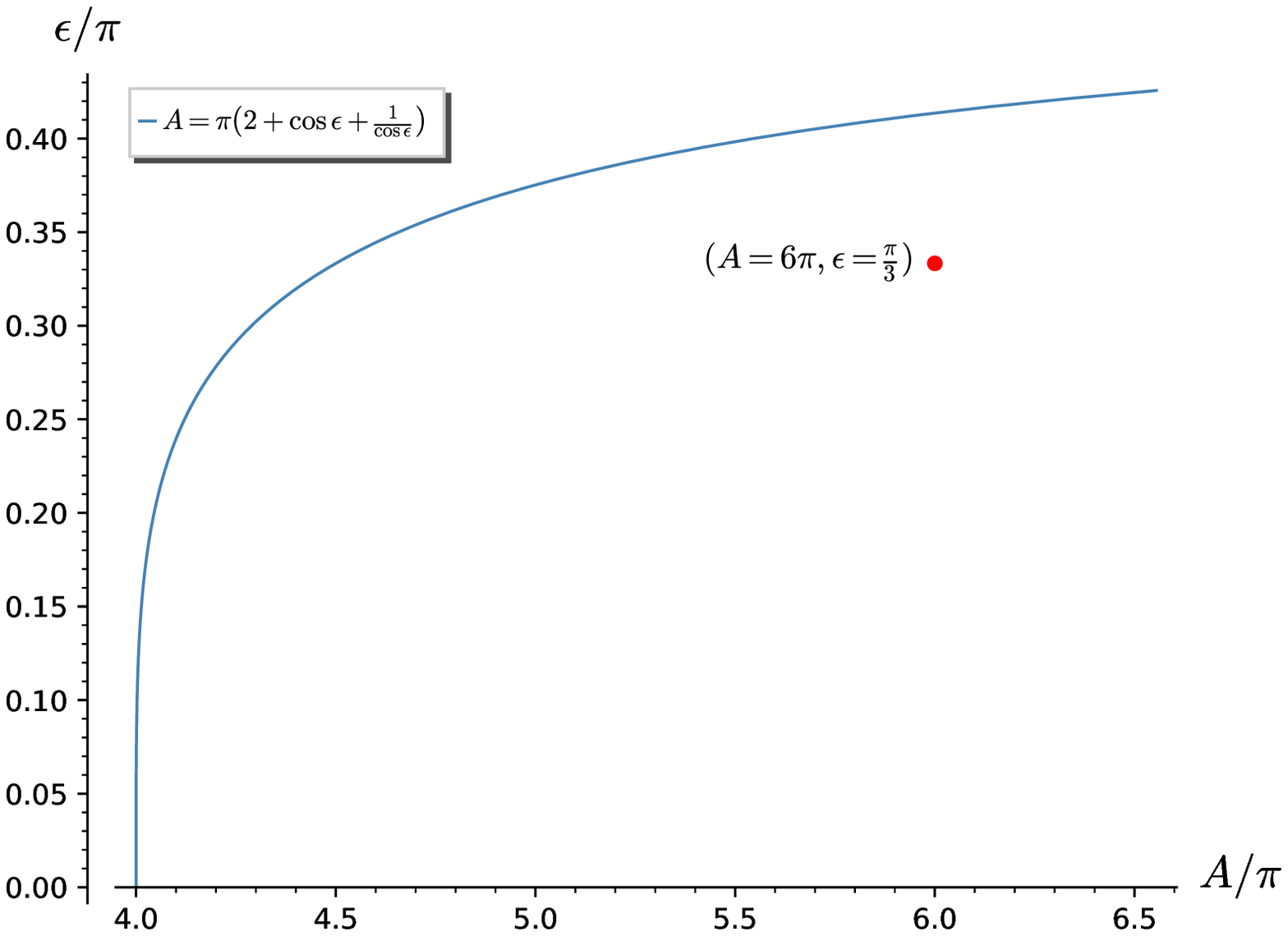} 
        \caption{The function \(\epsilon(A,2)\). The red point is the data for Veronese surface.}
        \label{fig:3}
    \end{minipage}
  \end{figure}

  \begin{figure}
    \centering
    \begin{minipage}[b]{0.45\textwidth}
      \centering
    \centering
    \includegraphics[width=4cm]{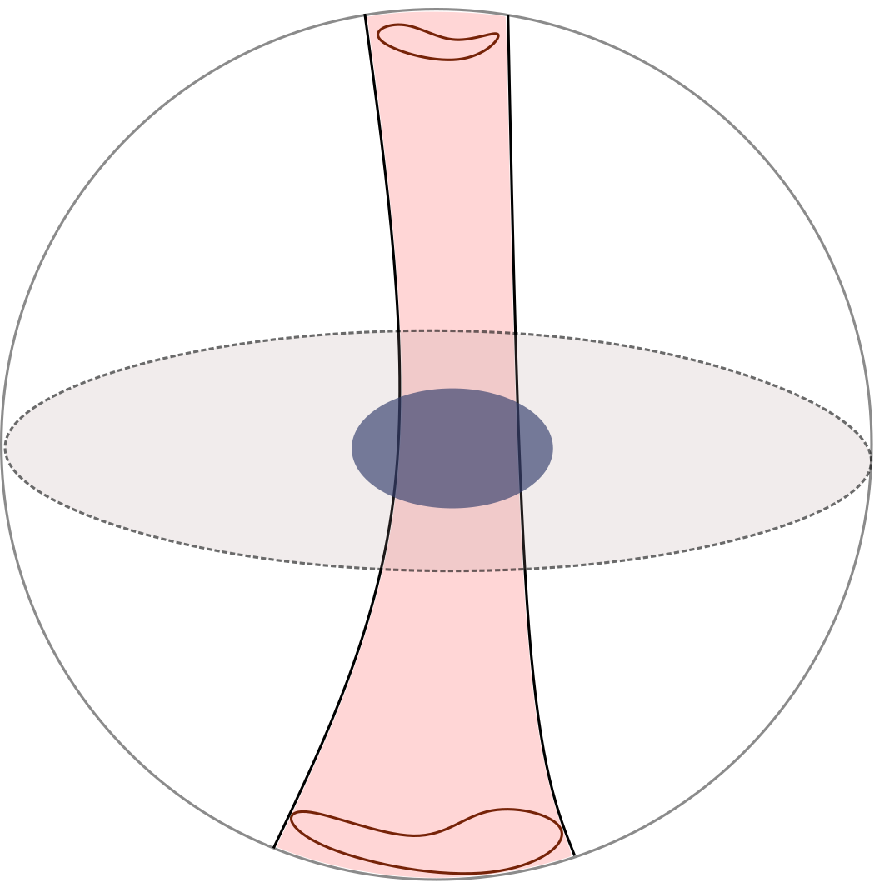} 
    \caption{Any minimal surface \( \Sigma \) filling the link \( L \) (red) is contained in its visual
    hull (pink). It cannot intersect the blue region because of the visual hull. So \(
    \Sigma \) has no intersection with the equatorial disc and has to be disconnected.}
    \label{fig:vishull}
    \end{minipage}\hfill
    \begin{minipage}[b]{0.55\textwidth}
        \centering
    \includegraphics[width=5.5cm]{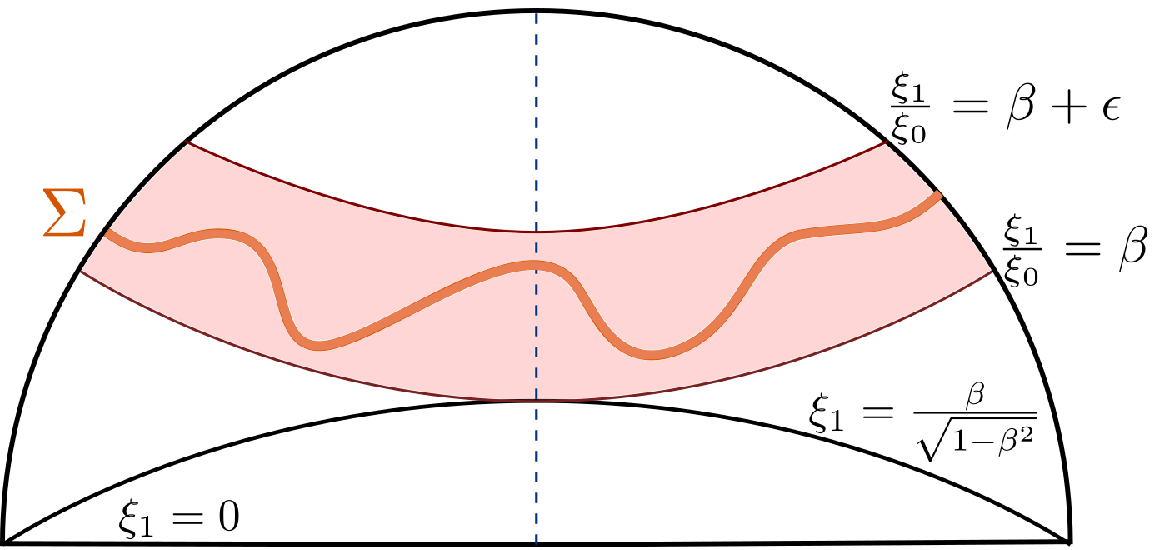} 
    \caption{For a pair \( (\xi_0, \xi_1) \) of time and space coordinates in the same
      Minkowskian system, the function \(\frac{\xi_1}{\xi_0}\) foliates \(\mathbb{H}^n\)
      into totally geodesic leaves. By a Mobius transform, \(\Sigma\) can be put in the
      region \(\beta\leq \frac{\xi_1}{\xi_0}\leq \beta +\epsilon\), which guarantees
      \(\xi_1\geq\ \frac{\beta}{\sqrt{1-\beta^2}}\) on \( \Sigma \). Plugging this into
      \eqref{eq:upper-AR-space} and sending \(\epsilon\) then \( \beta \) to 0, one has
      \(\mathcal{A}_R(\Sigma)\leq -2\pi\).}
    \label{fig:space-est}
    \end{minipage}
  \end{figure}
  
\section{The hyperbolic space and the sphere as warped spaces.}
\label{sec:orga423e7a}
A metric on a Riemannian manifold \(M=N\times [a,b]\) is a \emph{warped product} if it has the
 form 
\begin{equation}
\label{eq:warped-metric}
 g = dr^2 + f^2(r) g_N 
\end{equation}
where \(r\in [a,b]\) and \(g_N\) is a Riemannian metric on \(N\). It can be checked that an anti-derivative \(h\) of the warping function \(f\) satisfies  \(\hess(h) =
f'(r) g\). On the other hand, if such function \(h\) exists, the space can be written
locally as a warped product by the level sets of \( h \).

\begin{proposition}[cf. \cite{Cheeger.Colding96_LowerBoundsRicci}]
\label{prop:cheeger-colding}
Let \(h\) be a \( C^2 \) function on \((M,g)\) with no critical point. Suppose that the level
sets of \(h\) are connected and that
\begin{equation}
\label{eq:xi}
\hess h = U.g
\end{equation}
for a function \(U\in C^0(M)\). Then:
\begin{enumerate}
\item \(U = U(h)\) is a function of \( h \), i.e. a precomposition of \(h\) by a function
  \(U: \mathbb{R}\longrightarrow \mathbb{R}\). So is the function \(V:= |dh|^2\in C^1(M)\)
  and we have \(\frac{dV}{dh} = 2U\).
\item The metrics \(g_a, g_b\) induced from \(g\) on level sets \(h^{-1}(a)\) and \(h^{-1}(b)\) are related
by \(\frac{g_a}{V(a)}= \frac{g_b}{V(b)}\) via the inverse gradient flow of \(h\). 
This defines a metric \(\tilde g\) on level sets of \( h \) under which the flow is isometric. 
The the metric \(g\) pulls back, via the flow map \(h^{-1}(a)\times
{\mathrm{Range}}(h) \longrightarrow M\), to
\begin{equation}
\label{eq:warped-g}
 g = \frac{V(h)}{V(a)}g_a + \frac{dh^2}{V(h)} = V(h)\tilde g + \frac{dh^2}{V(h)}
\end{equation}
which is a warped product after a change of variable \(dr = \frac{dh}{V(h)^{1/2}}\).
\end{enumerate}
\end{proposition}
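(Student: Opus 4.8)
The plan is to work directly from the defining relation, which in terms of the Levi-Civita connection reads \( \nabla_X\nabla h = U\,X \) for every vector field \( X \), and to exploit the reparametrised gradient flow of \( h \) that advances the level value at unit rate. For part (1) I would first differentiate \( V = |dh|^2 = g(\nabla h,\nabla h) \): for any \( X \),
\[
X(V) = 2\,g(\nabla_X\nabla h,\nabla h) = 2U\,g(X,\nabla h) = 2U\,dh(X),
\]
so that \( dV = 2U\,dh \) as \( 1 \)-forms. Along directions tangent to a level set (where \( dh \) vanishes) this gives \( X(V)=0 \), so \( V \) is locally constant on each level set; the connectedness hypothesis then upgrades this to \( V \) being constant on each level set, i.e. \( V=V(h) \). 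Since \( h\in C^2 \) makes \( V \) a \( C^1 \) function on \( M \), the identity \( dV = 2U\,dh \) (with \( dh\neq 0 \), as \( h \) has no critical point) forces \( U \) to be a function of \( h \) as well and yields \( \frac{dV}{dh}=2U \), settling (1).

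For part (2), the absence of critical points gives \( V>0 \) everywhere, so the field \( X := V^{-1}\nabla h \) is well defined, and along its flow \( \Phi_t \) one computes \( \frac{d}{dt}h = g(\nabla h, V^{-1}\nabla h)=1 \); hence \( \Phi_t \) carries \( h^{-1}(a) \) onto \( h^{-1}(a+t) \). I would then compute \( \mathcal{L}_X g \). Beginning from \( \mathcal{L}_{\nabla h}g = 2\hess h = 2Ug \) and the product rule \( \mathcal{L}_{fY}g = f\,\mathcal{L}_Y g + df\otimes Y^\flat + Y^\flat\otimes df \) with \( f=V^{-1} \), \( Y=\nabla h \), and using \( dV = 2U\,dh \) to get \( d(V^{-1}) = -2U V^{-2}\,dh \), one obtains
\[
\mathcal{L}_X g = \frac{2U}{V}\,g - \frac{4U}{V^2}\,dh\otimes dh .
\]
Restricted to vectors tangent to the level sets the second term drops out, so the pulled-back induced metrics \( \hat g_t = \Phi_t^*\!\big(g|_{T h^{-1}(a+t)}\big) \) obey the purely conformal ODE \( \partial_t\hat g_t = \frac{2U}{V}\,\hat g_t \) on the fixed manifold \( h^{-1}(a) \), where the factor \( \frac{2U}{V} \) depends on \( t \) alone.

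Since \( \frac{2U}{V} = \frac{V'}{V} = \frac{d}{dh}\log V \), integrating this ODE gives \( \hat g_t = \frac{V(a+t)}{V(a)}\,g_a \), which is exactly the asserted relation \( \frac{g_b}{V(b)}=\frac{g_a}{V(a)} \); the common value is the metric \( \tilde g \), with respect to which each \( \Phi_t \) is an isometry between level sets. Finally I would assemble the full metric: because \( \nabla h \) is \( g \)-normal to the level sets, \( X \) is orthogonal to them and the flow chart \( F\colon h^{-1}(a)\times\mathrm{Range}(h)\to M \) produces no cross terms. Its tangential block is \( V(h)\,\tilde g = \frac{V(h)}{V(a)}g_a \) by the previous step, and its normal block is \( g(X,X)\,dh^2 = V^{-1}\,dh^2 \), giving \( g = \frac{V(h)}{V(a)}g_a + \frac{dh^2}{V(h)} \); the substitution \( dr = V(h)^{-1/2}\,dh \) converts the normal block into \( dr^2 \) and exhibits \( g = V(h(r))\,\tilde g + dr^2 \) as a warped product with warping function \( \sqrt{V} \).

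The computation is routine once the right flow is chosen, and I expect the only delicate point to be the cancellation in \( \mathcal{L}_X g \). The naive gradient flow of \( \nabla h \) does not identify the level sets isometrically up to scale, and it is precisely the correction term \( -\frac{4U}{V^2}\,dh\otimes dh \) — arising from the \( V^{-1} \) factor — that must be tracked and then seen to vanish on the level sets, leaving a conformal evolution whose factor integrates to \( V \). The other subtlety worth flagging is that the passage \( V = V(h) \) genuinely uses connectedness of the level sets, without which \( V \) could take different constant values on different components.
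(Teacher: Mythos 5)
Your proof is correct and takes essentially the same approach as the paper: part (1) is the identical computation \( dV = 2U\,dh \), and part (2) uses the same inverse gradient flow \( V^{-1}\nabla h \) --- your Lie-derivative identity \( \mathcal{L}_X g = \tfrac{2U}{V}g - \tfrac{4U}{V^2}\,dh\otimes dh \) is just a tensorial repackaging of the paper's computation of \( \tfrac{d}{dt}|v_t|^2 \) via pushed-forward vector fields commuting with the flow, with the same cancellation on level-set directions. The only difference is that you also spell out the final assembly of the warped-product form \eqref{eq:warped-g} (no cross terms, normal block \( V^{-1}dh^2 \)), which the paper leaves implicit.
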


\begin{proof}
For any vector field \(v\), one has 
\begin{equation}
\label{eq:vv}
 v(V) = 2 g(\nabla_v \nabla h, \nabla h)= 2 \hess(h)(v, \nabla h) = 2U g(v, \nabla h)
\end{equation}
It follows, by choosing \(v\) in \eqref{eq:vv} to be any vector field tangent to level
sets of \(h\), that \(V\) is constant on the level sets of \( h \). Then choose \( v \) to be the inverse
gradient \(\frac{\nabla h}{|\nabla h|^2}\), we have \((\nabla h) V = 2U |\nabla h|^2\), so \( U \) is
also a function of \( h \) and \( \frac{dV}{dh} = 2U \).

For the second part, let \(v_t\) be a vector field of \(M\) tangent to level sets \(h^{-1}(t)\) given by 
pushing forward via the flow of \(u\) a vector field \(v_a\) tangent to \(h^{-1}(a)\). By
definition of Lie bracket, \([v_t, u] = 0\) for all \(t\), and so
\[
 \frac{d}{dt}|v_t|^2 = 2 g(\nabla_u v_t, v_t) = 2 g(\nabla_{v_t}u, v_t) =
\frac{2}{|\nabla h|^2}\hess(h)(v_t, v_t) = \frac{V'}{V}|v_t|^2.
\]
Here in the third equality, we used the fact that \( v_t \) is orthogonal to the gradient of \( h \).
We conclude that \(\frac{|v_t|^2}{V(t)}\) is constant along the flow and so \(\frac{g_a}{V(a)} = \frac{g_b}{V(b)}\) for all \(a,b\) in the range of \(h\).
\end{proof}

The metric \(\tilde g\) is the \(g_N\) of \eqref{eq:warped-metric} and the restrictions of
\( \tilde g \) and \( g \) on the level sets of \( h \) are conformal. We will also use
\(\tilde g\) to denote the metric \(V(h)^{-1}g\) on \(M\) and will call it the
\emph{normalised metric}. 

In applications, we will only assume that the function \(h\) satisfies
\eqref{eq:xi} on \(M\) and it is allowed to have critical points, as in the following examples.

\begin{exampl}[]
\label{ex:xi-R}
In the Euclidean space, the only functions satisfying \eqref{eq:xi} are the coordinates
\(x_i, i=1,\dots,n\) (and their linear combinations) and the square of distance
\(\rho := \frac{1}{2}\sum_{i=1}^n x_i^2\).
\end{exampl}

\begin{exampl}[]
\label{ex:xi-S}
In the unit sphere \({S}^n = \{(x_1,\dots, x_{n+1})\in
\mathbb{R}^{n+1}:\sum_{i=1}^{n+1} x_i^2 = 1\}\), the \(x_i\)
 satisfy \eqref{eq:xi} with \(U = -x_i, V = 1-x_i^2\). It is more intuitive to choose
 \( h \) to be \( 1-x_i \), for which \( U= 1-h \) and \( V = 2h-h^2 \).
\end{exampl}

\begin{exampl}[]
\label{ex:xi-H}
In the hyperbolic space \(\mathbb{H}^n=\{(\xi_0,\dots,\xi_n)\in \mathbb{R}^{n,1}:
   \xi_0^2 - \sum_{i=1}^n\xi_i^2=1, \xi_0 > 0\}\), the Minkowskian coordinates \(\xi_\alpha\) satisfies \eqref{eq:xi} with \(U=\xi_\alpha, V = \xi_\alpha^2 + |d\xi_\alpha|^2\). Here 
   \(|d\xi_\alpha|^2 = -1\) if \( \alpha=0 \) and +1 if \( \alpha \geq 1 \). All null (light-type)
   coordinates can be written, up to a Möbius transform as \( \xi_l = \xi_0 - \xi_1 \). They
   satisfy \eqref{eq:xi} with \( U = \xi_l \) and \( V = \xi_l^2 \).
\end{exampl}

Unlike \( S^n \), \( \mathbb{H}^n \) can be written as warped product in 3
\( SO(n,1) \)-distinct ways. Geometrically, each time coordinate is uniquely defined by an
interior point of \( \mathbb{H}^n \) where it is minimised.  Each space coordinate is
uniquely defined by a totally geodesic hyperplane where it vanishes together with a
coorientation. Each point on \( S_\infty \) defines a null coordinate uniquely up to a
multiplicative factor. In the the half space model with height coordinate \( x>0 \), such
functions are given by \( \frac{\lambda}{x}, \lambda >0 \).

The normalised metrics \(\tilde g\) of \((\mathbb{R}^n, \rho), ({S}^n, x_i)\) and
\((\mathbb{H}^n, \xi_0)\) are all round metrics on \({S}^{n-1}\). For a null coordinate
associated to a point \( b \in S_\infty\), \(\tilde g\) is the flat metric on \(
S_\infty\setminus \{ b\} \). For the space coordinate associated to a hyperplane \( H \),
\( \tilde g \) is obtained by putting hyperbolic metrics on each component of \( S_\infty\setminus \overline{H} \). We
will call this a \textit{doubled hyperbolic} metric. In all cases, \( \tilde g \)
lies in the conformal class at infinity.

\begin{exampl}[]
  \label{ex:xi-fuchsian}
Equation \eqref{eq:xi} is local and so will descend to the quotient whenever there is
an isometric group action that preserves \( h \).
The subgroup of \( SO_+(n,1) \) that preserves a space coordinate \( \xi \) of \( \mathbb{H}^n \) is \( SO_+(n-1,1)
\). When \( n=3 \), the quotient of \( \mathbb{H}^3 \) by a discrete subgroup of \(
SO_+(2,1) \) is called a Fuchsian manifold and the metric descends to a warped product:
\[
g = dr^2 + \cosh^2 r. g_\Sigma\quad\text{on } \Sigma\times \mathbb{R}, \qquad \xi = \sinh r
\]
Here \( \Sigma\) is a Riemann surface of genus at least \( 2 \), equipped
with the hyperbolic metric \( g_\Sigma \).
\end{exampl}

\section{Monotonicity Theorems and Comparison Lemma}
\label{sec:orga12f250}
Given a function \( h \) on a Riemannian manifold \( M \) and a submanifold \( \Sigma \), we write \( \int_{\Sigma,
h\leq t} \) and \( \int_{\Sigma, h=t} \) for the integration over the sub-level
\( h^{-1}(-\infty,t] \) and the level set \( h=t
\) of \( h \) on \( \Sigma \).
The gradient vector of \( h \) in \( M \) is denoted by \(
\nabla h \) and its projection to the tangent of \( \Sigma \) by \( \nabla^\Sigma h
\). We will write \( \Delta_\Sigma h = \dive_\Sigma \nabla^\Sigma h \) for the rough Laplacian of \( h \) on \( \Sigma \).
The volume of the \( k \)-dimensional unit sphere will be denoted by \( \omega_k \).
We will fix a function \( h \) on \( M \) that satisfies property \eqref{eq:xi}.

\begin{definition}[]
  \label{def:tube}
  Let \( \gamma \) be a \( (k-1)\)-submanifold of \( M \). The \emph{forward \( h \)-tube} \( T_\gamma^+ \)
(respectively \emph{backward \( h \)-tube} \( T_\gamma^- \))  built upon \( \gamma \) is the \( k
\)-submanifold obtained by flowing \( \gamma \) along the gradient
of \( h \) forwards (respectively backwards) in time. We will denote their union by \(
T_\gamma \) and the intersection of \( T_\gamma \) with the region \( t_1\leq h \leq t_2
\) by \( T_\gamma(t_1,t_2) \).

The \emph{tube extension} of a
submanifold is its union with the forward tube built upon its boundary, as shown in Figure
\ref{fig:tube-extension}.
\end{definition}

\begin{figure}[hbt!]
    \centering
    \includegraphics[width=5.5cm]{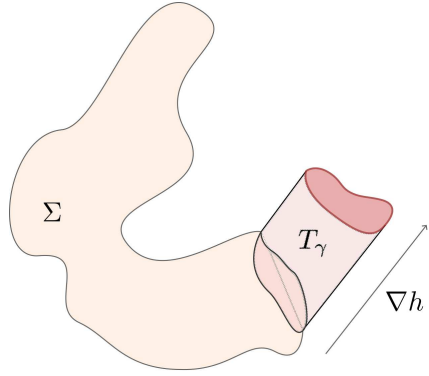} 
    \caption{Extension of a submanifold by gradient tube of \( h \).}
    \label{fig:tube-extension}
\end{figure}

Let \( \gamma \) be a \( (k-1) \)-submanifold of a non-critical level set of \( h=\),
and \(\mathcal{D}\) be a distribution of \(k\)-dimensional planes of \(M\) along \( \gamma
\).

\begin{definition}[]
The \emph{\(\mathcal{D}\)-parallel volume} of \(\gamma\) is
   \[
     \left|\gamma^{\mathcal{D}}  \right|:= \int_{\gamma}\cos \angle(\nabla h, \mathcal{D}) \vol^{k-1}_{\tilde g},
   \]
where the integral was taken with the volume form of the normalised metric \(\tilde g\)
and \(\angle(\nabla h, \mathcal{D})\) is the angle between \(\nabla h\) and \(\mathcal{D}\).
When \(\nabla h\) is contained in \(\mathcal{D}\) at every point of \(\gamma\), \( |\gamma^{\mathcal{D}}| \)
is the \(\tilde g\)-volume of \( \gamma \).
\end{definition}

We will suppose that \( \Sigma \) is a \( k \)-submanifold in the region \( h\geq h_0 \)
of \( M \) and that the intersection of \( \Sigma \) and the level set \( h=h_0 \) has
zero \( k \)-volume. This happens for example when the intersection is empty or a
transversally cut out \( (k-1) \)-submanifold \( \gamma_0 \).

\begin{definition}[]
  \label{def:weight}
  A \emph{weight} is a pair \( (P,c) \) of a function
  \( P: \mathbb{R} \longrightarrow \mathbb{R} \) and a real number \( c\). We define the
  \emph{\( (h_0,(P,c)) \)-volume} of \( \Sigma \) by
\begin{equation}
\label{eq:p-c-weighted}
A_{h_0,(P,c)}(\Sigma)(t) := \int_{\Sigma, h_0\leq h\leq t} P(h) +\frac{c}{k}
|\gamma_0^{T\Sigma}|\ V(h_0)^{k/2}
\end{equation}
\end{definition}

Most of the time, we fix the starting level \( h_0 \) and change the weight and so will often drop
\( h_0 \) in the notation. When \( h_0 \) is chosen to be a
critical value of \( h \), the RHS of \eqref{eq:p-c-weighted} no
longer depends on \( c \) and we drop it in the data of a weight.

Because the metric \( g \) has the form \eqref{eq:warped-g}, the \( (P,c) \)-volume of a
tube \( T_\gamma(h_0,t) \), as a function of \( t \), is independent of the choice of \(
\gamma \) up to a
multiplicative factor:
\begin{equation}
\label{eq:weight-tube}
A_{(P,c)}(T_{\gamma}(h_0,t)) = \frac{|\gamma|_{\tilde g}}{\omega_{k-1}}
Q_{(P,c)}(t),\qquad Q_{(P,c)}(t):= \omega_{k-1}\left[ \int_{h_0}^t P(h)V^{\frac{k}{2} - 1}(h) dh + \frac{c}{k}V(h_0)^{\frac{k}{2}}\right]
\end{equation}

\begin{definition}[]
  \label{def:density}
The \emph{\( (P,c) \)-density} of \( \Sigma \) is obtained by normalising its weighted
volume by that of a tube:
\begin{equation}
\label{eq:weight-density}
\Theta_{(P,c)}(t):= \frac{A_{(P,c)}(\Sigma)(t)}{Q_{(P,c)}(t)}.
\end{equation}
\end{definition}

Clearly, the density of a tube is constant in \( t \).  In \eqref{eq:weight-tube}, we
choose to put the constant \( \omega_{k-1} \) in \( Q \) to be consistent with the
classical monotonicity theorem (\( h = \rho \) in Example \ref{ex:xi-R}). In the classical
case with \( h_0=0 \), \( h \)-tubes are visually cones or discs.

We call the pair \( (U, 1) \) the  \emph{natural weight}. The naturally weighted
volume of tubes simplifies to \( Q_{(U,1)}(t) = \frac{\omega_{k-1}}{k}|\gamma|_{\tilde
  g}V^{k/2}(t)  \), which is independent of the starting level \( h_0 \).

\begin{theorem}[Naturally weighted monotonicity]
\label{thm:monotonicity-warped}
Let \(h\) be a \(C^2\) function on a Riemannian manifold \(M\) that
satisfies \eqref{eq:xi} with \(U\) and \(V= |\nabla h|^2\) being functions of \(h\)
such that \(U = \frac{1}{2}V'\). Let \(\Sigma\) be a minimal
\( k \)-submanifold in \(M\). Then the naturally weighted density
\(\Theta_{(U,1)}(\Sigma)(t)\) is increasing in region \( U>0 \) and decreasing in region \( U<0 \).

Moreover, the conclusion also holds for the tube extension
of minimal submanifolds. 
\end{theorem}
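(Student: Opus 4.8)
The plan is to combine the divergence theorem on \( \Sigma \) with the coarea formula, reducing the monotonicity of \( \Theta_{(U,1)} \) to the sign of a single ``defect'' integrand that is manifestly non-negative. Throughout I write \( A(t) := A_{(U,1)}(\Sigma)(t) \) and use that \( Q_{(U,1)}(t) \) is a fixed (\( t \)-independent) multiple of \( V^{k/2}(t) \); hence the monotonicity of \( \Theta_{(U,1)} = A/Q_{(U,1)} \) is equivalent to that of \( A(t)/V^{k/2}(t) \).

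First I would record the pointwise identity \( \Delta_\Sigma h = kU \) on a minimal \( \Sigma \). Using a local orthonormal frame \( \{e_i\} \) of \( T\Sigma \) and the relation \( \Delta_\Sigma h = \sum_i \hess h(e_i,e_i) + \langle \nabla h, \vec H\rangle \), the mean-curvature term vanishes by minimality and \( \sum_i \hess h(e_i,e_i) = \sum_i U\,g(e_i,e_i) = kU \) by \eqref{eq:xi}. Next, applying the divergence theorem to the tangential field \( \nabla^\Sigma h \) on \( \Sigma\cap\{h\le t\} \)---whose boundary consists of the level \( \{h=t\} \) and the bottom boundary \( \gamma_0 \)---and noting that on a level set the outward conormal is \( \pm\nabla^\Sigma h/|\nabla^\Sigma h| \), I obtain \( k\int_{\Sigma,h_0\le h\le t}U = \int_{\Sigma,h=t}|\nabla^\Sigma h| - \int_{\gamma_0}|\nabla^\Sigma h| \). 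The key bookkeeping step is to verify \( \int_{\gamma_0}|\nabla^\Sigma h| = V(h_0)^{k/2}|\gamma_0^{T\Sigma}| \), which follows by rewriting \( |\gamma_0^{T\Sigma}| \) through \( \cos\angle(\nabla h,T\Sigma) = |\nabla^\Sigma h|/\sqrt V \) and the conformal factor \( V^{-1/2} \) relating \( \tilde g \) to \( g \). This is exactly the boundary term inserted in \eqref{eq:p-c-weighted}, so the two constants cancel and I arrive at the clean identity \( kA(t) = \int_{\Sigma,h=t}|\nabla^\Sigma h| \).

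With this in hand the computation is short. The coarea formula gives \( A'(t) = U(t)\int_{\Sigma,h=t}|\nabla^\Sigma h|^{-1} \), and since \( V' = 2U \) one finds
\[
\frac{d}{dt}\frac{A}{V^{k/2}} = \frac{A'V - kUA}{V^{k/2+1}},\qquad A'V - kUA = U\int_{\Sigma,h=t}\frac{V - |\nabla^\Sigma h|^2}{|\nabla^\Sigma h|}.
\]
Because \( V - |\nabla^\Sigma h|^2 = |\nabla^\perp h|^2 \ge 0 \) is the squared norm of the normal part of \( \nabla h \), the integral is non-negative, so the sign of the derivative is exactly the sign of \( U \). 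This gives monotonicity increasing where \( U>0 \) and decreasing where \( U<0 \).

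For the tube extension \( \hat\Sigma = \Sigma\cup T_\gamma^+ \) the same scheme applies, and the one new observation is that \( \Delta_{\hat\Sigma}h = kU \) still holds on the tube \emph{despite} the tube not being minimal: on \( T_\gamma^+ \) the vector \( \nabla h \) is tangent, so the mean-curvature correction \( \langle\nabla h,\vec H\rangle \) pairs a tangent vector against a normal one and vanishes, while the tangential trace of \( \hess h \) is again \( kU \). Applying the divergence theorem separately on \( \Sigma \) and on \( T_\gamma^+\cap\{h\le t\} \) and adding, the level \( \{h=t\} \) now lies in the tube where \( |\nabla^{\hat\Sigma}h| = \sqrt V \), and the interior junction along \( \gamma \) contributes a single \( t \)-independent constant \( D = \int_\gamma(\sqrt{V(t_\gamma)} - |\nabla^\Sigma h|) \ge 0 \). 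Repeating the derivative computation yields \( A'V - kUA = U\,D \), which again has the sign of \( U \); continuity of \( A \) and \( V \) across the junction level then gives monotonicity of \( \Theta_{(U,1)}(\hat\Sigma) \) on the whole range. The main obstacle is precisely this junction analysis: one must check that the corner of \( \hat\Sigma \) at \( \gamma \) produces a defect of the favourable sign rather than the naive cancellation one might expect, and that \( |\nabla^\Sigma h|\le\sqrt V \) guarantees \( D\ge 0 \).
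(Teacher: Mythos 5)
Your proof of the main statement is correct and is essentially the paper's own argument: the identity \( \Delta_\Sigma h = kU \), Stokes' theorem turning the weighted volume into the flux \( kA(t)=\int_{\Sigma,h=t}|\nabla^\Sigma h| \) (with the boundary term of Definition \ref{def:weight} cancelling the \( \gamma_0 \)-flux exactly as you verify), then the coarea formula together with \( |\nabla^\Sigma h|^2\leq V \). Your explicit formula \( A'V-kUA = U\int_{\Sigma,h=t}\frac{V-|\nabla^\Sigma h|^2}{|\nabla^\Sigma h|} \) is just an unpacked form of the paper's differential inequality \( \frac{1}{U}\frac{dA}{dt}\geq \frac{kA}{V} \), so no difference there.

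There is, however, a gap in your treatment of the tube extension: you implicitly assume that the junction \( \gamma=\partial\Sigma \) lies in a single level set \( \{h=t_\gamma\} \), which is what makes your defect \( D \) a \( t \)-independent constant and lets you say that the level \( \{h=t\} \) lies entirely in the tube. Definition \ref{def:tube} imposes no such restriction, and the paper later relies on this generality (before Corollary \ref{cor:est-vol} it is stressed that \( \gamma \) need not lie in a level set of the Minkowskian coordinate); accordingly, the paper's computation keeps the terms \( \int_{\gamma\cap\{h\leq t\}} \) and lets the level set meet both \( \Sigma \) and the tube. Your mechanism does extend, but you must make two adjustments: (i) along a general \( \gamma \) the \( \Sigma \)-side boundary term is \( \int_{\gamma\cap\{h\leq t\}}\nabla^\Sigma h\cdot n \), not \( \int|\nabla^\Sigma h| \) (these coincide only when \( \gamma \) is a level set of \( h|_\Sigma \)), and one uses \( \nabla^\Sigma h\cdot n\leq|\nabla^\Sigma h|\leq \sqrt{V} \); (ii) the defect becomes the \( t \)-dependent quantity \( D(t)=\int_{\gamma\cap\{h\leq t\}}\bigl(\sqrt{V}-\nabla^\Sigma h\cdot n\bigr)\geq 0 \), and the derivative computation yields \( A'V-kUA = U\bigl[\int_{\tilde\Sigma,h=t}\frac{V-|\nabla^{\tilde\Sigma}h|^2}{|\nabla^{\tilde\Sigma}h|}+D(t)\bigr] \), which is still signed by \( U \) since both bracketed terms are non-negative. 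With those corrections your argument covers the full statement; note also that your observation that only the inequality \( kA\leq\int_{\tilde\Sigma,h=t}|\nabla^{\tilde\Sigma}h| \) is needed (rather than an exact identity) is precisely Remark \ref{rem:monotonicity-warped}.
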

\begin{proof}
The Laplacian of \( h \) on \( \Sigma \) can be computed to be
  \[
    \Delta_\Sigma h = kU.
  \]
This can be seen via the formula:
\begin{lemma}[Leibniz rule]
\label{lem:chain-rule}
Let \(f:(\Sigma,g_\Sigma) \longrightarrow (M, g)\) be a map between Riemannian manifolds and \(\tau(f)\) be its tension
field. Then for any \(C^2\) function \(h\) on \(M\):
\begin{equation}
\label{eq:chain-rule}
\Delta_{\Sigma} (h\circ f) = \tr_{\Sigma} f^*\hess h + dh.\tau(f)
\end{equation}
In particular,
\[
  \Delta_\Sigma h = \tr_\Sigma \hess h
\]
if \(\Sigma\) is either minimal or tangent to the gradient of \(h\).
\end{lemma}

Because the volume forms of \(g\) and \(\tilde g\) on \(\gamma\) are related
by \(\vol_{g}^{k-1} = V^{\frac{k-1}{2}}\vol_{\tilde g}^{k-1}\) and 
\(\cos \angle(\nabla h, T\Sigma).V^{1/2} = |\nabla^\Sigma h|\), the second term in the
definition of weighted volume \eqref{eq:p-c-weighted} is \( \frac{c}{k}\int_{\gamma_0}|\nabla^\Sigma h|  \).
By Stokes' theorem,
\begin{equation}
\label{eq:stokes}
kA_{(U,1)}(\Sigma)(t) = \int_{\Sigma,h_0\leq h\leq t} \Delta_\Sigma h + \int_{\gamma_0}|\nabla^\Sigma h| = \int_{\Sigma,h=t}(\nabla^\Sigma h)\cdot n = \int_{\Sigma,h=t}|\nabla^\Sigma h|.
\end{equation}
The last equality is because the outer normal of the sublevel set \(h\leq t\) in \(\Sigma\) is \(n=
\frac{\nabla^{\Sigma} h}{|\nabla^{\Sigma} h|}\). By the coarea formula, 
\[
 \frac{dA_{(U,1)}(t)}{dt} = U(t)\int_{\Sigma,h = t} \frac{1}{|\nabla^\Sigma h|}.
\]
Combining this with \eqref{eq:stokes} and \(|\nabla^\Sigma h|^2\leq  V(h)\), one has \(\frac{1}{U}\frac{dA_h}{dt} \geq \frac{k A_h}{V}\), or
\[
\frac{1}{U} \frac{d}{dt}\left( \Theta_{(U,1)}(\Sigma)\right)\geq 0.
\]

For the extension \( \tilde\Sigma \) of \( \Sigma \) by a forward tube built upon \( \gamma = \partial\Sigma \), it suffices to rewrite \eqref{eq:stokes} as
\begin{align*}
 kA_{(U,1)}(\tilde \Sigma)(t) &= \left(\int_{\Sigma, h\leq t}+
           \int_{T^+_{\gamma}, h\leq t}\right)\Delta h \\
  &=\int_{\Sigma,h=t}|\nabla^{\Sigma} h| +  \int_{\gamma\cap \{h\leq t\}}|\nabla^{\Sigma} h|+ 
   \int_{T^+_\gamma, h=t} |\nabla h| - \int_{\gamma\cap \{h\leq t\}} |\nabla h| \\
  &\leq  \int_{\Sigma,h=t}|\nabla^{\Sigma} h| +   
   \int_{T_\gamma, h=t} |\nabla h|  = 
   \int_{\tilde\Sigma,h=t}|\nabla^{\tilde\Sigma} h|.
\end{align*}
\end{proof}

\begin{remark}
\label{rem:monotonicity-warped}
We only need the "\(\leq\)" sign in \eqref{eq:stokes} and hence it suffices that \(\Delta_\Sigma h \geq k U(h)\). Theorem \ref{thm:monotonicity-warped}
also holds if \(\hess h\geq
   U.g\), provided that \(U\) and \(V = |d h|^2\) are 
functions of \(h\) with \(U = \frac{1}{2}V'\) (see Section \ref{sec:bounded-curv}).
\end{remark}

The classical monotonicity theorem is recovered by applying Theorem
\ref{thm:monotonicity-warped} to the function \( \rho \) of Example \ref{ex:xi-R}. On the other hand, Choe and
Gulliver \cite[Theorem 3]{Choe.Gulliver92_IsoperimetricInequalitiesMinimal} discovered
that there are monotonicity theorems in \( S^n \) and \( \mathbb{H}^n \) if one weights
the volume functional by the cosine (respectively hyperbolic cosine) of the distance
function. These theorems come from applying Theorem \ref{thm:monotonicity-warped} to
Euclidean coordinates of Example \ref{ex:xi-S} and time
coordinates of Example \ref{ex:xi-H}.

Since the proof of Theorem \ref{thm:monotonicity-warped} only uses integration by part, it
can be adapted to stationary rectifiable \( k \)-currents or \( k \)-varifolds. Instead of
integrating the Laplacian of \( h \), it suffices to apply the first variation formula to
a test vector field that is a suitable cut-off of the gradient of \( h \). See
\cite[Theorem 1]{Anderson82_CompleteMinimalVarieties} and
\cite[\(\S\)7]{Ekholm.etal02_EmbeddednessMinimalSurfaces}

Theorem \ref{thm:monotonicity-warped} can also be adapted for harmonic maps. Given a map \(f:\Sigma \longrightarrow M\), we
define its \emph{dimension at a point} \(p\in \Sigma\) to be the ratio \(\frac{|df_p|^2}{|df_p|_o^2}\)
of the tensor norm of the derivative at \(p\) (also called \emph{energy
density}) and its operator norm, or \(+\infty\) if the latter vanishes. Note that when \(df_p\) is conformal, 
this is the dimension of \(\Sigma\). The \emph{dimension} of \(f\), defined as the smallest
dimension at all points, plays the role of \(k\) in the argument.

The \emph{naturally weighted Dirichlet energy} and \emph{naturally weighted density}  of \(f\)
are defined as
\[
  E_{(U,1)}(t):=
\int_{\Sigma, h_0\leq h\circ f\leq t} U|df|^2 + \int_{\Sigma,h\circ f = h_0}|d(h\circ
f)|,\quad \Theta_{(U,1)}(t):= \frac{E_{(U,1)}(t)}{V(t)^{k/2}}.
\]

\begin{theorem}
\label{thm:monotonicity-map}
Let \(h, U, V\) be as in Theorem \ref{thm:monotonicity-warped} and
\(f:\Sigma \longrightarrow M\) be a harmonic map. Then \(\frac{d}{dt}\Theta_h\) has the same sign as \(U\). 
\end{theorem}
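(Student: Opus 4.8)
The plan is to mirror the proof of Theorem \ref{thm:monotonicity-warped}, replacing the minimal submanifold by the harmonic map \( f \) and tracking where the dimension \( k \) now enters. Write \( \phi = h\circ f \) for the pulled-back function on \( \Sigma \). The first step is to find the analogue of the identity \( \Delta_\Sigma h = kU \). Applying the Leibniz rule of Lemma \ref{lem:chain-rule}, using that \( f \) is harmonic so that \( \tau(f) = 0 \), together with \( \hess h = U g \), I obtain
\[
\Delta_\Sigma \phi = \tr_\Sigma f^*\hess h = U(\phi)\,\tr_\Sigma f^* g = U(\phi)\,|df|^2 .
\]
The conceptual difference from the submanifold case is that the factor \( k \) no longer appears at this stage; instead it will have to enter through an inequality, which is precisely what the notion of dimension of \( f \) is engineered to supply.

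Next I would run the same Stokes--coarea bookkeeping. Integrating \( \Delta_\Sigma\phi = U|df|^2 \) over the region \( h_0\le\phi\le t \) and noting that on the inner boundary \( \{\phi = h_0\} \) the outer normal points toward decreasing \( \phi \), the inner boundary term cancels the explicit boundary contribution in the definition of \( E_{(U,1)} \), leaving
\[
E_{(U,1)}(t) = \int_{\Sigma,\phi=t}|\nabla^\Sigma\phi| .
\]
The coarea formula then gives \( E_{(U,1)}'(t) = U(t)\int_{\Sigma,\phi=t}\frac{|df|^2}{|\nabla^\Sigma\phi|} \). Differentiating \( \Theta_{(U,1)} = E_{(U,1)}/V^{k/2} \) and using \( V' = 2U \) from Proposition \ref{prop:cheeger-colding}, I arrive at
\[
\frac{d}{dt}\Theta_{(U,1)}(t) = \frac{U(t)}{V(t)^{k/2}}\int_{\Sigma,\phi=t}\left(\frac{|df|^2}{|\nabla^\Sigma\phi|} - \frac{k}{V}\,|\nabla^\Sigma\phi|\right),
\]
so the claim reduces to the pointwise inequality \( V|df|^2 \ge k\,|\nabla^\Sigma\phi|^2 \) along the level set.

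The decisive step, and the one I expect to be the main obstacle, is establishing this inequality, since it is where the tensor norm, the operator norm, and the dimension \( k \) must be reconciled. I would argue pointwise: because \( d\phi = dh\circ df \) and \( |dh|^2 = V \), the operator-norm estimate gives \( |\nabla^\Sigma\phi|^2 = |dh\circ df|^2 \le |dh|^2\,|df|_o^2 = V\,|df|_o^2 \); on the other hand, by the very definition of the dimension of \( f \) as the infimum of \( |df_p|^2/|df_p|_o^2 \), one has \( |df|^2\ge k\,|df|_o^2 \) at every point. Combining the two yields \( k\,|\nabla^\Sigma\phi|^2 \le k V\,|df|_o^2 \le V\,|df|^2 \), as required, so the integrand is non-negative and \( \frac{d}{dt}\Theta_{(U,1)} \) has the same sign as \( U \). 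The remaining care is purely technical: restricting to regular values \( t \) so that the level sets \( \{\phi=t\} \) are smooth and the coarea formula applies, and checking that the locus where \( df=0 \) (where the ratio defining the dimension is set to \( +\infty \)) causes no trouble, since there it only makes the inequality slacker.
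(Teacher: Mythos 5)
Your proposal is correct and follows essentially the same route as the paper: Lemma \ref{lem:chain-rule} gives \( \Delta_\Sigma(h\circ f) = U|df|^2 \), Stokes and the coarea formula yield the same identity and derivative for \( E_{(U,1)} \), and your pointwise inequality \( V|df|^2 \geq k\,|d(h\circ f)|^2 \) is exactly the inequality the paper invokes from the definition of the dimension \( k \). The only difference is that you spell out the operator-norm estimate \( |d(h\circ f)| \leq |dh|\,|df|_o \) behind that inequality, which the paper leaves implicit.
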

\begin{proof}
By Lemma \ref{lem:chain-rule}, \(\Delta (h\circ f) = U |df|^2\) and by
integration by part, \(E_{(U,1)}(t) = \int_{\Sigma, h\circ f = t} |d(h\circ f)|\). One then
compares \(E_{(U,1)}\) with its derivative obtained from the coarea formula \(\frac{d}{dt}E_{(U,1)} = U(t) \int_{\Sigma, h\circ f = t}
\frac{|df|^2}{|d(h\circ f)|}\).
The definition of 
\(k\) guarantees \(\frac{|df|^2}{|d(h\circ f)|} \geq k \frac{|d(h\circ f)|}{|dh|^2}\) and
therefore
\[
  U^{-1}\frac{d}{dt}E_{(U,1)}(t)\geq \frac{k}{V}E_{(U,1)},
\]
which is equivalent to \( \frac{1}{U}\frac{d}{dt}\Theta_{h} \geq 0 \).
\end{proof}

We will give another characterisation of the monotonicity in the extended region.  For
this, we suppose that \( \Sigma \) is contained in the region \( h_0 \leq h\leq h_1 \) as
in Figure \ref{fig:tube-2},
with boundary composing of 2 parts: one (possibly empty) in level \( h=h_0 \) and one,
called \( \gamma \), in \( h=h_1 \). Let \( \tilde\Sigma \) be the forward tube extension
of \( \Sigma \) built upon \( \gamma \). Clearly the monotonicity of \( \tilde \Sigma \)
and \( \Sigma \), under any weight, are the same in the region \( [h_0, h_1] \). In the
region \( h\geq h_1 \), the monotonicity of \( \tilde\Sigma \) is equivalent to a volume
comparison between \( \Sigma \) and the tube.
\begin{figure}[hbt!]
    \centering
    \includegraphics[width=6cm]{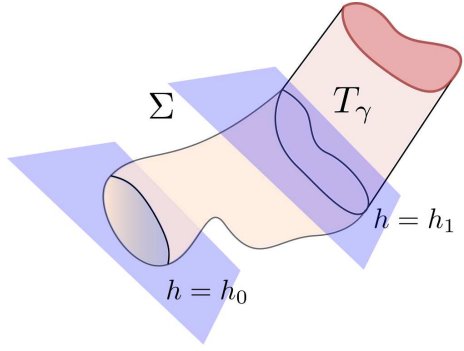} 
    \caption{Illustration of Lemma \ref{lem:less-than-tube}.}
    \label{fig:tube-2}
\end{figure}

\begin{lemma}
  \label{lem:less-than-tube}
Let \(t_1 < t_2\) be two numbers in \([h_1,+\infty)\), then the following statements are
equivalent:
\begin{enumerate}
\item \(\Theta_{(P,c)}(\tilde\Sigma)(t_1) \leq \Theta_{(P,c)}(\tilde\Sigma)(t_2)\) 
\item \(A_{(P,c)}(\Sigma)(h_1) \leq A_{(P,c)}(T_\gamma(h_0, h_1))\)
\end{enumerate}
\end{lemma}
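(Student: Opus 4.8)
The plan is to compute the weighted volume $A_{(P,c)}(\tilde\Sigma)(t)$ explicitly for $t\geq h_1$, show that it splits into a single constant ``defect'' term plus the contribution of the tube, and then observe that both statements reduce to the sign of that defect. First I would use the decomposition $\tilde\Sigma = \Sigma\cup T^+_\gamma$. Since $\Sigma$ lies in $\{h_0\leq h\leq h_1\}$ and the forward tube $T^+_\gamma$ lies in $\{h\geq h_1\}$, the two pieces have disjoint interiors in each slab, and the $h_0$-level boundary of $\tilde\Sigma$ is exactly the $h_0$-level boundary $\gamma_0$ of $\Sigma$; in particular the term carrying the constant $c$ is unchanged and sits entirely in the $\Sigma$-piece. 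Hence, for $t\geq h_1$,
\[
A_{(P,c)}(\tilde\Sigma)(t) = A_{(P,c)}(\Sigma)(h_1) + \int_{T^+_\gamma,\, h_1\leq h\leq t} P(h).
\]

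Next I would evaluate the tube integral using the computation behind \eqref{eq:weight-tube}. Since $\nabla h$ is tangent to $T_\gamma$ and the inverse gradient flow preserves $\tilde g$ by Proposition \ref{prop:cheeger-colding}, the integrals over level sets are constant in $\tilde g$-volume, giving $\int_{T^+_\gamma,\, h_1\leq h\leq t} P(h) = \frac{|\gamma|_{\tilde g}}{\omega_{k-1}}\bigl(Q_{(P,c)}(t) - Q_{(P,c)}(h_1)\bigr)$, while $A_{(P,c)}(T_\gamma(h_0,h_1)) = \frac{|\gamma|_{\tilde g}}{\omega_{k-1}} Q_{(P,c)}(h_1)$. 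Writing $\lambda := |\gamma|_{\tilde g}/\omega_{k-1}$ and $D := A_{(P,c)}(\Sigma)(h_1) - A_{(P,c)}(T_\gamma(h_0,h_1))$, these combine into the key identity
\[
A_{(P,c)}(\tilde\Sigma)(t) = D + \lambda\, Q_{(P,c)}(t), \qquad \Theta_{(P,c)}(\tilde\Sigma)(t) = \lambda + \frac{D}{Q_{(P,c)}(t)}.
\]

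Finally, statement (2) is precisely the inequality $D\leq 0$. Since $Q_{(P,c)}$ is positive and increasing on $[h_1,+\infty)$, one has $Q_{(P,c)}(t_1) < Q_{(P,c)}(t_2)$ and thus $\frac{1}{Q_{(P,c)}(t_1)} > \frac{1}{Q_{(P,c)}(t_2)} > 0$ for $t_1 < t_2$. Therefore statement (1), namely $\Theta_{(P,c)}(\tilde\Sigma)(t_1)\leq \Theta_{(P,c)}(\tilde\Sigma)(t_2)$, is equivalent to $D\bigl(\tfrac{1}{Q_{(P,c)}(t_1)} - \tfrac{1}{Q_{(P,c)}(t_2)}\bigr)\leq 0$, which in turn holds if and only if $D\leq 0$. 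This yields the equivalence (1) $\Longleftrightarrow$ (2).

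The main obstacle I expect is the bookkeeping in the first two steps rather than any analytic difficulty: correctly checking that the $c$-dependent boundary term is attached only to the shared $h_0$-level boundary $\gamma_0$ (so that the forward tube contributes none of it), and confirming that the tube's own weighted volume $A_{(P,c)}(T_\gamma(h_0,h_1))$ reproduces the full $Q_{(P,c)}(h_1)$ including its $\frac{c}{k}V(h_0)^{k/2}$ term, which relies on the $\tilde g$-invariance of the flow to identify $|\gamma|_{\tilde g}$ at every level. Once the identity $A_{(P,c)}(\tilde\Sigma)(t) = D + \lambda\, Q_{(P,c)}(t)$ is in place, the equivalence is immediate from the positivity and monotonicity of $Q_{(P,c)}$.
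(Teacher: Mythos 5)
Your proof is correct and is exactly the paper's argument written out in full: the paper's one-line proof (``straightforward volume addition and the fact that the density of a tube is constant'') is precisely your identity \(A_{(P,c)}(\tilde\Sigma)(t) = D + \lambda\, Q_{(P,c)}(t)\) with \(D = A_{(P,c)}(\Sigma)(h_1) - A_{(P,c)}(T_\gamma(h_0,h_1))\), from which the equivalence is immediate. The only implicit assumption, which the paper shares, is that \(Q_{(P,c)}\) is positive and strictly increasing on \([h_1,+\infty)\) (i.e.\ the weight is positive and nondegenerate), which you need for the strict inequality \(1/Q_{(P,c)}(t_1) > 1/Q_{(P,c)}(t_2)\) in the direction (1) \(\Rightarrow\) (2).
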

\begin{proof}
It follows from straightforward volume addition and the fact that
the density of a tube is constant.
\end{proof}

By Theorem \ref{thm:monotonicity-warped}, we are mostly interested in the region of
\( M \) where \( U \) is signed. By switching the sign of \( h \), we can assume that
\( U\geq 0 \). When one applies Lemma \ref{lem:less-than-tube} to minimal submanifold, the assumption that \( \Sigma \) belongs to the
region \( h_0\leq h\leq h_1 \) becomes automatic. This is because
\( h\leq h_1 \) on \( \partial\Sigma \) and its restriction to \( \Sigma \) is
superharmonic \( \Delta h = kU \geq 0 \). Also, there is no
critical point of \( h \) in the region \( U > 0 \) because the function \( V(h) = |dh|^2 \) satisfies
\( V' = 2U>0 \).

Let \( (P_1,c_1), (P_2,c_2) \) be 2 positive weights, that is \( P_i\geq  0 \). The tube volumes, defined in
\eqref{eq:weight-tube}, will be denoted by \(Q_1(t), Q_2(t)\).

\begin{definition}
We say that \((P_1, c_1)\) is \emph{weaker} than \((P_2,c_2)\) and write \( (P_1,c_1) \ll
(P_2,c_2) \) if \(\frac{P_1}{Q_1}\leq
\frac{P_2}{Q_2}\) for all \( t \), Equivalently, this means if \(\frac{d}{dt}\frac{Q_1}{Q_2}\leq 0\), i.e. the
\((P_2,c_2)\)-volume of a \(k\)-dimensional tube increases faster than its \((P_1,c_1)\)-volume.
\end{definition}

\begin{lemma}[Comparison]
\label{lem:comparison}
Let \(\Sigma\subset M^n\) be a \( k \)-submanifold not necessarily minimal and
\((P_1,c_1), (P_2,c_2)\) be two positive weights. Let \(\Theta_1,\Theta_2\) be the corresponding densities. 
\begin{enumerate}
\item Suppose that \((P_1,c_1)\ll (P_2,c_2)\) and that \(\Theta_2(\Sigma)\) is
  increasing. Then \( \Theta_1 \) is also increasing and we have \(\Theta_1\leq\Theta_2\).
\item On the other hand, if \((P_2, c_2) \ll (P_1, c_1)\) and \(\Theta_2(\Sigma)\) is
  increasing. Then we still have \(\Theta_1\geq\Theta_2\).
\end{enumerate}
\end{lemma}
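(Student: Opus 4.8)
The plan is to reduce both parts to a single first–order differential inequality for the difference $D:=\Theta_1-\Theta_2$ of densities, driven by one \emph{weight-independent} quantity. Throughout I write $A_i:=A_{(P_i,c_i)}(\Sigma)$, $Q_i:=Q_{(P_i,c_i)}$, $\Theta_i:=A_i/Q_i$, and set $\alpha_i:=Q_i'/Q_i=(\log Q_i)'$; since the $Q_i>0$ for positive weights, the relation $(P_1,c_1)\ll(P_2,c_2)$ is exactly $\alpha_1\le\alpha_2$. The crucial observation, which I would isolate first, is a \emph{slice identity}: differentiating $A_i$ by the coarea formula gives $A_i'(t)=P_i(t)\int_{\Sigma,h=t}|\nabla^\Sigma h|^{-1}$, while differentiating $Q_i$ in \eqref{eq:weight-tube} gives $Q_i'(t)=\omega_{k-1}P_i(t)V(t)^{k/2-1}$. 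Forming the ratio cancels the factor $P_i(t)$, so there is a function $R(t)$ independent of the weight with $A_i'=R\,Q_i'$ for $i=1,2$, namely
\[
  R(t)=\frac{1}{\omega_{k-1}V(t)^{k/2-1}}\int_{\Sigma,h=t}\frac{1}{|\nabla^\Sigma h|}.
\]
Equivalently, $\Theta_i(t)$ is the $dQ_i$-weighted average of the common profile $R$ over $[h_0,t]$, and $\ll$ says that $Q_1$ front-loads this average toward the smaller values of $R$. I emphasize that no minimality of $\Sigma$ enters here — the only geometric input is the hypothesis that $\Theta_2$ increases.

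From $A_i'=R\,Q_i'$ one gets immediately $\Theta_i'=(A_i'Q_i-A_iQ_i')/Q_i^2=\alpha_i(R-\Theta_i)$. Because $\alpha_i\ge 0$ for positive weights, this shows that $\Theta_2$ increasing is equivalent to $R\ge\Theta_2$ wherever $Q_2'>0$. Next I would record that the two densities share the same initial value: at $t=h_0$ only the boundary terms of \eqref{eq:p-c-weighted} and \eqref{eq:weight-tube} survive, and their quotient is $|\gamma_0^{T\Sigma}|/\omega_{k-1}$ for \emph{both} weights, so $D(h_0)=0$.

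The computation then closes by the differential inequality. Substituting $\Theta_i'=\alpha_i(R-\Theta_i)$ and regrouping yields
\[
  D'+\alpha_1 D=(\alpha_1-\alpha_2)(R-\Theta_2),
\]
so, using $\alpha_1=(\log Q_1)'$ as integrating factor, $(Q_1D)'=Q_1(\alpha_1-\alpha_2)(R-\Theta_2)$. In case (1), $\alpha_1\le\alpha_2$ together with $R\ge\Theta_2$ makes the right-hand side $\le 0$; since $Q_1D$ vanishes at $h_0$ and is non-increasing, $D\le 0$, i.e. $\Theta_1\le\Theta_2$, and then $R-\Theta_1\ge R-\Theta_2\ge 0$ gives $\Theta_1'=\alpha_1(R-\Theta_1)\ge 0$, so $\Theta_1$ increases. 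In case (2) the inequality $\alpha_2\le\alpha_1$ flips the sign, $Q_1D$ is non-decreasing from $0$, and hence $D\ge 0$, i.e. $\Theta_1\ge\Theta_2$ (with no monotonicity of $\Theta_1$ claimed, as expected).

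The only point needing genuine care — and the step I expect to be the main obstacle — is the initial condition when $h_0$ is a critical value of $h$: then $V(h_0)=0$, the boundary terms in \eqref{eq:p-c-weighted} and \eqref{eq:weight-tube} vanish, and $\Theta_i(h_0)$ is an indeterminate $0/0$ rather than a value. There I would replace the equality $D(h_0)=0$ by the limit $t\to h_0^+$, at which both densities tend to $\lim_{s\to h_0^+}R(s)$ by l'Hôpital, so that $Q_1D\to 0$ and the monotone-quantity argument still starts from $0$. For the weights occurring in the applications $P_1,P_2>0$ throughout the region $U>0$, so $\alpha_i>0$, $Q_i>0$, $V>0$, and $R\ge\Theta_2$ holds pointwise; the argument for $(Q_1D)'$ then applies verbatim, with only isolated zeros of the $P_i$ to be absorbed by continuity.
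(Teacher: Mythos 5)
Your proof is correct and is essentially the paper's own argument: the coarea-formula cancellation of the weight, the integrating-factor identity for $\bigl(Q_1(\Theta_1-\Theta_2)\bigr)'$ (your $(Q_1D)'=Q_1(\alpha_1-\alpha_2)(R-\Theta_2)$ is the paper's equation \eqref{eq:comparison-2} multiplied by $P_1$), the vanishing initial condition, and the feedback step giving monotonicity of $\Theta_1$ in case (1). Your logarithmic-derivative notation and the explicit treatment of the degenerate starting level are only cosmetic refinements of the same proof.
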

\begin{proof}
One has \(P_1^{-1} \frac{dA_{(P_1,c_1)}}{dt} = P_2^{-1}\frac{dA_{(P_2,c_2)}}{dt}\) from coarea
formula. Therefore
\begin{equation}
\label{eq:comparison-1}
\frac{Q_1}{P_1}\frac{d\Theta_1}{dt} + \omega_{k-1}V^{\frac{k}{2}-1}\Theta_1 = \frac{Q_2}{P_2}\frac{d\Theta_2}{dt} + \omega_{k-1}V^{\frac{k}{2}-1}\Theta_2
\end{equation}
which can be rearranged into
\begin{equation}
\label{eq:comparison-2}
P_1^{-1} \frac{d}{dt}\left( Q_1(\Theta_1 -\Theta_2) \right) = \left(\frac{Q_2}{P_2}-\frac{Q_1}{P_1}\right) \frac{d\Theta_2}{dt}
\end{equation}
For the second part of the Lemma, it follows from the hypothesis  that the RHS of
\eqref{eq:comparison-2} is positive, and therefore \(Q_1(
\Theta_1- \Theta_2)\) is an increasing function. The latter vanishes at \(t=0\), which implies \(\Theta_1\geq\Theta_2\)
for all time. For the first part, the RHS of \eqref{eq:comparison-2} is negative by hypothesis and
 therefore \(\Theta_2 \geq \Theta_1\). The rest of the conclusion follows by substituting
 this into \eqref{eq:comparison-1}.
\end{proof}

The previous proof can also be adapted for rectifiable varifolds and currents. It suffices
to interpret equation \eqref{eq:comparison-1} in weak sense and choose a suitable test function.

It follows from Theorem \ref{thm:monotonicity-warped}, Lemma \ref{lem:comparison} and Lemma
\ref{lem:less-than-tube} that:
\begin{corollary}[]
Let \( \Sigma \) be a minimal \( k \)-submanifold whose boundary is a union of its
intersection with \( h = h_0\)) and a \( (k-1) \)-submanifold \( \gamma \) of the level set \( h=h_1
\). Then for any weight \( (P,c) \) weaker than \( (U, 1) \), we have
\[
A_{(P,c)}(\Sigma)\leq A_{(P,c)}(T_\gamma(h_0, h_1)).
\]
\end{corollary}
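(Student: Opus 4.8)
The plan is to deduce the inequality by transporting the naturally weighted monotonicity of the \emph{tube extension} of $\Sigma$ across to the weaker weight $(P,c)$, and then to reinterpret the resulting monotonicity as a volume comparison via Lemma \ref{lem:less-than-tube}. Throughout I normalise the sign of $h$ so that $U\geq 0$, as explained in the discussion following Lemma \ref{lem:less-than-tube}; in particular there is no critical point of $h$ in the region $U>0$. The first thing to check is that the hypotheses of Lemma \ref{lem:less-than-tube} are met, i.e.\ that $\Sigma$ lies in the slab $h_0\leq h\leq h_1$. The lower bound $h\geq h_0$ is the standing assumption. For the upper bound, Lemma \ref{lem:chain-rule} gives $\Delta_\Sigma h = kU\geq 0$, so $h$ restricted to $\Sigma$ is subharmonic; since $\partial\Sigma\subset\{h=h_0\}\cup\{h=h_1\}$ forces $h\leq h_1$ on the boundary, the maximum principle yields $h\leq h_1$ on all of $\Sigma$.

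First I would form the tube extension $\tilde\Sigma$ of $\Sigma$, obtained by attaching the forward $h$-tube built on $\gamma=\partial\Sigma\cap\{h=h_1\}$. By the final assertion of Theorem \ref{thm:monotonicity-warped}, which explicitly covers tube extensions of minimal submanifolds, the naturally weighted density $\Theta_{(U,1)}(\tilde\Sigma)(t)$ is increasing throughout the region $U>0$, and in particular on $[h_1,+\infty)$ after the sign normalisation.

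Next I would invoke the Comparison Lemma \ref{lem:comparison}(1) with the two positive weights $(P,c)$ and $(U,1)$. The hypothesis that $(P,c)$ is weaker than $(U,1)$ is precisely $(P,c)\ll(U,1)$, and we have just established that $\Theta_{(U,1)}(\tilde\Sigma)$ is increasing; the lemma then produces that $\Theta_{(P,c)}(\tilde\Sigma)$ is also increasing. Note that Lemma \ref{lem:comparison} applies to the \emph{non-minimal} submanifold $\tilde\Sigma$ without any trouble, since minimality entered only through Theorem \ref{thm:monotonicity-warped} to secure the monotonicity of the natural density. Finally, for any $t_1<t_2$ in $[h_1,+\infty)$, the monotonicity of $\Theta_{(P,c)}(\tilde\Sigma)$ is exactly statement (1) of Lemma \ref{lem:less-than-tube}, which is equivalent to statement (2), namely $A_{(P,c)}(\Sigma)(h_1)\leq A_{(P,c)}(T_\gamma(h_0,h_1))$. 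This is the claimed inequality.

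The argument is essentially a clean chaining of the three cited results, so there is no deep obstacle. The only point requiring genuine care is the bookkeeping around the sign and the slab condition: one must verify that the monotonicity region $U>0$ of Theorem \ref{thm:monotonicity-warped} contains the whole half-line $[h_1,+\infty)$ on which Lemma \ref{lem:less-than-tube} is applied, and that $\Sigma$ really does sit inside $h_0\leq h\leq h_1$. Both are handled by the maximum-principle argument above, so I expect this bookkeeping, rather than any substantive estimate, to be the part most in need of attention.
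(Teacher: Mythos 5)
Your proposal is correct and follows exactly the route the paper intends: the corollary is stated there without a separate proof, merely as a consequence of chaining Theorem \ref{thm:monotonicity-warped} (for the tube extension), Lemma \ref{lem:comparison}(1), and Lemma \ref{lem:less-than-tube}, which is precisely your argument. Your maximum-principle verification that $\Sigma$ lies in the slab $h_0\leq h\leq h_1$ is also the same bookkeeping the paper records in the discussion following Lemma \ref{lem:less-than-tube}.
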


We will now give a few examples to which Lemma \ref{lem:comparison} applies.
First, for any positive \( P \), \( (P, c_1) \ll (P, c_2) \) if and only if \( c_1 > c_2
\).
A less obvious example is in the unit ball of \(\mathbb{R}^n\) equipped with the Poincaré metric \(g_{\mathbb{H}}=
   \frac{4}{(1-r^2)^2}g_E\). We choose \( h = \xi \) the time coordinate minimised at the
   origin \( O \), given by \(\xi := \frac{1+r^2}{1-r^2}\) with starting level \( h_0=1
   \). Since \( h_0 \) is a critical value of \( \xi \), we can forget the constant \( c
   \) in the data of a weight. We will define and compare 5
   different weights \( P_i,i=1,\dots,5 \), starting with the natural weight \( P_1 =\xi
   \) and the uniform weight \( P_2=1 \).

   The Euclidean \( k \)-volume corresponds to
   weight \(P_3 = (1+\xi)^{-k}\).
   The sphere \( S^n \) is the 1-point compactification of \(
   \mathbb{R}^n \) and the round metric \( g_S \) induces a \( k \)-volume functional, which
   under \( g_{\mathbb{H}} \) corresponds to a weight \(P_4=\xi^{-k}\).
   On this sphere, let \( x \) be the Euclidean coordinate (see Example
   \ref{ex:xi-S}) that is maximised at \( O \). The weighted \(g_S\)-volume corresponds to \(P_5 = \xi^{-k-1}\).
   It can be checked that \(P_{i+1}\) is weaker than \(P_i\).

   By Lemma \ref{lem:comparison}, we have the following chain of monotonicity:
\begin{equation}
\label{eq:chain}
\text{time-weighted }  g_{\mathbb{H}^n} \gg   \text{unweighted } g_{\mathbb{H}^n}
\gg g_E  \gg \text{unweighted } g_{{S}^n}   \gg \text{weighted } g_{{S}^n} 
\end{equation}
where any submanifold \(\Sigma\subset \mathbb{B}^n\) having increasing density with one volume
functional in the chain will automatically have increasing density in any other volume functional
following it and that the densities can be compared accordingly. More generally:

\begin{lemma}[]
\label{lem:compensated-comparison}
\begin{enumerate}
\item Let \( M=\mathbb{H}^n \), \( h \) be a Minkowskian coordinate. Then natural weight is stronger
  than the weight  \((1, h_0^{-1}) \),
\item Let \( M =  S^n \) and \( h = 1-x \) where \( x \) is a Euclidean coordinate. Then natural weight is weaker than
  the weight \( (1, (1-h_0)^{-1}) \).  
\end{enumerate}
\end{lemma}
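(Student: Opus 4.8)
The plan is to treat both parts uniformly, since each compares the natural weight \( (U,1) \) with a uniform weight \( (1,c) \), and the distinguished constants \( h_0^{-1} \) and \( (1-h_0)^{-1} \) will emerge as exactly the value of \( c \) that zeroes out a boundary term. By the definition of \( \ll \), comparing \( (1,c) \) with \( (U,1) \) amounts to the sign of \( \frac{d}{dt}\bigl(Q_{(1,c)}/Q_{(U,1)}\bigr) \) (for part (2) one reads off the reciprocal quotient, whose derivative carries the opposite sign). Differentiating \eqref{eq:weight-tube} with \( P=1 \), respectively \( P=U \), gives
\[
Q_{(1,c)}'(t) = \omega_{k-1}V^{\frac{k}{2}-1}(t), \qquad Q_{(U,1)}'(t) = \omega_{k-1}\,U(t)\,V^{\frac{k}{2}-1}(t),
\]
so the numerator in the quotient rule for \( Q_{(1,c)}/Q_{(U,1)} \) equals \( -\omega_{k-1}V^{\frac{k}{2}-1}\Phi(t) \), where
\[
\Phi(t) := U(t)\,Q_{(1,c)}(t) - Q_{(U,1)}(t).
\]
Hence part (1), the assertion \( (1,h_0^{-1}) \ll (U,1) \), reduces to \( \Phi\ge 0 \), while part (2), the assertion \( (U,1)\ll (1,(1-h_0)^{-1}) \), reduces to \( \Phi\le 0 \), both on the region \( U>0 \) and for \( t\ge h_0 \).

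The step I expect to carry the argument is the differentiation of \( \Phi \), exploiting the standing hypothesis \( U=\tfrac12 V' \). Because \( Q_{(U,1)}'=\omega_{k-1}U V^{\frac{k}{2}-1}=U\,Q_{(1,c)}' \), the two interior terms cancel and one is left with
\[
\Phi'(t) = U'(t)\,Q_{(1,c)}(t).
\]
Throughout the region \( U>0 \) there are no critical points of \( h \), so \( V>0 \); together with \( c>0 \) (which holds since \( h_0>0 \) in the hyperbolic case and \( 0\le h_0<1 \) in the spherical case) this makes \( Q_{(1,c)}(t)\ge 0 \) for all \( t\ge h_0 \). Therefore the sign of \( \Phi' \) is precisely the sign of \( U' \).

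It remains to pin down the boundary value. At \( t=h_0 \) the integral in \( Q_{(1,c)} \) vanishes, so \( Q_{(1,c)}(h_0)=\omega_{k-1}\frac{c}{k}V(h_0)^{k/2} \) and \( Q_{(U,1)}(h_0)=\frac{\omega_{k-1}}{k}V(h_0)^{k/2} \), whence
\[
\Phi(h_0) = \frac{\omega_{k-1}}{k}V(h_0)^{k/2}\bigl(c\,U(h_0)-1\bigr).
\]
This vanishes exactly when \( c=U(h_0)^{-1} \), which is \( h_0^{-1} \) for a Minkowskian coordinate (where \( U=h \), Example \ref{ex:xi-H}) and \( (1-h_0)^{-1} \) for \( h=1-x \) on \( S^n \) (where \( U=1-h \), Example \ref{ex:xi-S}). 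With \( \Phi(h_0)=0 \) secured, the conclusions follow at once: in the hyperbolic case \( U'=1 \), so \( \Phi'=Q_{(1,c)}\ge 0 \) and \( \Phi \) increases from \( 0 \), giving \( \Phi\ge 0 \); in the spherical case \( U'=-1 \), so \( \Phi'=-Q_{(1,c)}\le 0 \) and \( \Phi \) decreases from \( 0 \), giving \( \Phi\le 0 \). The main obstacle is not analytic but bookkeeping: correctly translating "stronger" and "weaker" into the required sign of \( \Phi \), and confirming that \( Q_{(1,c)} \) is nonnegative on the relevant region. Once the cancellation \( \Phi'=U'Q_{(1,c)} \) is observed, the remainder is a one-line application of the fundamental theorem of calculus.
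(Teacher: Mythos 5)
Your proof is correct and takes essentially the same approach as the paper's (which is only sketched there): establish equality of the relevant quantities at \(t=h_0\) via the choice \(c=U(h_0)^{-1}\), then compare derivatives using the cancellation coming from \(U=\tfrac{1}{2}V'\). Your function \(\Phi = U\,Q_{(1,c)} - Q_{(U,1)}\) is just \(U\) times the difference of the quotients \(Q_i/P_i\) that the paper compares, so the two arguments agree up to a positive factor; yours simply carries out in full the computation the paper leaves to the reader.
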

\begin{proof}
Both statements are inequalities on explicit functions of 1 real variable: denote the
weights by \( (P_1,c_1),(P_2,c_2) \) and tube volumes by \( Q_1,Q_2 \), we want to check
that \( \frac{Q_1}{P_1}\leq \frac{Q_2}{P_2} \). In both cases, it suffices to prove that \(
\frac{d}{dt}\frac{Q_1}{P_1}\leq \frac{d}{dt}\frac{Q_2}{P_2} \) and that \(
\frac{Q_1}{P_1}= \frac{Q_2}{P_2} \) at \( h_0 \). 
\end{proof}

Theorems \ref{thm:intro-time}, \ref{thm:intro-space}, \ref{thm:intro-null}
in the introduction follow from Theorem \ref{thm:monotonicity-warped}, Lemma \ref{lem:comparison} and Lemma \ref{lem:compensated-comparison}.

\section{Applications to minimal surfaces in \(\mathbb{H}^n\) and \( S^n \)}
\label{sec:orga09937d}\subsection{Volume estimates}

The first application of Theorem \ref{thm:monotonicity-warped} is to estimate the
intersection between a minimal \( k \)-submanifold and level sets of \( h \).

\begin{corollary}
\label{cor:est-curve}
Suppose, in addition to the hypothesis of Theorem \ref{thm:monotonicity-warped}, that \(
U\geq 0 \) in the interval \( [h_0, t] \).
Let \( \gamma_t \), \( \gamma_0 \) be the intersection of \( \Sigma \) and the level sets \( h=t \), \( h=h_0 \) and \(
|\gamma_t|, |\gamma_0^{T\Sigma}| \) be their volume and parallel volume under the metric
\( \tilde g \). Then \(|\gamma_t| \geq |\gamma_0^{T\Sigma}|.\)
\end{corollary}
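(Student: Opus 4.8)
The plan is to identify the naturally weighted density \(\Theta_{(U,1)}\) with the \(\tilde g\)-parallel volume of the slice, and then to invoke the monotonicity of Theorem \ref{thm:monotonicity-warped}. Starting from the boundary-integral form \eqref{eq:stokes} of the naturally weighted volume, namely \(kA_{(U,1)}(\Sigma)(t) = \int_{\Sigma, h=t}|\nabla^\Sigma h|\), I would rewrite the right-hand side as an integral over the slice \(\gamma_t = \Sigma\cap\{h=t\}\). Using that \(V\) is constant equal to \(V(t)\) on this level set, that \(|\nabla^\Sigma h| = V^{1/2}\cos\angle(\nabla h, T\Sigma)\), and that the \(g\)- and \(\tilde g\)-volume forms on \(\gamma_t\) are related by \(\vol_g^{k-1} = V^{(k-1)/2}\vol_{\tilde g}^{k-1}\), the slice integral collapses to \(V(t)^{k/2}|\gamma_t^{T\Sigma}|\), where \(|\gamma_t^{T\Sigma}|\) is the parallel volume of \(\gamma_t\). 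Since \(Q_{(U,1)}(t) = \tfrac{\omega_{k-1}}{k}V(t)^{k/2}\), this yields the clean identity \(\Theta_{(U,1)}(t) = \omega_{k-1}^{-1}|\gamma_t^{T\Sigma}|\), valid for all \(t\geq h_0\); at \(t=h_0\) the boundary term in \eqref{eq:p-c-weighted} reproduces exactly \(|\gamma_0^{T\Sigma}|\).

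With this identity in hand, the monotonicity does the rest. Because \(U\geq 0\) on \([h_0,t]\), Theorem \ref{thm:monotonicity-warped} makes \(\Theta_{(U,1)}\) non-decreasing there, so \(\Theta_{(U,1)}(t)\geq \Theta_{(U,1)}(h_0)\), which after clearing the constant \(\omega_{k-1}\) reads \(|\gamma_t^{T\Sigma}|\geq |\gamma_0^{T\Sigma}|\). Finally, since \(0\leq \cos\angle(\nabla h, T\Sigma)\leq 1\), the full \(\tilde g\)-volume dominates the parallel volume of the same slice, \(|\gamma_t|\geq |\gamma_t^{T\Sigma}|\). Chaining the two inequalities gives \(|\gamma_t|\geq |\gamma_t^{T\Sigma}|\geq |\gamma_0^{T\Sigma}|\), which is the claim.

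I expect the only real content to be the first step: recognising that the boundary integral \(\int_{\Sigma,h=t}|\nabla^\Sigma h|\) is exactly \(V(t)^{k/2}\) times the parallel volume of the slice. This is a direct bookkeeping of the conformal factor between \(g\) and \(\tilde g\) together with the angle formula, but it is precisely what converts the monotonicity of a weighted \emph{volume} into a statement about cross-sectional \emph{sizes}. The passage through points where \(U=0\) when applying Theorem \ref{thm:monotonicity-warped} is handled by a limiting argument, since \(\Theta_{(U,1)}\) is continuous and non-decreasing on each subinterval where \(U>0\); and comparing \(|\gamma_t|\) with \(|\gamma_t^{T\Sigma}|\) is immediate from \(\cos\angle\leq 1\). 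Neither of these poses a genuine obstacle.
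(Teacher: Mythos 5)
Your proof is correct, and its skeleton is the same as the paper's: evaluate the naturally weighted density at the two endpoints and let Theorem \ref{thm:monotonicity-warped} bridge them. The genuine difference is at the upper endpoint. The paper invokes Lemma \ref{lem:less-than-tube} (applied to the tube extension built on \( \gamma_t \), whose monotonicity Theorem \ref{thm:monotonicity-warped} guarantees) to get the one-sided bound \( \Theta_{(U,1)}(t)\leq \omega_{k-1}^{-1}|\gamma_t| \), whereas you re-derive this directly from the Stokes identity \eqref{eq:stokes}: unpacking \( |\nabla^\Sigma h| = V^{1/2}\cos\angle(\nabla h, T\Sigma) \) and \( \vol_g^{k-1} = V^{(k-1)/2}\vol_{\tilde g}^{k-1} \) gives the exact identity \( \Theta_{(U,1)}(t) = \omega_{k-1}^{-1}|\gamma_t^{T\Sigma}| \), after which you only discard the factor \( \cos\angle \leq 1 \). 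This buys something the paper's two-line proof does not record: the parallel volumes themselves are monotone, \( |\gamma_t^{T\Sigma}| \geq |\gamma_0^{T\Sigma}| \), which is slightly stronger than the stated conclusion, and your argument is self-contained, bypassing the tube-extension machinery entirely (at the cost of redoing, explicitly, the computation hidden inside Lemma \ref{lem:less-than-tube}). One minor point: your caution about crossing zeros of \( U \) is unnecessary. In the proof of Theorem \ref{thm:monotonicity-warped}, both \( \frac{d}{dt}A_{(U,1)} \) and \( \frac{d}{dt}Q_{(U,1)} \) carry an explicit factor of \( U \), so \( \frac{d}{dt}\Theta_{(U,1)} \geq 0 \) holds pointwise wherever \( U\geq 0 \); no limiting argument is needed.
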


\begin{proof}
By Lemma \ref{lem:less-than-tube}, the naturally weighted density at \( t \) is less
than \( \omega_{k-1}^{-1}|\gamma_t| \), but it is \(\omega_{k-1}^{-1} |\gamma_0^{T\Sigma}| \) at \( h_0 \).
\end{proof}

When the level set \( h=h_0 \) is a critical and degenerate to a point, as in the case of
time coordinates of \( \mathbb{H}^n \) and Euclidean coordinates of \( S^n \),
Corollary \ref{cor:est-curve} becomes:

\begin{corollary}[]
\label{cor:est-curve-degen}
Let \( p \) be a point on a minimal \( k \)-submanifold \( \Sigma \) of \( \mathbb{H}^n \)
(respectively \( S^n \)) and \( \gamma \) be the intersection of \( \Sigma \) and a
geodesic sphere \( S \) of radius \( r \in (0,+\infty) \) (respectively \(r\in (0, \pi/2]
\)) centred at \( p \). Then the \( (k-1) \)-volume of \( \gamma \) is at least that of a
great \( (k-1) \)-subsphere of \( S \).

In particular, a complete minimal submanifold of \( \mathbb{H}^n \) is contained in the
visual hull of its ideal boundary.
\end{corollary}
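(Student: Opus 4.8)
The plan is to specialise Corollary \ref{cor:est-curve} to the radial function centred at \( p \) and to take its degenerate starting level seriously as a limit. In \( \mathbb{H}^n \) I would take \( h=\xi_0=\cosh d(p,\cdot) \), the time coordinate of Example \ref{ex:xi-H} minimised at \( p \); here \( U=\xi_0>0 \) everywhere, \( V=\xi_0^2-1=\sinh^2 d \), the level sets \( \{h=\cosh r\} \) are the geodesic spheres \( S_r(p) \), and \( p \) is the unique critical point. In \( S^n \) I would take \( h=1-x=1-\cos d(p,\cdot) \) from Example \ref{ex:xi-S}, so that \( U=1-h=\cos d \) and \( V=\sin^2 d \); now \( U\geq 0 \) exactly when \( d\leq \pi/2 \), which is the source of the restriction \( r\in(0,\pi/2] \). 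In both cases the normalised metric \( \tilde g \) is the unit round metric on \( S^{n-1} \), and on a fixed sphere \( S_r \) it is the constant rescaling \( \tilde g = V(r)^{-1}g|_{S_r} \).

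The first real step is to make sense of the parallel volume \( |\gamma_0^{T\Sigma}| \) of the degenerate level set \( \{p\} \). I would apply Corollary \ref{cor:est-curve} with a genuine non-critical starting level \( h_0 \) corresponding to a small sphere \( S_\epsilon(p) \), obtaining \( |\gamma_r|_{\tilde g}\geq |\gamma_\epsilon^{T\Sigma}|_{\tilde g} \), and then let \( \epsilon\to 0 \). As \( \epsilon\to 0 \) the submanifold \( \Sigma \) is approximated to first order by its tangent \( k \)-plane \( T_p\Sigma \), whose intersection with \( S_\epsilon(p) \) is a great \( (k-1) \)-subsphere (the reflection fixing \( \exp_p(T_p\Sigma) \) preserves \( S_\epsilon(p) \), forcing this intersection, as its fixed-point set, to be totally geodesic in it). Along this subsphere the radial direction \( \nabla h \) lies in \( T_p\Sigma \), so \( \cos\angle(\nabla h, T\Sigma)\to 1 \) and the parallel volume \( |\gamma_\epsilon^{T\Sigma}|_{\tilde g} \) converges to the plain \( \tilde g \)-volume of a great \( (k-1) \)-subsphere. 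Since \( \tilde g \) is the unit round metric, this limit is exactly \( \omega_{k-1} \), and therefore \( |\gamma_r|_{\tilde g}\geq \omega_{k-1} \) for every \( r \) in the allowed range.

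It then remains to translate this \( \tilde g \)-statement into the geodesic volumes on \( S_r \) asked for in the corollary. Because \( \tilde g=V(r)^{-1}g|_{S_r} \) is a uniform rescaling of the (round) induced metric, multiplying through by \( V(r)^{(k-1)/2} \) gives \( |\gamma_r|_{g|_{S_r}}\geq V(r)^{(k-1)/2}\omega_{k-1} \); and the right-hand side is precisely the induced \( (k-1) \)-volume of a great subsphere of \( S_r \) (for instance, in \( \mathbb{H}^n \), a round \( (k-1) \)-sphere of radius \( \sinh r \), of volume \( \omega_{k-1}(\sinh r)^{k-1} \)). This is the asserted inequality. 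The main obstacle here is the degenerate limit: one must check that the first-order approximation of \( \Sigma \) by \( T_p\Sigma \) is strong enough to pass both the volume and the angle factor \( \cos\angle(\nabla h,T\Sigma) \) to the limit simultaneously---equivalently, that the naturally weighted density tends to \( 1 \) at the centre, which is the analogue of the Euclidean fact that a minimal submanifold has density \( 1 \) at each of its points.

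Finally, for the visual hull statement I would instead send \( r\to+\infty \) in \( \mathbb{H}^n \). The inequality \( |\gamma_r|_{\tilde g}\geq\omega_{k-1} \) persists for all \( r \), and as \( r\to\infty \) the sphere \( S_r(p) \) limits onto \( S_\infty \) with \( \tilde g \) limiting to the round metric \( g_O \) (Proposition \ref{prop:cheeger-colding}), so \( \gamma_r=\Sigma\cap S_r(p) \) limits onto the ideal boundary \( \gamma \) and \( |\gamma_r|_{\tilde g}\to |\gamma|_{g_O}=V_\gamma(p) \). Hence \( V_\gamma(p)\geq\omega_{k-1} \) for every \( p\in\Sigma \), which by definition places \( \Sigma \) inside the visual hull of \( \gamma \). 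Here the technical point to be careful with is the convergence \( |\gamma_r|_{\tilde g}\to|\gamma|_{g_O} \), which uses that \( \Sigma \) is complete with ideal boundary \( \gamma \) so that the slices \( \gamma_r \) converge to \( \gamma \) together with their volumes.
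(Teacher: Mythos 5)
Your proposal is correct and follows essentially the same route as the paper: it specialises Corollary \ref{cor:est-curve} to the time coordinate of \( \mathbb{H}^n \) (respectively the Euclidean coordinate of \( S^n \)), interpreting the degenerate starting level via the fact that the naturally weighted density at the centre point is at least \( 1 \), and then rescales by \( V(r)^{(k-1)/2} \) to get the great-subsphere bound. Your \( \epsilon\to 0 \) regularisation of the critical level and the \( r\to+\infty \) limit identifying \( \lim |\gamma_r|_{\tilde g} \) with \( V_\gamma(p) \) for the visual hull statement simply make explicit the two steps the paper leaves implicit.
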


Theorem \ref{thm:monotonicity-warped} can also be used to upper bound the volume of
minimal submanifolds of \( \mathbb{H}^n \) by that of the tube competitors. The following
estimates come from combining Lemma \ref{lem:less-than-tube} and Theorems
\ref{thm:intro-time}, \ref{thm:intro-space}, \ref{thm:intro-null}. Because these
monotonicity theorems hold for tube extensions, the boundary \( \gamma \) does not need to
lie in a level set of the Minkowskian coordinate.

\begin{corollary}[]
  \label{cor:est-vol}
Let \( \xi \) be a Minkowskian coordinate, \( \Sigma \) a minimal \( k \)-submanifold of \( \mathbb{H}^n \) with boundary a \(
(k-1) \)-submanifold \( \gamma \) and \( T_\gamma \) its \( \xi \)-tube. Suppose that \(
\Sigma \) lies in a region \( \xi\geq a \) with \( a>0 \). Then the volume of \( \Sigma \) is at most
the \( (a, (1, a^{-1})) \)-volume of \( T_\gamma \), which is  \(|\gamma|_{\tilde g} Q_\delta(a,t)\) (see \eqref{eq:Qdelta}).
\end{corollary}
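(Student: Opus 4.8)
The plan is to recognise the unweighted $k$-volume as a weighted volume for a single well-chosen weight, and then to route that weight through the comparison machinery already assembled. I take the weight $(P,c) = (1, a^{-1})$ with starting level $h_0 = a$. Since $P \equiv 1$, the weighted volume $A_{(1,a^{-1})}(\Sigma)$ is, up to the boundary term $\frac{1}{ka}|\gamma_0^{T\Sigma}|V(a)^{k/2}$, nothing but the ordinary $k$-volume of $\Sigma$; in particular $\Sigma$'s volume is bounded above by $A_{(1,a^{-1})}(\Sigma)$. The first thing I would verify is the tube identity: for a Minkowskian coordinate $\xi$ one has $V = |d\xi|^2 = \xi^2 + \delta$ with $\delta \in \{-1, 0, +1\}$ according to its type (Example \ref{ex:xi-H}), so feeding $P \equiv 1$, $c = a^{-1}$, $h_0 = a$ into the tube formula \eqref{eq:weight-tube} reproduces exactly
\[
A_{(1,a^{-1})}(T_\gamma(a,t)) \;=\; |\gamma|_{\tilde g}\, Q_\delta(a,t),
\]
with $Q_\delta$ as in \eqref{eq:Qdelta}; note that the constant $c = a^{-1}$ is precisely what produces the term $\frac{1}{ka}(a^2+\delta)^{k/2}$ inside $Q_\delta$.

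Next I would build the comparison chain. Because $\Sigma \subset \{\xi \geq a\}$ with $a > 0$, the natural weight $U = \xi$ is strictly positive along $\Sigma$ and $V = \xi^2 + \delta$ has no zeros there, so $\xi$ has no critical points in this region. Theorem \ref{thm:monotonicity-warped} then applies and shows that the naturally weighted density $\Theta_{(U,1)}$ is increasing and, crucially, that it remains increasing on the forward tube extension $\tilde\Sigma$ of $\Sigma$ built upon $\gamma = \partial\Sigma$. By Lemma \ref{lem:compensated-comparison}(1) we have $(1, a^{-1}) \ll (U, 1)$, so the Comparison Lemma \ref{lem:comparison}(1), applied to $\tilde\Sigma$, upgrades this to monotonicity of the $(1, a^{-1})$-density $\Theta_{(1, a^{-1})}(\tilde\Sigma)$.

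Finally I would invoke Lemma \ref{lem:less-than-tube} with the weight $(P,c) = (1, a^{-1})$: the monotonicity of $\Theta_{(1,a^{-1})}(\tilde\Sigma)$ is statement~(1), whose equivalent statement~(2) is the volume comparison $A_{(1,a^{-1})}(\Sigma) \leq A_{(1,a^{-1})}(T_\gamma)$. Combining this with the tube identity of the first paragraph yields the asserted bound, the volume of $\Sigma$ appearing on the left.

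The step demanding the most care is the one the corollary was engineered to accommodate: $\gamma$ need not sit inside a level set of $\xi$. This is exactly why the argument passes to $\tilde\Sigma$ rather than to $\Sigma$ directly, since Lemma \ref{lem:less-than-tube} is phrased for a level-set boundary. I would check that the forward tube extension places us squarely in that lemma's hypotheses and that the correction term carried by $c = a^{-1}$ is what reconciles the two sides, so that the plain volume of $\Sigma$ emerges cleanly on the left once $\gamma_0 = \Sigma \cap \{\xi = a\}$ is empty.
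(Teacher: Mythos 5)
Your proposal is correct and follows essentially the same route as the paper: the paper's one-sentence proof is exactly the combination of Theorem \ref{thm:monotonicity-warped} applied to the tube extension, Lemma \ref{lem:compensated-comparison}(1), Lemma \ref{lem:comparison}(1) and Lemma \ref{lem:less-than-tube}, which is the chain you unroll, together with the observation that the $(1,a^{-1})$-volume dominates the unweighted volume and that the tube formula \eqref{eq:weight-tube} yields $|\gamma|_{\tilde g}Q_\delta(a,t)$. The one place you (like the paper) remain schematic is when $\gamma$ does not lie in a level set of $\xi$: Lemma \ref{lem:less-than-tube} does not apply verbatim there, and one must rerun its proof on $\tilde\Sigma$ (constancy of tube density plus volume addition, comparing $\Theta_{(1,a^{-1})}(\tilde\Sigma)(t_1)$ with its limit as $t\to\infty$, which is at most $|\gamma|_{\tilde g}/\omega_{k-1}$) --- but this is the same level of detail as the paper's own remark that the monotonicity for tube extensions makes the level-set hypothesis unnecessary.
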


When \( \Sigma \) is in the region \( \xi\geq a_1 > a_2 \), the volume
estimate obtained by Corollary \ref{cor:est-vol} with \( a = a_1 \) implies the
version with \( a=a_2 \). It is thus optimal to choose \( a =\min_\Sigma \xi \).

\subsection{Renormalised isoperimetric inequalities}
\label{sec:renorm-isop-ineq}

Corollary \ref{cor:est-vol} consists of three different upper bounds for volume of minimal
submanifolds, depending on the type of \( \xi \). The version for time coordinate with
\( a=1 \) was proved by Choe and Gulliver and was essential to their proof of the
isoperimetric inequality for minimal surfaces of \( \mathbb{H}^n \)
\cite{Choe.Gulliver92_SharpIsoperimetricInequality}. Complete surfaces, on the other hand,
necessarily run out to infinity and so have infinite area. By studying the area growth,
Graham and Witten defined finite number, called the renormalised area, that is intrinsic
to the minimal surface. We will briefly review their result and use Corollary
\ref{cor:est-vol} to prove three different renormalised versions of the isoperimetric
inequality.

A \emph{boundary defining function} (bdf) of \(\mathbb{H}^n\) is a non-negative function \(\rho\)
on the compactification \(\overline{\mathbb{H}^n}\) that vanishes exactly on \({S}_{\infty}\) and
exactly to first order. 
Such function is called \emph{special} if \(|d\ln\rho|_{g_{\mathbb{H}^n}} = 1\) on a neighbourhood
of the ideal boundary. It was proved in
\cite{Graham.Witten99_ConformalAnomalySubmanifold} that any complete minimal surface \(\Sigma\) of \( \mathbb{H}^n \) that are
\(C^2\) up to its ideal boundary has to following area expansion under a special 
bdf. Here \( A \) denotes the hyperbolic area and \( \bar g = \rho^2 g \).
\begin{equation}
\label{eq:GW-1}
 A(\Sigma\cap \{\rho \geq \epsilon\}) = \frac{|\gamma|_{\bar g}}{\epsilon} +
\mathcal{A}_R + O(\epsilon).
\end{equation}
The coefficient \(\mathcal{A}_R\) is called the \emph{renormalised
area} of \(\Sigma\) and is independent of the choice of \(\rho\). Clearly, when \( \rho
\) is only \emph{special up to third order}, that
is \( \rho = \bar \rho + o(\bar\rho^2) \) for a special bdf \(
\bar\rho \),  the 2 metrics \( \rho^2 g\) and \(\bar\rho^2g \)
are equal on \( S_\infty \) and the expansion \eqref{eq:GW-1} still holds for \( \rho \).

\begin{lemma}[]
\label{lem:bdfs}
For any Minkowskian coordinate \( \xi \), the bdf \( \rho = |\xi|^{-1} \) is
special up to third order.
\end{lemma}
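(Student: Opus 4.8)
The plan is to exhibit an explicit special bdf $\bar\rho$, built as a function of $\xi$ alone, and to verify that $\rho=|\xi|^{-1}$ agrees with it up to an error of order $\rho^3$. First I would record, from Example \ref{ex:xi-H}, that a Minkowskian coordinate satisfies $V=|d\xi|^2_{g_{\mathbb{H}^n}}=\xi^2+\delta$ with $\delta\in\{-1,0,+1\}$ according to whether $\xi$ is of time, null, or space type. Since $\ln\rho=-\ln|\xi|$, this yields at once
\[
|d\ln\rho|^2_{g_{\mathbb{H}^n}}=\frac{|d\xi|^2}{\xi^2}=\frac{\xi^2+\delta}{\xi^2}=1+\delta\rho^2,
\]
so $\rho$ is already exactly special in the null case ($\delta=0$) and fails specialness only at order $\rho^2$ otherwise.

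Next I would produce a genuine special bdf among functions of $\xi$. Writing a candidate as $\bar\rho=e^{F(\xi)}$, the condition $|d\ln\bar\rho|^2=(F'(\xi))^2V=1$ becomes the ODE $F'(\xi)=-(\xi^2+\delta)^{-1/2}$, the negative branch being forced by $\bar\rho\to0$ at $S_\infty$. Integrating and, crucially, fixing the constant of integration so that $\bar\rho\sim|\xi|^{-1}$ at infinity, I obtain
\[
\bar\rho=\frac{2}{|\xi|+\sqrt{\xi^2+\delta}},
\]
which satisfies $|d\ln\bar\rho|^2=1$ identically wherever $\xi^2+\delta>0$, hence on a neighbourhood of $S_\infty$. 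The overall constant $2$ is precisely what makes the induced boundary metric $\lim\bar\rho^2g_{\mathbb{H}^n}$ coincide with that of $\rho$; since multiplying a special bdf by a constant preserves specialness, this is harmless. For a space coordinate, where $\xi$ changes sign, the two ends $\xi\to\pm\infty$ are swapped by the isometry $\xi\mapsto-\xi$, so it suffices to treat $\xi>0$ and read $|\xi|$ for $\xi$; for time and null coordinates $\xi>0$ throughout.

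Finally I would Taylor expand in the small quantity $\rho=|\xi|^{-1}$. Writing $\bar\rho=2\rho/\bigl(1+\sqrt{1+\delta\rho^2}\bigr)$ and expanding the square root gives $\bar\rho=\rho-\tfrac{\delta}{4}\rho^3+O(\rho^5)$, so that
\[
\rho-\bar\rho=\tfrac{\delta}{4}\rho^3+O(\rho^5)=o(\bar\rho^2),
\]
using $\rho\sim\bar\rho$. This is exactly the assertion that $\rho=\bar\rho+o(\bar\rho^2)$ for the special bdf $\bar\rho$, i.e. that $\rho=|\xi|^{-1}$ is special up to third order; by the remark following \eqref{eq:GW-1}, the Graham--Witten expansion then holds verbatim for $\rho$. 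The one point to watch is the normalisation: without the constant $2$ one would have $\rho/\bar\rho\to2$ and the discrepancy would appear at leading order rather than at order $\rho^3$. Beyond this bookkeeping there is no real obstacle, as the specialness of $\bar\rho$ holds by construction and the three coordinate types are handled uniformly through the single parameter $\delta$.
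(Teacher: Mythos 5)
Your proof is correct and takes essentially the same route as the paper's: both exhibit an explicit special bdf and Taylor-expand $\rho=|\xi|^{-1}$ against it, and your $\bar\rho=2/\bigl(|\xi|+\sqrt{\xi^2+\delta}\bigr)$ is exactly the paper's $\bar\rho=2e^{-l}$, where $l$ is the distance to the defining point or hyperplane (so $|\xi|=\cosh l$ or $\sinh l$; for null $\xi$ the paper notes $\rho$ itself is the half-space coordinate, which is your $\delta=0$ case). The only difference is presentational: you construct $\bar\rho$ by integrating the ODE $(F'(\xi))^2(\xi^2+\delta)=1$ uniformly in $\delta$, whereas the paper invokes $|dl|=1$ for the geometric distance function and then checks the expansion case by case.
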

\begin{proof}
  Clearly when \( \xi \) is of null type, \(\rho = \xi^{-1}\) is the half space coordinate
  and so is special. When \( \xi \) is of time type (respectively space type), let \( l \)
  be the distance function towards the defining interior point (or hyperplane). Clearly
  \( |dl| = 1 \) and so \( \bar \rho = 2\exp(-l) \) is a special bdf. It can be checked
  that \( |\xi| = \cosh l \) (respectively \( \sinh l \)), and so
  \( \rho = \bar\rho + O(\bar\rho^3)\).
\end{proof}

Now we suppose that \( \Sigma \) is a complete minimal surface of \( \mathbb{H}^n \) in the region
\( \xi \geq a > 0 \), the expansion \eqref{eq:GW-1} with \( \rho = |\xi|^{-1} \) can be rewritten as 
\begin{equation}
\label{eq:GW-2}
 A(\Sigma \cap \{\xi < t\})(t) = |\gamma|_{\bar g} t + \mathcal{A}_R + O(t^{-1})
\end{equation}
Here \( \bar g = \xi^{-2}g_{\mathbb{H}} = \tilde g \) is either the round, flat or doubled
hyperbolic metric on \( S_\infty \). 
Corollary \ref{cor:est-vol} gives an upper bound of the LHS of \eqref{eq:GW-2}:
\begin{equation}
\label{eq:est-AR-1}
A(\Sigma\cap\{\xi\leq t\}) \leq |\gamma_t|_{\tilde g}\left( \int_{a}^t dh +
  \frac{1}{2a}V(a) \right) = |\gamma| t - |\gamma| \left( \frac{1}{2}a -
  \frac{\delta}{2a}\right) + O(t^{-1})
\end{equation}
For the equality at the end of \eqref{eq:est-AR-1}, we used the fact that \( |\gamma_t|_{\tilde g} = |\gamma|_{\tilde g} + O(t^{-2}) \), which is because minimal surfaces
meet \( S_\infty \) orthogonally. Now plug \eqref{eq:est-AR-1} into
  \eqref{eq:GW-2}, we have the following estimate of \( \mathcal{A}_R \), which
  specialises to Theorems \ref{thm:upper-AR} and \ref{thm:upper-AR-space} in the introduction:

\begin{theorem}[]
\label{thm:compensated-est-AR}
Let \(\xi\) be a Minkowskian coordinate, \( \tilde g \) be the
corresponding round/ double hyperbolic/ flat metrics on \(S_\infty\) and \(\Sigma\) be a complete minimal surface that is \(C^2\) near its ideal boundary
\(\gamma\). Suppose that \( \Sigma \) lies in the region \( \xi\geq a > 0 \), then
\begin{equation}
\label{eq:compensated-AR}
 \mathcal{A}_R (\Sigma) + \frac{1}{2}|\gamma|_{\tilde g} \left(a - \frac{\delta}{a}\right)
\leq 0.
\end{equation}
where \( \delta = -1,0,+1 \) depending on the type of \( \xi \) as in Example \ref{ex:xi-H}.
\end{theorem}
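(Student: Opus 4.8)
The plan is to read off the inequality \eqref{eq:compensated-AR} by matching the constant terms of two asymptotic expansions of the truncated area \( A(\Sigma\cap\{\xi\leq t\}) \) as \( t\to\infty \): one exact, supplied by Graham--Witten, and one an upper bound, supplied by the tube comparison of Corollary \ref{cor:est-vol}. First I would set up the exact expansion. By Lemma \ref{lem:bdfs} the boundary defining function \( \rho=|\xi|^{-1} \) is special up to third order, so the Graham--Witten expansion \eqref{eq:GW-1} is valid for this choice of \( \rho \). Performing the change of variable \( \epsilon=1/t \), under which \( \{\rho\geq\epsilon\} \) becomes \( \{\xi\leq t\} \) and \( \bar g=\rho^2 g_{\mathbb H} \) restricts on \( S_\infty \) to \( \tilde g \), turns \eqref{eq:GW-1} into \eqref{eq:GW-2}, namely \( A(\Sigma\cap\{\xi\leq t\})=|\gamma|_{\tilde g}\,t+\mathcal A_R+O(t^{-1}) \).

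For the upper bound I would apply Corollary \ref{cor:est-vol} with starting level \( a \): since \( \Sigma \) lies in \( \{\xi\geq a\} \), its area up to level \( t \) is at most \( |\gamma_t|_{\tilde g}\,Q_\delta(a,t) \). Specialising \eqref{eq:Qdelta} to \( k=2 \) and using \( V(a)=a^2+\delta \) from Example \ref{ex:xi-H}, I compute \( Q_\delta(a,t)=(t-a)+\tfrac{1}{2a}(a^2+\delta)=t-\tfrac12\bigl(a-\tfrac{\delta}{a}\bigr) \). The decisive step is then to replace \( |\gamma_t|_{\tilde g} \) by \( |\gamma|_{\tilde g} \) with a controlled error. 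Because a minimal surface meets \( S_\infty \) orthogonally, the slice \( \gamma_t \) approaches \( \gamma \) with vanishing first-order correction, so \( |\gamma_t|_{\tilde g}=|\gamma|_{\tilde g}+O(t^{-2}) \); multiplying by \( Q_\delta(a,t)=t+O(1) \) keeps the \( O(t^{-2})\cdot t=O(t^{-1}) \) error out of the constant term, yielding \eqref{eq:est-AR-1}: \( A(\Sigma\cap\{\xi\leq t\})\leq |\gamma|_{\tilde g}\,t-\tfrac12|\gamma|_{\tilde g}\bigl(a-\tfrac{\delta}{a}\bigr)+O(t^{-1}) \).

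Finally I would subtract the two expansions. The linear-in-\( t \) terms coincide (both equal \( |\gamma|_{\tilde g}\,t \)) and cancel, and letting \( t\to\infty \) in the comparison of the \( O(1) \) terms leaves \( \mathcal A_R\leq -\tfrac12|\gamma|_{\tilde g}\bigl(a-\tfrac{\delta}{a}\bigr) \), which is exactly \eqref{eq:compensated-AR}. The hard part will be the orthogonality estimate \( |\gamma_t|_{\tilde g}=|\gamma|_{\tilde g}+O(t^{-2}) \): if the surface were not asymptotically perpendicular to the conformal boundary, the slice length would carry a \( t^{-1} \) correction whose product with the linear growth of \( Q_\delta \) would contaminate the \( O(1) \) term and wreck the comparison. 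Once both expansions are written to order \( O(1) \), everything else is routine bookkeeping of the error terms.
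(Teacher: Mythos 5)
Your proposal is correct and follows essentially the same route as the paper: it invokes Lemma \ref{lem:bdfs} to justify the Graham--Witten expansion \eqref{eq:GW-2} for \( \rho=|\xi|^{-1} \), bounds the truncated area by the tube volume \( |\gamma_t|_{\tilde g}\,Q_\delta(a,t) \) via Corollary \ref{cor:est-vol}, uses the orthogonal-intersection estimate \( |\gamma_t|_{\tilde g}=|\gamma|_{\tilde g}+O(t^{-2}) \) exactly as in \eqref{eq:est-AR-1}, and compares constant terms as \( t\to\infty \). You also correctly identified the orthogonality estimate as the step that protects the \( O(1) \) term, which is precisely the point the paper flags after \eqref{eq:est-AR-1}.
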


\subsection{Antipodalness of minimal submanifolds of \( S^n \)}
\label{sec:volume-minim-subm}
In \( S^n \), we will use Theorem \ref{thm:monotonicity-warped} with the function \( h =
1-x \) instead of the Euclidean coordinate \( x \)
of \( \mathbb{R}^{n+1} \). Recall that the naturally weighted
monotonicity theorem holds only in one half sphere (\( h\in [0,1] \)) and the natural weight
is weaker than the uniform weight. The
second half of Lemma \ref{lem:comparison} applies and gives an
estimate of the (unweighted) volume of minimal submanifolds. 

Recall that the notion of \( \epsilon \)-antipodal was given in Definition \ref{def:eps-antipodal}.
It is clear that any subset of \( S^n \) is \( \pi \)-antipodal and any closed subset is \( 0 \)-antipodal if and only if it is symmetric by antipodal
map. Closed minimal submanifolds are
\( \frac{\pi}{2} \)-antipodal because they cannot be contained in one half
sphere. This is because for any Euclidean coordinate \( x \),
\begin{equation}
\label{eq:x-weighted-0}
\int_\Sigma x = -\frac{1}{k}\int_\Sigma \Delta x = 0,
\end{equation}
and so \( \Sigma \) cannot be contained in a half sphere where \( x \) is signed. We prove
that the further a minimal submanifold is from being antipodal, i.e. the greater \(
\epsilon \) is, the greater its volume has
to be.

We define \( \epsilon(A,k) \) to be the unique
solution in \( [0,\frac{\pi}{2}) \) of 
\begin{equation}
\label{eq:eps-sphere}
\int_{\epsilon}^{\pi/2}\sin^{k-1}t dt + \frac{(1-\cos^2\epsilon)^{k/2}}{k\cos\epsilon}= \omega_{k-1}^{-1}\left(A - \frac{\omega_k}{2}\right)
\end{equation}
It can be checked that \( \epsilon(\cdot, k) \) is
strictly increasing function on the interval \( [\omega_k,+\infty) \) with \( \epsilon(\omega_k, k) = 0 \) and converges to \( \frac{\pi}{2} \) when \( A\to+\infty
\). When \( k=2 \), \eqref{eq:eps-sphere} simplifies to
\[
\cos\epsilon + \frac{1}{\cos\epsilon} = \frac{A}{\pi}- 2
\]

\begin{proposition}
 \label{prop:eps-sphere}
 A closed minimal \( k \)-submanifold \( \Sigma \) of \( S^n \) with volume \( A \) must be
 \( \epsilon(A,k) \)-antipodal. More generally, if \( \Sigma \) contains a point \( p \)
 with density \( m \), then it cannot avoid the ball of radius
 \( \epsilon(\frac{A}{m},k) \) centred at the antipodal point \( p' \) of \( p \).
\end{proposition}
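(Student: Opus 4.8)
The plan is to reduce the proposition to a single volume estimate, then prove it by cutting $\Sigma$ along the equator of $p$ and handling the two halves with the weighted monotonicity of Theorem~\ref{thm:monotonicity-warped}, centred at $p$ and at $p'$. First I fix the Euclidean coordinate $x$ of $\mathbb{R}^{n+1}$ maximised at $p$, so $x(p)=1$, $x(p')=-1$, and set $h=1-x$ as in Example~\ref{ex:xi-S}, with $U=x$, $V=2h-h^2$; writing $x=\cos\theta$, $\{x\ge0\}$ is the closed hemisphere about $p$ and $\{h=1\}$ its equator. Put $r^*:=\operatorname{dist}(p',\Sigma)$; since $\int_\Sigma x=0$ by~\eqref{eq:x-weighted-0}, $\Sigma$ lies in no open hemisphere, so $r^*\in[0,\tfrac{\pi}{2})$. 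Everything will follow from
\[
A\ \ge\ m\Big(\tfrac{\omega_k}{2}+F(r^*)\Big),\qquad F(s):=\omega_{k-1}\Big(\int_s^{\pi/2}\sin^{k-1}t\,dt+\frac{\sin^k s}{k\cos s}\Big),
\]
because $F$ is strictly increasing ($F'(s)=\omega_{k-1}\sin^{k+1}s/(k\cos^2 s)$) and~\eqref{eq:eps-sphere} defines $\epsilon(A/m,k)$ by $F(\epsilon(A/m,k))=A/m-\omega_k/2$; thus the estimate forces $r^*\le\epsilon(A/m,k)$, the general claim, and then $m\ge1$ with $\epsilon(\cdot,k)$ increasing gives $r^*\le\epsilon(A,k)$, the $\epsilon$-antipodal statement.

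For the half about $p$, I apply Theorem~\ref{thm:monotonicity-warped} to $h$ with base level $h_0=0$, the critical value at $p$. The natural density is $m$ at $p$ and increasing on $[0,1]$; the base level being critical makes the correction $c$ irrelevant, and Lemma~\ref{lem:compensated-comparison}(2) makes the natural weight weaker than the uniform one, so Lemma~\ref{lem:comparison}(2) gives $\Theta_{(1,\cdot)}\ge m$ on $[0,1]$. At $t=1$ one has $Q_{(1,\cdot)}(1)=\omega_{k-1}\int_0^{\pi/2}\sin^{k-1}t\,dt=\omega_k/2$, so $\vol(\Sigma\cap\{x\ge0\})\ge m\,\omega_k/2$.

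For the half about $p'$, I re-centre: put $h'=1+x$, $U'=-x$ (so $U'\ge0$ exactly on $\{x\le0\}$) with base level $h'_0=1-\cos r^*$, the value of $h'$ at the point of $\Sigma$ nearest $p'$. The natural density $\Theta'_{(U',1)}$ increases on $h'\in[h'_0,1]$ and decreases on $h'\in[1,2]$ towards the far pole $p$, and the key point is that its limit at $p$ is again $m$: since $\Sigma$ avoids $B_{r^*}(p')$ we have $\Sigma\cap\{h>1+\cos r^*\}=\varnothing$, hence $\int_{\Sigma,\,h\le1+\cos r^*}x=\int_\Sigma x=0$, and substituting this and reflecting $h'\mapsto2-h'$ identifies $\Theta'_{(U',1)}$ near $p$ with the $p$-centred natural density, whose limit is $m$. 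Thus $\Theta'_{(U',1)}(1)\ge m$, and Lemmas~\ref{lem:compensated-comparison}(2) and~\ref{lem:comparison}(2) compare the natural weight with $(1,(\cos r^*)^{-1})$ to give $\Theta'_{(1,(\cos r^*)^{-1})}(1)\ge m$. Because the base slice is a single point its boundary term drops from the weighted volume, while $Q'_{(1,(\cos r^*)^{-1})}(1)=F(r^*)$, so $\vol(\Sigma\cap\{x\le0\})\ge m\,F(r^*)$; adding the two halves yields the displayed estimate.

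The hard part is this second half: unlike the $p$-side there is no critical base level injecting the density $m$, so the estimate rests on showing that the decreasing natural density centred at $p'$ still limits to $m$ at the antipodal far pole. This is exactly where minimality (via $\int_\Sigma x=0$) together with the hypothesis that $\Sigma$ avoids $B_{r^*}(p')$ cancel the otherwise divergent boundary contribution, and where the correction constant $(\cos r^*)^{-1}$ reproduces the precise term $\omega_{k-1}\sin^k r^*/(k\cos r^*)$ of $F$. The rest is the coarea and Stokes bookkeeping already packaged in Theorem~\ref{thm:monotonicity-warped} and Lemmas~\ref{lem:comparison}, \ref{lem:compensated-comparison}.
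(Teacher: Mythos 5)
Your proof is correct and takes essentially the same route as the paper's: split $\Sigma$ along the equator of $p$, get $\vol(\Sigma\cap\{x\geq 0\})\geq m\,\omega_k/2$ from the $p$-centred natural monotonicity plus Lemmas \ref{lem:compensated-comparison}(2) and \ref{lem:comparison}(2), pin the $p'$-centred natural density at the equator to $\geq m$ via $\int_\Sigma x=0$ (your reflection identity is exactly the paper's ``equal distribution of weighted volume between opposing hemispheres''), convert to the unweighted bound $mF(r^*)$ with the weight $\bigl(1,(\cos r^*)^{-1}\bigr)$, and invert the increasing function $F$. The only cosmetic slip is the phrase ``the base slice is a single point'': the set of points of $\Sigma$ nearest to $p'$ need not be a point, but the boundary term still drops because $\nabla^{\Sigma}h'$ vanishes on that set (it is an interior minimum of $h'|_\Sigma$), so the parallel volume $|\gamma_0^{T\Sigma}|$ is zero.
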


Before proving Proposition \ref{prop:eps-sphere}, we note that 
\eqref{eq:x-weighted-0} can be interpreted as the equal distribution of weighted volume of
\( \Sigma \) between opposing half spheres. More precisely, if \( h_1=
1-x \) and \( h_2 = 1+x \) are warping functions associated to the North pole \( p \) and South pole \( p' \), then
\[
\int_{\Sigma, 0\leq h_1\leq 1} U(h_1) - \int_{\Sigma, 0\leq h_2\leq 1} U(h_2)= \int_{\Sigma}x = 0
\]

Now if \( \Sigma \) has density \( m \) at \( p \), then its naturally weighted volume in the
Northern (and so in the Southern hemisphere) is at least \( m/2 \) times that of a
totally geodesic \( k \)-subsphere, which is
 \( \frac{m}{2}.\frac{\omega_{k-1}}{k} \). Moreover, by Lemma \ref{lem:comparison}, we recover
the following lower bound of unweighted volume, which was first proved by Cheng,
Li and Yau using the heat kernel. 

\begin{corollary}[cf. {\cite[Corollary 2]{Cheng.etal84_HeatEquationsMinimal}}]
Let \(\Sigma\) be a minimal \( k \)-submanifold of \( S^n \) that has no boundary in the geodesic
ball \(B(p,r)\) centred at \(p\) and of radius \(r \leq
\frac{\pi}{2}\). Suppose that \( \Sigma \) contains \( p \) with multiplicity \( m \).
Then the volume of \(\Sigma\cap B(p,r)\) is at least \(m\) times that of 
the \( k \)-ball of radius \(r\):
\[
     A(\Sigma\cap B(O,r)) \geq m \omega_{k-1}\int_{t=0}^{r} \sin^{k-1}(t)dt
   \]
\end{corollary}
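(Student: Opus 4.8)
The plan is to run the naturally weighted monotonicity of Theorem \ref{thm:monotonicity-warped} outward from the point \( p \) and then convert the resulting weighted estimate into an unweighted one via the Comparison Lemma, exactly in the spirit of the discussion preceding Proposition \ref{prop:eps-sphere}. First I would take \( h = 1-x \), where \( x \) is the Euclidean coordinate of Example \ref{ex:xi-S} maximised at \( p \); then \( x = \cos t \) with \( t \) the geodesic distance to \( p \), so that \( h = 1-\cos t \), \( U = 1-h = \cos t \), \( V = 2h-h^2 = \sin^2 t \), and \( B(p,r) \) is the sublevel \( \{h\le 1-\cos r\} \). The starting level \( h_0 = 0 \) is the degenerate critical point \( p \), and since \( U = \cos t \ge 0 \) on \( [0,\tfrac{\pi}{2}] \), the hypothesis \( r\le \tfrac{\pi}{2} \) places us in the region where Theorem \ref{thm:monotonicity-warped} asserts that the naturally weighted density \( \Theta_{(U,1)} \) is increasing.

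Next I would argue that \( \Theta_{(U,1)}(t)\to m \) as \( t\to 0 \). Near \( p \) the metric osculates the Euclidean one and \( h = 1-\cos t = \tfrac12 t^2 + O(t^4) \) agrees to leading order with \( \rho = \tfrac12\,\mathrm{dist}^2 \) of Example \ref{ex:xi-R}, so the weighted monotonicity degenerates to the classical Euclidean monotonicity, whose density at \( p \) is by definition the multiplicity \( m \). Monotonicity then gives \( \Theta_{(U,1)}(1-\cos r)\ge m \). To pass from this weighted bound to the unweighted volume I would invoke Lemma \ref{lem:compensated-comparison}(2): the natural weight \( (U,1) \) is weaker than the uniform weight \( (1,\cdot) \) (equivalently \( k\cos t\int_0^t \sin^{k-1}s\,ds\le \sin^k t \) on \( [0,\tfrac{\pi}{2}] \), which holds since the difference vanishes at \( 0 \) and has nonnegative derivative). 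Applying Lemma \ref{lem:comparison}(2), with the increasing weaker density \( \Theta_{(U,1)} \) playing the role of \( \Theta_2 \), yields \( \Theta_{(1,\cdot)}\ge \Theta_{(U,1)}\ge m \) at level \( 1-\cos r \).

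It remains to identify the uniform tube volume. Since \( h_0=0 \) is a critical value, \( V(h_0)=0 \) kills the \( c \)-term in \eqref{eq:weight-tube}, so with \( P\equiv 1 \) and the substitution \( V=\sin^2 t \), \( dh=\sin t\,dt \) one gets \( Q_{(1,\cdot)}(1-\cos r) = \omega_{k-1}\int_0^{1-\cos r} V^{k/2-1}\,dh = \omega_{k-1}\int_0^r \sin^{k-1}t\,dt \), which is exactly the volume of the \( k \)-ball of radius \( r \). Combining, \( A(\Sigma\cap B(p,r)) = \Theta_{(1,\cdot)}(1-\cos r)\,Q_{(1,\cdot)}(1-\cos r) \ge m\,\omega_{k-1}\int_0^r \sin^{k-1}t\,dt \), which is the claim.

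The main obstacle, and the only place the multiplicity hypothesis genuinely enters, is the identification \( \lim_{t\to 0}\Theta_{(U,1)}(t)=m \): it requires the local reduction to the flat monotonicity formula near the degenerate critical point and, for the varifold version, the usual density comparison. Everything else is bookkeeping within the established framework; I would also take care to check that the comparison is legitimate at the shared critical starting level \( h_0=0 \), where \( Q_{(1,\cdot)}\to 0 \) while \( \Theta_{(U,1)}\to m \) stays finite, so that \( Q_{(1,\cdot)}\big(\Theta_{(1,\cdot)}-\Theta_{(U,1)}\big) \) indeed vanishes there as the proof of Lemma \ref{lem:comparison} requires.
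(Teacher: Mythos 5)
Your proposal is correct and follows essentially the same route as the paper: the naturally weighted monotonicity of Theorem \ref{thm:monotonicity-warped} for \( h=1-x \) started at the density-\( m \) point \( p \), combined with Lemma \ref{lem:compensated-comparison}(2) and the second part of the Comparison Lemma \ref{lem:comparison} to pass from the natural to the uniform weight, and the identification of \( Q_{(1,\cdot)} \) with the geodesic \( k \)-ball volume. Your added verifications (the explicit inequality \( k\cos t\int_0^t\sin^{k-1}s\,ds\le\sin^k t \) and the vanishing of \( Q_1(\Theta_1-\Theta_2) \) at the critical starting level) are details the paper leaves implicit, but they do not change the argument.
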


\begin{proof}[Proof of Proposition \ref{prop:eps-sphere}]
Suppose that \( \Sigma \) is at distance \( \epsilon \) from \( p' \), which means \(
h_2 \geq a := 1 - \cos\epsilon \) on \( \Sigma \). On the Southern hemisphere, \( \Sigma \) is
\( (a, (U, 1)) \)-monotone with density \(\geq m \) at \( h_2 = 1 \). Lemma
\ref{lem:comparison} says that its \( (a,(1, \frac{1}{1-a})) \)-density on this hemisphere is at
least \( m \). So the volume \( A_- \) of \( \Sigma \) in this hemisphere can be bounded by
\[
A_-\geq  m.\omega_{k-1}\left[ \int_{a}^1 V(t)^{k/2 - 1} dt + \frac{1}{k}.\frac{1}{1-a} V(a)^{k/2}\right]
\]
where \( V(t) = 2t - t^2 \). On the Northern hemisphere, the unweighted density is at least
\( m \) and so the volume \( A_+ \) of \( \Sigma \) there is at least \( m
\frac{\omega_{k}}{2} \). Therefore
\[
A = A_+ + A_- \geq m \left[ \frac{\omega_k}{2} + \omega_{k-1}\left( \int_{a}^1(2t -
    t^2)^{k/2-1}dt + \frac{(2a - a^2)^{k/2}}{k(1-a)} \right) \right]
\]
So \( \epsilon \) satisfies
\begin{equation}
\label{eq:A-eps}
\frac{A}{m}\geq \frac{\omega_k}{2} + \omega_{k-1} \left[ \int_{\epsilon}^{\pi/2}\sin^{k-1}t dt + \frac{(1 - \cos^2\epsilon)^{k/2}}{k\cos\epsilon} \right].
\end{equation}
\end{proof}

One sees either from the domain of definition of \( \epsilon(\cdot,k) \) or from \eqref{eq:A-eps} that:
\begin{corollary}[]
  \label{cor:yau-conj}
If a closed minimal \( k \)-submanifold \( \Sigma \) of \( S^n \) has
  multiplicity \( m \) at a point, then its volume is at least \( m\omega_k \).
\end{corollary}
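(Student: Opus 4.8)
The plan is to derive the bound directly from inequality \eqref{eq:A-eps} established in Proposition \ref{prop:eps-sphere}. That inequality reads
\[
\frac{A}{m}\geq \frac{\omega_k}{2} + \omega_{k-1}\, F(\epsilon),\qquad F(\epsilon):=\int_{\epsilon}^{\pi/2}\sin^{k-1}t\, dt + \frac{(1-\cos^2\epsilon)^{k/2}}{k\cos\epsilon},
\]
where \( \epsilon\geq 0 \) is the distance from \( \Sigma \) to the antipodal point \( p' \). Since the right-hand side is monotone in \( F(\epsilon) \), it suffices to bound \( F \) from below by its value at \( \epsilon=0 \); thus the whole matter reduces to showing that \( F \) is minimised at \( \epsilon = 0 \) on the interval \( [0,\pi/2) \).

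The key step is therefore to compute \( F' \) and check that it is non-negative. Differentiating the first term gives \( -\sin^{k-1}\epsilon \). Differentiating the second term \( \frac{\sin^k\epsilon}{k\cos\epsilon} \) by the quotient rule produces \( \sin^{k-1}\epsilon + \frac{\sin^{k+1}\epsilon}{k\cos^2\epsilon} \). The two copies of \( \sin^{k-1}\epsilon \) cancel, leaving
\[
F'(\epsilon) = \frac{\sin^{k+1}\epsilon}{k\cos^2\epsilon}\geq 0\qquad \text{on } [0,\pi/2),
\]
so \( F \) is increasing and \( F(\epsilon)\geq F(0) = \int_0^{\pi/2}\sin^{k-1}t\, dt \). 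Invoking the standard identity \( \omega_k = 2\omega_{k-1}\int_0^{\pi/2}\sin^{k-1}t\, dt \), this minimum equals \( \frac{\omega_k}{2\omega_{k-1}} \).

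Substituting back, I would obtain \( \frac{A}{m}\geq \frac{\omega_k}{2} + \omega_{k-1}\cdot\frac{\omega_k}{2\omega_{k-1}} = \omega_k \), that is \( A\geq m\omega_k \), as claimed. The same conclusion can be phrased purely through the domain of definition of \( \epsilon(\cdot,k) \): since \( F \) increases from \( F(0)=\frac{\omega_k}{2\omega_{k-1}} \) to \( +\infty \), the defining equation \eqref{eq:eps-sphere} for \( \epsilon(A/m,k) \) admits a solution in \( [0,\pi/2) \) exactly when \( A/m\geq \omega_k \), and Proposition \ref{prop:eps-sphere} guarantees that such a solution exists. The only genuine computation is the sign of \( F' \), where the cancellation of the \( \sin^{k-1}\epsilon \) terms is what makes the argument clean; the remaining steps are bookkeeping with the normalisation constant \( \omega_k \).
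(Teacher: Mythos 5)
Your proof is correct and takes essentially the same route as the paper: the paper deduces Corollary \ref{cor:yau-conj} precisely ``from the domain of definition of \( \epsilon(\cdot,k) \) or from \eqref{eq:A-eps}'', and your computation \( F'(\epsilon)=\frac{\sin^{k+1}\epsilon}{k\cos^{2}\epsilon}\geq 0 \) together with \( F(0)=\frac{\omega_k}{2\omega_{k-1}} \) is exactly the verification (of the monotonicity of \( F \) and of the value \( \epsilon(\omega_k,k)=0 \)) that the paper leaves implicit.
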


\begin{remark}[]
  \label{rem:yau-conj}
It was conjectured by Yau \cite[Problem 31]{Yau12_ChernGreatGeometer} that the volume of a
non-totally geodesic minimal hypersurface of \( S^n \) is lower bounded by that of the
Clifford tori. Corollary \ref{cor:yau-conj} shows that the volume of a non-embedded
minimal hypersurface of \( S^n \) is at least \( 2\omega_{n-1} \) and thus confirms the
conjecture for non-embedded hypersurfaces. By a different method, Ge and Li
\cite{Ge.Li22_VolumeGapMinimal} give a slightly weaker version of Corollary
\ref{cor:yau-conj}, which is also sufficient to confirm Yau's conjecture in this case.
\end{remark}

Proposition \ref{prop:eps-sphere} can be tested on the Veronese surface, which is
the image of the conformal harmonic immersion \( f: S^2 \longrightarrow S^4 \):
\[
f(x,y,z) = \left(\sqrt{3}xy, \sqrt{3}yz, \sqrt{3}zx, \frac{\sqrt{3}}{2} (x^2 - y^2),
\frac{1}{2}(x^2 + y^2 - 2z^2)\right).
\]
This map takes the same value on antipodal points of \( S^2 \) and descends to an
embedding of \( \mathbb{R}P^2 \) into \( S^4 \). Because \( f^*g_{S^4} = 3g_{S^2} \), the
image has area \(6\pi \). The antipodal point of \( f(0,0,1) \) is at distance
\( \epsilon = \frac{\pi}{3} \) to the surface. Because \( f \) is
\( SO(3) \)-equivariant, this is also the
smallest \( \epsilon \) such that the surface is \( \epsilon \)-antipodal.

\subsection{Minimising cones}

Area-minimising cones in Euclidean space appear naturally as \emph{oriented tangent cones}
of an area-minimising surface \cite{Morgan16_GeometricMeasureTheory}. It follows from
Lemma \ref{lem:less-than-tube} that in \(\mathbb{R}^n\) and \(\mathbb{H}^n\), sections of
a minimising cone cannot be spanned by a minimal submanifold.  By rescaling, a hyperbolic
minimising cone is also Euclidean minimising. The converse is also true, as pointed out by
Anderson \cite{Anderson82_CompleteMinimalVarieties}. More precisely, let \( \gamma \) be a
\( (k-1) \)-submanifold of a geodesic sphere centred at the origin of the Poincaré model
and suppose that the radial cone built on \( \gamma \) is Euclidean-minimising. Anderson
proved that the cone is also hyperbolic minimising.  The following result is an
improvement of this, in the sense that it also rules out complete minimal submanifolds of \( \mathbb{H}^n \).

\begin{proposition}
\label{prop:minimising-cone}
Let \( \gamma \) be a \( (k-1) \)-submanifold of the sphere at infinity (or a geodesic sphere) such that in the
Poincaré model, the radial cone built upon it is Euclidean minimising. Then there is no
minimal \( k \)-submanifold of \( \mathbb{H}^n \) with ideal boundary (respectively boundary) \( \gamma \).
\end{proposition}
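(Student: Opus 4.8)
The plan is to sandwich the Euclidean volume of a hypothetical minimal filling between two bounds and then appeal to the equality case. Work in the Poincaré ball with centre \( O \) and take \( h=\xi_0 \), the time coordinate minimised at \( O \), with critical value \( h_0=1 \). Its gradient flow lines are the radial geodesics from \( O \), which are straight Euclidean segments; hence the \( h \)-tube \( T_\gamma \) is exactly the radial cone \( C \) over \( \gamma \), agreeing as a set with the Euclidean cone, and the \( \tilde g \)-bookkeeping of Corollary \ref{cor:est-vol} applies verbatim. Suppose, for contradiction, that a minimal \( k \)-submanifold \( \Sigma \) with boundary (resp. ideal boundary) \( \gamma \) exists.

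First I would prove \( \vol_E(\Sigma)\le \vol_E(C) \), where \( \vol_E \) denotes Euclidean \( k \)-volume. Since \( \Sigma \) is hyperbolic-minimal, Theorem \ref{thm:monotonicity-warped} makes its naturally (time-)weighted density increasing, and this persists for its forward tube extension. By the chain \eqref{eq:chain} the Euclidean weight \( P_3=(1+\xi)^{-k} \) is weaker than the natural weight, so Lemma \ref{lem:comparison}(1) promotes this to monotonicity of the Euclidean-weighted density of the tube extension. Feeding this into the equivalence of Lemma \ref{lem:less-than-tube} (monotonicity in the extended region is equivalent to the tube volume bound) gives precisely \( \vol_E(\Sigma)\le \vol_E(C) \); this is the realisation of the remark that the section of a minimising cone cannot be spanned by a minimal submanifold. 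In the ideal-boundary case I would run this on the truncations \( \Sigma\cap\{\xi\le t\} \) and let \( t\to\infty \), using that \( \Sigma \) meets \( S_\infty \) orthogonally, so that as in \eqref{eq:est-AR-1} the boundary volumes converge and \( \vol_E(\Sigma) \) is finite.

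The reverse bound is immediate from the hypothesis: the Euclidean closure of \( \Sigma \) is an admissible competitor with the same boundary \( \gamma\subset\partial B \), so the Euclidean-minimising property of \( C \) gives \( \vol_E(C)\le\vol_E(\Sigma) \). Combining the two yields the equality \( \vol_E(\Sigma)=\vol_E(C) \).

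The heart of the argument, and the step I expect to be most delicate, is the resulting rigidity. Equality in the volume comparison must force the pointwise inequality \( |\nabla^\Sigma h|\le|\nabla h| \) underpinning the monotonicity to be saturated almost everywhere, which holds exactly when \( \nabla h \) is tangent to \( \Sigma \); then \( \Sigma \) is invariant under the radial gradient flow, hence a cone over its section \( \gamma \), so \( \Sigma=C \). Equivalently, a Euclidean area-minimiser is Euclidean-minimal, so \( \Sigma \) would be minimal for both \( g_E \) and the conformal metric \( g_{\mathbb{H}} \); as the conformal factor is radial, the two conditions force the radial direction to be tangent to \( \Sigma \), giving the same conclusion. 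But a genuine minimising cone carries a conical singularity at \( O \) (the degenerate case where \( \gamma \) is a great subsphere and \( C \) a totally geodesic disc being excluded), so \( \Sigma=C \) cannot be a smooth minimal \( k \)-submanifold. This contradiction proves the claim. The point requiring care is exactly the propagation of the global volume equality to the pointwise equality \( |\nabla^\Sigma h|=|\nabla h| \), i.e. the equality cases of Lemmas \ref{lem:less-than-tube} and \ref{lem:comparison}.
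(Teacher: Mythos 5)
Your proof is correct and follows essentially the same route as the paper's: hyperbolic minimality gives the naturally weighted monotonicity (including for the tube extension), the chain \eqref{eq:chain} together with Lemma \ref{lem:comparison} downgrades it to Euclidean-weighted monotonicity, and Lemma \ref{lem:less-than-tube} converts this into \( \vol_E(\Sigma)\leq \vol_E(C) \), which is played off against the Euclidean-minimising property of the cone. The only difference is that the paper's two-line proof stops at this non-strict inequality and leaves the equality case implicit, whereas you supply the rigidity step — equality forces \( \Sigma \) to be simultaneously \( g_E \)- and \( g_{\mathbb{H}} \)-minimal, hence radially invariant, hence the singular cone itself (the totally geodesic case excluded) — which is indeed what is needed to close the argument for a genuine contradiction.
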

\begin{proof}
If there was such a a minimal submanifold, then it would satisfy the Euclidean
monotonicity and thus by Lemma \ref{lem:less-than-tube} would have Euclidean area smaller
than that of the cone.
\end{proof}

To illustrate Proposition \ref{prop:minimising-cone}, let \( \gamma_0 \) be the standard
Hopf link given by the intersection of the pair of planes \( zw=0 \) with the unit sphere
\( S^3 \) of \( \mathbb{R}^4\cong \mathbb{C}^2 \). Since the pair of planes is Euclidean
minimising, it follows from Proposition \ref{prop:minimising-cone} that it is the only
minimal surface of \( \mathbb{H}^4 \) filling \( \gamma_0 \). Now let  \(\gamma_\epsilon\) be
the perturbed Hopf links cut out by the complex curves \(zw=\epsilon\), \(\epsilon\in (0,
\frac{1}{2})\). Clearly the pairs of planes filling them are no longer
Euclidean minimising and so the proof of Proposition \ref{prop:minimising-cone} fails. It
is possible to write down explicitly a family of minimal annuli of \( \mathbb{H}^4 \)
filling the
\( \gamma_\epsilon \).

In fact, let \(g\) be radially conformally flat metric, that is
\(g = e^{2\varphi(\rho)} g_E\) where  \(\varphi\) is a function of \(\rho:=|z|^2 + |w|^2\),
we will point out a family \( M_{C,\varphi} \) of \( g \)-minimal
annuli that are invariant by the \({S}^1\) action
\begin{equation}
\label{eq:hopf-action}
(z,w)\mapsto(ze^{i\theta}, we^{-i\theta})
\end{equation}
If \(\mathbb{C}^2\) is identified with the
space of quaternions by \((z,w) \mapsto z + jw\), this action corresponds to
multiplication on the left by \(e^{i\theta}\). 
These surfaces are obtained by rotating a curve in the real plane \(\im z = \im w =
0\). Such curve is given by an equation \(zw = F(\rho)\) where \(F\) is a real function on \(\rho\). The minimal surface
equation is equivalent to the following second order ODE on \(F\)
\[
 \frac{X'}{X} - \frac{Y'}{Y} + \frac{1}{\rho} +
\frac{\varphi'}{2}\left[8+\rho\left(\frac{X^2}{Y^2}-4\right)\frac{F'}{F}
\right]=0,\quad\text{where }X=F-F'\rho,\quad Y = \frac{F'}{2}\sqrt{\rho^2 - 4F^2}.
\]
This can be reduced to a first order ODE using symmetry. When we rotate a
solution curve in the real plane by an angle \( \alpha \), the new
curve is still a solution because this rotation is equivalent to multiplying on the right of \(z_1+jz_2\) by \(e^{j\alpha}\)
and so commutes with the left multiplication by \(e^{i\theta}\).

Concretely, by a change of variable \(F=\frac{\rho}{2}\sin\theta(\rho)\) the ODE reduces
either to the first order Bernoulli equation
\(\theta'^2 = -\rho^2 + C^2\rho^4 e^{4\varphi}\) for a parameter \(C > 0\), or to
\(\theta'=0\) which corresponds to pairs of 2-planes.  The profile curve can be described
in a more geometric fashion, as in Proposition \ref{prop:min-annuli}. Note that the curve
\eqref{eq:min-annuli} is a hyperbola when \( g=g_E \) and so the minimal surfaces
are the complex curves \(z_1 z_2 = \const\).

\begin{proposition}[]
\label{prop:min-annuli}
Let \(M_{C,\varphi}\) be the surface in \(\mathbb{C}^2\) given by rotating by
\eqref{eq:hopf-action} the following curve in \( \mathbb{R}^2 \):
\begin{equation}
\label{eq:min-annuli}
\sin^2\psi = C^2\frac{e^{-4\varphi}}{\rho^2}, \quad C > 0
\end{equation}
Here \(\psi\) is the angle formed by the tangent of the curve at a point \(p\) and the
radial direction \(\overrightarrow{Op}\). Then \(M_{C,\varphi}\) is
minimal under the metric \(g =
e^{2\varphi}g_E\). Up to \(SO(4)\), these annuli and the 2-planes are the only
minimal surfaces obtained as orbit of a real curve by the rotation \eqref{eq:hopf-action}.
\end{proposition}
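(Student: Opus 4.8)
The plan is to reduce minimality to the geodesic equation of a weighted metric on the real plane and then read off \eqref{eq:min-annuli} as Clairaut's first integral. The reflection $R:(z,w)\mapsto(\bar z,\bar w)$ is an isometry of $g=e^{2\varphi(\rho)}g_E$ whose fixed set is the real plane $\Pi=\{\im z=\im w=0\}$, so $\Pi$ is totally geodesic and the circles \eqref{eq:hopf-action} meet $\Pi$ orthogonally. The $g$-length of the orbit through a real point is $2\pi r\,e^{\varphi}$ with $r=\sqrt\rho$, and by orthogonality the area of a surface swept from a curve $\gamma\subset\Pi$ is $2\pi\int_\gamma r\,e^{2\varphi}\,ds_E$, a weighted length on $\Pi$. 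Its critical curves are exactly the geodesics of the conformal metric $\hat g=r^2e^{4\varphi}g_E$.

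To turn criticality among such sweeps into genuine minimality I would descend to the $S^1$-quotient: an $S^1$-invariant surface is minimal precisely when its image is a geodesic of the weighted quotient metric (orbit length squared times the base metric) on $\mathbb{C}^2/S^1$. Since $R$ normalises \eqref{eq:hopf-action} it descends to an isometric involution of this quotient whose fixed locus is the image of $\Pi$, and is therefore totally geodesic; the restriction of the weighted quotient metric to it is, up to a constant, exactly $\hat g$. Hence an $\hat g$-geodesic in $\Pi$ is a geodesic of the full quotient metric and sweeps a minimal surface. Writing $\hat g=d\sigma^2+G^2d\phi^2$ with $G=r^2e^{2\varphi}$ and $\phi$ the polar angle in $\Pi$, Clairaut's relation $G\sin\psi\equiv C$ becomes $\sin^2\psi=C^2e^{-4\varphi}/\rho^2$, which is \eqref{eq:min-annuli} and is the geometric form of the first-order reduction of the minimal-surface ODE obtained above. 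This proves that $M_{C,\varphi}$ is $g$-minimal.

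For the classification I would integrate the geodesic equation of the rotationally symmetric $\hat g$, organised by the Clairaut constant $C\ge0$. For $C>0$ the solutions are the non-radial geodesics, which sweep the annuli $M_{C,\varphi}$; for $C=0$ they are the meridians $\phi\equiv\const$, equivalently $\theta'=0$, and a radial ray $\{w=\tan\phi_0\,\bar z\}$ sweeps to a totally geodesic $2$-plane (a full line to a pair). A geodesic is determined by $C$ together with its angular position, and the rotation $\phi\mapsto\phi+\alpha$ of $\hat g$ is induced by right multiplication by $e^{j\alpha}$, an isometry in $SO(4)$ commuting with \eqref{eq:hopf-action}. Hence every minimal sweep of a real curve is $SO(4)$-congruent to some $M_{C,\varphi}$ or to a $2$-plane, giving the stated exhaustion.

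The step I expect to be the main obstacle is the reduction itself: one must justify the $S^1$-quotient passage carefully, in particular that $\Pi$ descends to the totally geodesic fixed locus of the induced reflection, so that criticality among real-plane sweeps genuinely coincides with minimality among all variations rather than only equivariant ones. One must also treat the fixed locus $\rho=0$ and the parallels $\rho=\const$, where the Clairaut description degenerates and where one should check that no spurious solutions are introduced. Once the reduction and the Clairaut identity are in place, the ODE classification is routine.
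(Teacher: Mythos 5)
Your proof is correct, but it takes a genuinely different route from the paper's. The paper proceeds by direct computation: it writes the swept surface through a profile equation \(zw=F(\rho)\), derives the minimal surface equation as an explicit second-order ODE in \(F\), reduces it to first order by noting that rotating the profile curve (right quaternionic multiplication by \(e^{j\alpha}\)) commutes with the action \eqref{eq:hopf-action}, and after the substitution \(F=\frac{\rho}{2}\sin\theta(\rho)\) arrives at the first-order equation \(\theta'^2=-\rho^2+C^2\rho^4e^{4\varphi}\), whose geometric restatement is \eqref{eq:min-annuli}; the branch \(\theta'=0\) gives the planes. You instead never compute the minimal surface equation: you use the Hsiang--Lawson/symmetric-criticality reduction (legitimate here, since \(S^1\) is compact and acts freely away from the origin) to identify minimality of an invariant surface with the geodesic equation for the orbit-length-weighted quotient metric, observe that the reflection \((z,w)\mapsto(\bar z,\bar w)\) descends to an isometric involution whose fixed locus is exactly the image of the real plane (the orbits with \(zw\in\mathbb{R}\)) and hence totally geodesic, and note that on this locus the weighted metric is the rotationally symmetric \(\hat g=\rho e^{4\varphi}g_E\), so Clairaut's relation \(G\sin\psi=C\) with \(G=\rho e^{2\varphi}\) is precisely \eqref{eq:min-annuli}. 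Your Clairaut constant and the paper's constant \(C\) are the same Noether integral arising from the same commuting rotations, so the two arguments are dual in spirit; what yours buys is conceptual transparency (the first integral is classical, and no ODE manipulation is needed), while the paper's computation yields the explicit first-order ODE that it reuses afterwards (to discuss \(\theta_C\) in the spherical case and the renormalised area \eqref{eq:ARMC}). The caveats you flag are real but harmless and are not resolved by the paper either: symmetric criticality is unproblematic for a compact group, and the spurious solutions of \eqref{eq:min-annuli} given by parallels \(\rho=\const\) (geodesics only where \((\rho e^{2\varphi})'=0\), e.g. the Clifford torus at \(C=1\) in the spherical case) are an ambiguity already present in the statement of the proposition, since a parallel satisfies \eqref{eq:min-annuli} pointwise without sweeping a minimal torus in general.
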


When \( g \) is the hyperbolic metric, the renormalised area of the annuli \( M_C \) can
be computed to be
\begin{equation}
\label{eq:ARMC}
\mathcal{A}_R(M_C) = 4\pi \int_{a}^\infty\left[ \frac{\xi^2 - 1}{\sqrt{(\xi^2-1)^2 - C^2}} - 1\right] d\xi -
4\pi a
\end{equation}
where \( a =  (C+1)^{1/2}\). Note that just by Theorem
\ref{thm:upper-AR}, we know the renormalised area of \( M_C \) tends to \( -\infty \) as \(
C\to+\infty \).

Another radially conformally flat metric is the round sphere, with
\(e^\varphi = \frac{2}{1+\rho}\). The parameter \(C\) is in \((0,1)\). Seen from the
origin, the solution curve sweeps out an angle \(\theta_C\) between \(\frac{\pi}{2}\)
(\(C=0\), the surface is a totally geodesic \({S}^2\) ) and \(\frac{\pi}{\sqrt{2}}\)
(\(C=1\), the surface is (part of) the Clifford torus). In particular, if \(\theta_C\) is
a rational multiple of \(\pi\), we can close the surface by repeating the profile
curve. Benjamin Aslan pointed out to the author that the minimal annuli in this case are
bipolar transform of the Hsiang--Lawson annuli \(\tau_{0,1,\alpha} \). In
\cite{Hsiang.Lawson71_MinimalSubmanifoldsLow}, Hsiang and Lawson constructed a family
\(\tau_{p,q,\alpha} \) of minimal annuli in \( S^3 \) that are invariant by the
\( (p,q) \)-rotation
\[
(z,w) \longrightarrow (e^{ip\theta}z, e^{iq\theta}w)
\]
Here \( S^3 \) is seen as the unit sphere of \( \mathbb{C}^2 \). The bipolar transform was
defined by Lawson \cite{Lawson70_CompleteMinimalSurfaces} by wedging a conformal harmonic
map \(f: \Sigma \longrightarrow {S}^3\subset \mathbb{R}^4 \) with its \( \mathbb{R}^4
\)-valued Gauss map. The result is a map from \( \Sigma \) to the unit sphere
of \( \Lambda^2 \mathbb{R}^4 \cong \mathbb{R}^6 \), which is also conformal and
harmonic. This transforms a minimal surface of \({S}^3 \) into a minimal surface of
\({S}^5 \). If a surface is invariant by the \( (p,q) \)-rotation, its
bipolar transform is contained in a subsphere \( S^4 \) and is invariant by a
\( (p+q, p-q)\)-rotation. When \( p=0,q=1 \), they are the annuli \( M_C \).

\section{Weighted monotonicity in spaces with curvature bounded from above}
\label{sec:bounded-curv}
Fix a point \(O\) in a Riemannian manifold \((M^n,g)\), and let \(r_{\rm inj}\) be the
injectivity radius. Let \( r \) be the distance function to \(O\). Its Hessian at
a point \( p\in B(O,r_{\rm inj}) \) is:
\begin{equation}
\label{eq:hess-r}
\hess r (\partial_r, \cdot) =0, \qquad \hess r (v, v)=: I(v),\ \forall v\perp \partial_r
\end{equation}
where \(I(v)=\int_{\Gamma}\left(|\dot V|^2 - K_M(\dot\gamma,V)|V|^2\right)\) is the index
form of the Jacobi field \(V\) along the geodesic \(\Gamma\) between \(O\) and \(p\) that
interpolates \(0\) at \(O\) and \(v\) at \(p\). 
When the sectional curvature
satisfies \(K_M\leq -a^2\) (respectively \(b^2\)), one can check that
\(I(v) \geq a\coth(ar)|v|^2\) (respectively \(b\cot (br)|v|^2\)). This gives an estimate of \(\hess r\) on 
directions orthogonal to \(\partial_r\). 

\begin{proposition}[]
\label{prop:hess-r}
Inside \(B(O,r_{\rm inj})\), 
\begin{enumerate}
\item If \(K_M\leq -a^2\) then \(\hess (a^{-2}\cosh ar) \geq \cosh ar.\ g\).
\item If \(K\leq b^2\) then  \(\hess (-b^{-2}\cos br) \geq \cos br.\ g\) when \(r\leq \frac{\pi}{b}\).
\end{enumerate}
\end{proposition}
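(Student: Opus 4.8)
The plan is to reduce both statements to the Hessian estimate for the distance function $r$ recorded in \eqref{eq:hess-r}, namely that $\hess r(\partial_r,\cdot)=0$ and that for $v\perp\partial_r$ one has $\hess r(v,v)=I(v)$, the index form of the associated Jacobi field. The key observation is that for a function of the form $h=\phi(r)$, the chain rule for the Hessian gives
\begin{equation}
\label{eq:hess-phi-r}
\hess h = \phi''(r)\,dr\otimes dr + \phi'(r)\,\hess r.
\end{equation}
Since $\hess r$ annihilates $\partial_r$, the $dr\otimes dr$ piece is governed purely by $\phi''$, while on the orthogonal complement only $\phi'(r)\hess r$ contributes. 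So I would compute $\phi''$ and $\phi'$ for the two candidate functions and check that \eqref{eq:hess-phi-r} dominates $\phi(r)\,g$ in each direction separately.

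For part (1), take $\phi(r)=a^{-2}\cosh(ar)$, so that $\phi'(r)=a^{-1}\sinh(ar)$ and $\phi''(r)=\cosh(ar)$. In the radial direction, $\hess h(\partial_r,\partial_r)=\phi''(r)=\cosh(ar)=\phi(a^{-2}\cosh ar)\cdot g(\partial_r,\partial_r)\cdot a^2$; more precisely, since $g(\partial_r,\partial_r)=1$, the radial component equals exactly $\cosh(ar)$, matching the desired bound with equality. For $v\perp\partial_r$ with $|v|=1$, equation \eqref{eq:hess-phi-r} gives $\hess h(v,v)=\phi'(r)I(v)=a^{-1}\sinh(ar)\,I(v)$. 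The curvature bound $K_M\leq -a^2$ yields $I(v)\geq a\coth(ar)$, as stated in the excerpt, so
\[
\hess h(v,v)\geq a^{-1}\sinh(ar)\cdot a\coth(ar)=\cosh(ar),
\]
which is exactly $\cosh(ar)\,g(v,v)$. Combining the radial and orthogonal estimates gives $\hess h\geq \cosh(ar)\,g$ as a symmetric $2$-tensor inequality, which is the claim.

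Part (2) is entirely parallel with $\phi(r)=-b^{-2}\cos(br)$, giving $\phi'(r)=b^{-1}\sin(br)$ and $\phi''(r)=\cos(br)$. The radial direction again yields exactly $\cos(br)$, while for $v\perp\partial_r$ the bound $K_M\leq b^2$ gives $I(v)\geq b\cot(br)$, valid for $r<\pi/b$, and hence $\hess h(v,v)\geq b^{-1}\sin(br)\cdot b\cot(br)=\cos(br)$; one must keep $r\leq\pi/b$ so that $\sin(br)\geq 0$ and the index form estimate applies. The main point requiring care is the combination of the two directional estimates into a single inequality of symmetric tensors: because $\hess r$ has no mixed radial–tangential terms (the first equation of \eqref{eq:hess-r}) and $dr\otimes dr$ only contributes radially, the full Hessian is block-diagonal with respect to the splitting $\mathbb{R}\partial_r\oplus(\partial_r)^\perp$, so the two separate scalar bounds assemble directly into $\hess h\geq\phi(r)\,g$ without cross-terms. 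I expect the only genuine subtlety to be confirming that the index-form lower bounds $I(v)\geq a\coth(ar)$ and $I(v)\geq b\cot(br)$ hold in the stated ranges — these follow from comparing $V$ with the Jacobi field in the constant-curvature model via the index-form minimization characterization — but the excerpt has already asserted these, so I may invoke them directly.
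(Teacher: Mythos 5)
Your proposal is correct and follows essentially the same route as the paper, which states the Hessian of $r$ via the index form \eqref{eq:hess-r}, invokes the comparison bounds $I(v)\geq a\coth(ar)|v|^2$ (respectively $b\cot(br)|v|^2$), and leaves the remaining computation implicit; you have simply filled in that computation — the chain rule $\hess\phi(r)=\phi''\,dr\otimes dr+\phi'\,\hess r$, the block-diagonal structure, and the two directional estimates. Your attention to the sign of $\phi'(r)=b^{-1}\sin(br)$ on $[0,\pi/b]$ in part (2) is exactly the point that makes the multiplication by the index-form bound legitimate there.
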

This means that the functions \(h=a^{-2}\cosh ar\) and \(h=-b^{-2}\cos br\) satisfy \(\hess h \geq
U.g\). 
Here \(U= a^2 h\) (respectively \(-b^2 h\)), \(V = |\nabla h|^2 = a^2
(h^2-1)\) (respectively \(-b^2 (h^2-1)\)) and we still have \(U = \frac{1}{2}V'\).
When \(M\) is \(\mathbb{H}^n\) or \({S}^n\), we recover the time-coordinate and
the Euclidean coordinate in Examples \ref{ex:xi-H} and \ref{ex:xi-S}.

As explained in Remark \ref{rem:monotonicity-warped}, we still have weighted monotonicity
theorem for \(h\).  It is more convenient here to see the weight as a function of \(r\)
instead of \(h\). The \emph{\(P\)-volume} functional of a \( k \)-submanifold is
\(A_{P}(\Sigma)(t):= \int_{\Sigma, r\leq t}P(r)\) and the \emph{\(P\)-density} is
\(\Theta_P:= \frac{A_P}{Q_P}\) where \( Q_P\) is the \(P\)-volume of a ball of radius
\(t\), not in \( M \) but in space-form:
\begin{equation}
\label{eq:Q-geodesic}
 Q_P(t):= \begin{cases}
\omega_{k-1}\int_{r\leq t}P(r)\frac{\sinh^{k-1} ar}{a^{k-1}}dr       ,  & \text{when } K_M\leq -a^2 \\
\omega_{k-1}\int_{r\leq t}P(r)\frac{\sin^{k-1} br}{b^{k-1}}dr       , & \text{when } K_M\leq b^2
       \end{cases}
     \end{equation}

The \emph{naturally weighted volume} and \emph{naturally weighted density} correspond to
\( P=U=\cosh ar \) (respectively \( \cos br \)), for which \( Q_U(t)=
\frac{\omega_{k-1}}{k} a^{-k}{\sinh^k at}  \) (\( \frac{\omega_{k-1}}{k}b^{-k} \sin^k bt \) respectively).
We define the \emph{eligible interval} \([0,r_{ \max})\) to be \([0, r_{\rm inj})\) when
\(K_M\leq -a^2\) and \([0, \min (r_{\rm inj}, \frac{\pi}{2b}))\) when \(K_M\leq b^2\).

\begin{theorem}[]
\label{thm:monotonicity-geodesic}
Let \(M\) be a Riemannian manifold with sectional curvature \(K_M\leq -a^2\) (or \(K_M\leq b^2\)) 
and \(\Sigma\subset M\) be an extension of a minimal \( k \)-submanifold by geodesic rays, then
the naturally weighted density \(\Theta_U(\Sigma)(t)\) is an increasing function in \( [0,
r_{\max})\). 
\end{theorem}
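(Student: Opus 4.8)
The plan is to reduce Theorem \ref{thm:monotonicity-geodesic} to the argument already carried out in Theorem \ref{thm:monotonicity-warped}, invoking the one-sided version recorded in Remark \ref{rem:monotonicity-warped}. The function to use is $h = a^{-2}\cosh(ar)$ when $K_M \leq -a^2$ (and $h = -b^{-2}\cos(br)$ when $K_M \leq b^2$), whose gradient $\nabla h = a^{-1}\sinh(ar)\,\partial_r$ points radially from $O$. By Proposition \ref{prop:hess-r} this $h$ satisfies the one-sided bound $\hess h \geq U g$, with $U$ and $V = |\nabla h|^2$ functions of $r$ (equivalently of $h$) obeying $U = \tfrac12 V'$, and with $U \geq 0$ throughout the eligible interval $[0, r_{\max})$. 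The two departures from the exact warped setting of Section \ref{sec:orga12f250} are that the Hessian identity becomes an inequality and that, since $M$ is no longer an exact warped product, the normalising volume $Q_U$ must be the \emph{space-form} volume of \eqref{eq:Q-geodesic}; the design of $Q_U$ is exactly what makes the comparison close.

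The first step is to establish the pointwise bound $\Delta_\Sigma h \geq kU$ on both constituents of $\Sigma$. On the minimal part, Lemma \ref{lem:chain-rule} gives $\Delta_\Sigma h = \tr_\Sigma \hess h$ because the tension field vanishes, and tracing $\hess h \geq Ug$ over the $k$-dimensional tangent space yields $\tr_\Sigma \hess h \geq kU$. On the geodesic-ray extension the mean curvature no longer vanishes, so here I would instead use the second alternative of Lemma \ref{lem:chain-rule}: the rays are integral curves of $\nabla h$, so this part is everywhere tangent to $\nabla h$, the term $dh\cdot\tau(f)$ drops out, and again $\Delta_\Sigma h = \tr_\Sigma \hess h \geq kU$. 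Thus the bound holds uniformly on all of $\Sigma$.

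With $\Delta_\Sigma h \geq kU$ in hand, I would rerun the computation of Theorem \ref{thm:monotonicity-warped} essentially verbatim. Stokes' theorem, with the boundary bookkeeping for the extension, bounds $kA_U(t)$ from above by an integral of $|\nabla^\Sigma h|$ over the level set $\{r=t\}$; the coarea formula expresses $A_U'(t)$ in terms of $|\nabla^\Sigma h|^{-1}$; and the elementary gradient bound $|\nabla^\Sigma h| \leq |\nabla h| = V^{1/2}$ combines the two into the differential inequality $(\log A_U)' \geq (\log Q_U)'$, where $Q_U$ is now the space-form volume of \eqref{eq:Q-geodesic}. Since $Q_U$ is engineered so that its logarithmic derivative equals the lower bound just obtained for $(\log A_U)'$, the density $\Theta_U = A_U/Q_U$ is increasing on $[0,r_{\max})$. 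The case $K_M \leq b^2$ is identical after substituting the $\cos$/$b$ analogues of $h$, $U$, $V$, the eligible interval being precisely the range where $U = \cos(br) \geq 0$ and $r < r_{\rm inj}$.

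The main obstacle is the passage of the trace inequality to the geodesic-ray piece: there one cannot appeal to minimality and must instead verify tangency of the rays to $\nabla h$ in order to kill the mean-curvature term in Lemma \ref{lem:chain-rule}. This is where the radial identity $\hess r(\partial_r, \cdot)=0$ from \eqref{eq:hess-r} is used, as it pins the contribution of the radial tangent direction to exactly $U$, so that the remaining $k-1$ transverse directions, each contributing at least $U$ by Proposition \ref{prop:hess-r}, raise the trace to at least $kU$.
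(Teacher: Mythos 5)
Your proof is correct and follows exactly the route the paper intends: the paper gives no separate proof of Theorem \ref{thm:monotonicity-geodesic}, but derives it from Proposition \ref{prop:hess-r} together with Remark \ref{rem:monotonicity-warped}, i.e.\ the one-sided Hessian bound $\hess h \geq U g$ with $U = \tfrac12 V'$ fed into the Stokes/coarea argument of Theorem \ref{thm:monotonicity-warped}, with the geodesic rays playing the role of the gradient tube (they are tangent to $\nabla h$, so Lemma \ref{lem:chain-rule} applies). Your handling of the radial direction via \eqref{eq:hess-r} and of the eligible interval for the $K_M\leq b^2$ case matches the paper's setup, so there is nothing to add.
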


\begin{corollary}
Let \(\Sigma\subset M\) be a minimal \( k \)-submanifold containing the point \(O\) with
multiplicity \(m\) and \(l_t\) be the \( (k-1) \)-volume of the intersection \(
\Sigma\cap\{r=t\}\) under the metric \( g \) of \( M \).
For all \(t\) in the eligible
interval, one has\begin{equation}
\label{eq:bcurv-est}
Q_U(t) \leq A_U(\Sigma)(t)\leq \frac{l_t}{k}V(t)^{1/2}
\end{equation}
In particular,
\[
 l_t \geq \begin{cases}
 m\omega_{k-1}\left(\frac{\sinh at}{a}\right)^{k-1}	  ,  & \text{if $K_M\leq -a^2$} \\
 m\omega_{k-1}\left(\frac{\sin bt}{b}\right)^{k-1}	  , & \text{if $K_M\leq b^2$}
	  \end{cases}
\]
\end{corollary}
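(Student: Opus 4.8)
The plan is to sandwich the naturally weighted volume $A_U(\Sigma)(t)$ between a lower bound coming from the monotonicity of the density at the center $O$ and an upper bound coming from the very divergence identity that produced that monotonicity; dividing the two bounds and inserting the explicit expressions for $Q_U$ and $V$ then yields the stated lower bound on $l_t$. So I would prove the refined chain $m\,Q_U(t)\leq A_U(\Sigma)(t)\leq \frac{l_t}{k}V(t)^{1/2}$, of which the displayed inequality $Q_U\leq A_U$ is the $m\geq 1$ consequence.

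For the lower inequality, I would invoke Theorem \ref{thm:monotonicity-geodesic} to get that $\Theta_U(\Sigma)(t)=A_U(\Sigma)(t)/Q_U(t)$ is non-decreasing on the eligible interval, and then compute its limit as $t\to 0^+$. As $r\to 0$ the weight $U(r)=\cosh ar$ (respectively $\cos br$) tends to $1$ and the space-form normalisation $Q_U(t)$ becomes the volume $\frac{\omega_{k-1}}{k}t^k$ of a Euclidean $k$-ball, so $\Theta_U$ degenerates to the classical unweighted density of $\Sigma$ at $O$, whose limit is by definition the multiplicity $m=\Theta^k(\Sigma,O)$. Monotonicity then gives $\Theta_U(\Sigma)(t)\geq m$, i.e. $A_U(\Sigma)(t)\geq m\,Q_U(t)$, throughout the eligible interval.

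For the upper inequality, I would rerun the estimate of Remark \ref{rem:monotonicity-warped}. Since $\hess h\geq U g$ by Proposition \ref{prop:hess-r} and $\Sigma$ is minimal, Lemma \ref{lem:chain-rule} gives $\Delta_\Sigma h=\tr_\Sigma \hess h\geq \tr_\Sigma(U g)=kU$. Integrating over $\Sigma\cap\{r\leq t\}$ and applying Stokes' theorem (the boundary contribution at $O$ vanishes, $O$ being a single point) yields $k\,A_U(\Sigma)(t)\leq \int_{\Sigma,\,r=t}|\nabla^\Sigma h|$. On the level set $r=t$ one has $|\nabla^\Sigma h|\leq |\nabla h|=V(t)^{1/2}$, and $V^{1/2}$ is constant there, so the last integral is at most $V(t)^{1/2}l_t$, giving $A_U(\Sigma)(t)\leq \frac{l_t}{k}V(t)^{1/2}$.

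Combining the two bounds yields $l_t\geq k\,m\,Q_U(t)/V(t)^{1/2}$, and plugging in $Q_U(t)=\frac{\omega_{k-1}}{k}a^{-k}\sinh^k at$ together with $V(t)^{1/2}=|\nabla h|(t)=a^{-1}\sinh at$ (and the $\sin bt$ analogues in the positive-curvature case) collapses the right-hand side to $m\,\omega_{k-1}(\sinh at/a)^{k-1}$ (respectively $m\,\omega_{k-1}(\sin bt/b)^{k-1}$), which is exactly the claimed estimate. The only genuinely delicate step is the identification of $\lim_{t\to 0^+}\Theta_U(\Sigma)(t)$ with the multiplicity $m$: one must check that the weight $U$ and the space-form correction in $Q_U$ are subleading as $t\to 0$ so that the weighted density agrees in the limit with the ordinary $k$-dimensional density, which is where the smoothness (or, for varifolds, the rectifiability) of $\Sigma$ at $O$ enters. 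Everything else reduces to the monotonicity already established and a one-variable computation.
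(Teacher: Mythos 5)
Your proposal is correct and follows essentially the same route as the paper: the lower bound comes from Theorem \ref{thm:monotonicity-geodesic} (monotonicity of $\Theta_U$ together with its limit $m$ at the origin, which the paper leaves implicit), and the upper bound comes from rerunning the Stokes identity \eqref{eq:stokes} with $\Delta_\Sigma h \geq kU$ and $|\nabla^\Sigma h|\leq V^{1/2}$, which is exactly what the paper means by ``the upper bound of $A_U$ follows from \eqref{eq:stokes}.'' You also correctly avoid Lemma \ref{lem:less-than-tube}, which the paper explicitly warns does not generalise here since geodesic cones in $M$ no longer have $P$-volume proportional to $Q$.
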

\begin{proof}
  The first half of \eqref{eq:bcurv-est} follows directly from Theorem
  \ref{thm:monotonicity-geodesic}. The second half is more subtle because the \(P\)-volume
  of a geodesic cone in \(M\) is no longer proportional to \( Q \) (so Lemma
  \ref{lem:less-than-tube} does not generalise). Instead, the upper bound of \(A_U\)
  follows from \eqref{eq:stokes}:
  \( A_U(\Sigma)(t) \leq \frac{1}{k}\int_{\Sigma, r=t}|\nabla^\Sigma h| \leq
  \frac{V(t)^{1/2}}{k}l_t.  \) The function \( V \) can also be rewritten as
  \( V(r) = a^{-2}\sinh^2ar \) ( \( b^{-2}\sin^2 br \) respectively).
\end{proof}

With an identical proof as Lemma \ref{lem:comparison}, we have:
\begin{lemma}[Comparison]
\label{lem:comparison-geodesic}
Let \(\Sigma\) be any \( k \)-submanifold of \( M \) not necessarily minimal and \(P_1,P_2\) be two
non-negative, continuous weights. Let \(Q_1, Q_2\) be defined from \(P_1,
P_2\) as in \eqref{eq:Q-geodesic} and  \( \Theta_1,\Theta_2
\) be the two densities. In the eligible interval, suppose that \( \Theta_2 \) is
increasing, then: 
\begin{enumerate}
\item If \(P_1\) is weaker than \(P_2\), i.e. \(\frac{P_1}{Q_1} \leq
   \frac{P_2}{Q_2}\), then \(\Theta_1 \) is also increasing and \(\Theta_1\geq\Theta_2\).
\item If \(P_2\) is weaker than \(P_1\), then \(\Theta_1\geq\Theta_2\).
\end{enumerate}
\end{lemma}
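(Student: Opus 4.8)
The plan is to transcribe the proof of Lemma \ref{lem:comparison} almost verbatim, since the two structural inputs that drive that argument both survive unchanged in the present setting, even though $\Sigma$ is not assumed minimal. The first input is the coarea identity. Writing $A_{P_i}(\Sigma)(t)=\int_{\Sigma,\,r\leq t}P_i(r)$, the coarea formula gives
\[
\frac{d}{dt}A_{P_i}(\Sigma)(t) = P_i(t)\int_{\Sigma,\, r=t}\frac{1}{|\nabla^\Sigma r|},
\]
so that the ratio $P_i^{-1}\,\tfrac{d}{dt}A_{P_i}$ is one and the same function for $i=1,2$. The second input is that differentiating \eqref{eq:Q-geodesic} produces a common radial factor,
\[
Q_i'(t) = P_i(t)\,W(t),\qquad W(t):=\omega_{k-1}\frac{\sinh^{k-1}(at)}{a^{k-1}}
\]
(respectively $\omega_{k-1}b^{1-k}\sin^{k-1}(bt)$), which plays exactly the role of $\omega_{k-1}V^{\frac{k}{2}-1}$ in the warped-product computation.

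With these two facts in hand I would substitute $A_{P_i}=Q_i\Theta_i$, equate the two expressions for $P_i^{-1}A_{P_i}'$, and obtain the analogue of \eqref{eq:comparison-1},
\[
\frac{Q_1}{P_1}\Theta_1' + W\Theta_1 = \frac{Q_2}{P_2}\Theta_2' + W\Theta_2 .
\]
Rearranging precisely as in the passage from \eqref{eq:comparison-1} to \eqref{eq:comparison-2} yields
\[
P_1^{-1}\frac{d}{dt}\Bigl(Q_1(\Theta_1-\Theta_2)\Bigr) = \left(\frac{Q_2}{P_2}-\frac{Q_1}{P_1}\right)\Theta_2'.
\]
The weakness hypothesis in case (1), $\tfrac{P_1}{Q_1}\leq \tfrac{P_2}{Q_2}$, is equivalent to $\tfrac{Q_2}{P_2}-\tfrac{Q_1}{P_1}\leq 0$, while in case (2) the inequality reverses; combined with $\Theta_2'\geq 0$ this fixes the sign of the right-hand side throughout the eligible interval, hence the monotonicity of the single function $Q_1(\Theta_1-\Theta_2)$.

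It then remains to supply the initial condition at $t=0$. Since both $\Theta_i=A_{P_i}/Q_i$ are $0/0$ ratios at the centre $O$, applying l'Hôpital's rule (or the density interpretation) shows that both approach the common value equal to the multiplicity of $\Sigma$ at $O$, so $Q_1(\Theta_1-\Theta_2)\to 0$ as $t\to 0$. Integrating the sign information from the displayed identity then pins down the ordering of $\Theta_1$ and $\Theta_2$ exactly as in the two cases of Lemma \ref{lem:comparison}, and feeding the resulting comparison back into the first displayed identity transfers the monotonicity from $\Theta_2$ to $\Theta_1$. Case (2) is identical with the roles of $P_1$ and $P_2$ interchanged.

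I do not expect a serious obstacle, precisely because minimality plays no role in a comparison statement: the only genuine point to verify is that $Q$, now defined by the \emph{space-form} integral \eqref{eq:Q-geodesic} rather than by an actual gradient tube inside $M$ (here Lemma \ref{lem:less-than-tube} no longer applies), still factorises as $Q_i'=P_iW$ with a single common $W$. This holds because the space-form volume element separates the weight $P_i(r)$ from the radial density. The remaining care is bookkeeping: restricting attention to the eligible interval $[0,r_{\max})$ where $W>0$ and the comparison integrand is defined, and justifying the limit at the possibly singular point $O$.
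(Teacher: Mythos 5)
Your proposal is correct and coincides with the paper's own proof, which consists precisely of the remark that the argument of Lemma \ref{lem:comparison} goes through verbatim; you correctly isolate the only two facts that need re-checking in the geodesic setting, namely that the coarea formula makes \(P_i^{-1}\frac{d}{dt}A_{P_i}\) independent of \(i\), and that the space-form normaliser \eqref{eq:Q-geodesic} still factorises as \(Q_i' = P_i W\) with a single radial factor \(W\) (the analogue of \(\omega_{k-1}V^{\frac{k}{2}-1}\)), the initial condition \(Q_1(\Theta_1-\Theta_2)\to 0\) at \(t=0\) being immediate since \(A_1\to 0\), \(Q_1\to 0\) and \(\Theta_2\) is bounded near the centre.

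One caveat worth recording: carried out as you describe, case (1) (\(P_1\) weaker than \(P_2\), \(\Theta_2\) increasing) yields \(\Theta_1\) increasing and \(\Theta_1\leq\Theta_2\), exactly as in part (1) of Lemma \ref{lem:comparison} --- the right-hand side \(\bigl(\frac{Q_2}{P_2}-\frac{Q_1}{P_1}\bigr)\Theta_2'\) is then nonpositive, so \(Q_1(\Theta_1-\Theta_2)\) decreases from \(0\). The conclusion ``\(\Theta_1\geq\Theta_2\)'' printed in part (1) of Lemma \ref{lem:comparison-geodesic} is therefore a typo in the paper (neither the paper's proof-by-reference nor your transcription can produce it, and only the monotonicity transfer of part (1) and the inequality of part (2) are ever used, e.g.\ in Proposition \ref{prop:uniform-area-sphere}); your phrase ``exactly as in the two cases of Lemma \ref{lem:comparison}'' silently proves the corrected statement rather than the printed one, which is the right thing to do but should be said explicitly.
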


Note that to compare weights, it is necessary to mention the curvature bound \(a\) or
\(b\).
\begin{lemma}[]
\label{lem:compare-geodesic}
For any \(a, b\geq 0\) and \(u\geq v \geq 0\),
\begin{enumerate}
\item \(P_1 = \cosh v r\) is weaker than \(P_2 = \cosh ur\) when \(K_M\leq -a^2\),
\item \(P_1 = \cos ur\) is weaker than \(P_2 = \cos vr\) on the interval \([0. \frac{\pi}{2u}]\) when \(K_M\leq b^2\).
\end{enumerate}
\end{lemma}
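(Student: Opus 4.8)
The plan is to unwind the definition of ``weaker'' and reduce each statement to the monotonicity of a single ratio of elementary functions. By the criterion recorded in Lemma \ref{lem:comparison-geodesic}, $P_1$ is weaker than $P_2$ precisely when $\frac{P_1}{Q_1}\le\frac{P_2}{Q_2}$ throughout the interval in question; since both $Q_i$ are positive there, this is the same as $P_1(t)Q_2(t)\le P_2(t)Q_1(t)$. Writing $s(r)$ for the space-form density appearing in \eqref{eq:Q-geodesic} (so $s(r)=\omega_{k-1}a^{1-k}\sinh^{k-1}(ar)$ when $K_M\le-a^2$ and $s(r)=\omega_{k-1}b^{1-k}\sin^{k-1}(br)$ when $K_M\le b^2$), we have $Q_i(t)=\int_0^t P_i(r)s(r)\,dr$, and the desired inequality becomes
\[
\int_0^t \bigl[P_1(t)P_2(r)-P_2(t)P_1(r)\bigr]\,s(r)\,dr \le 0.
\]

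Next I would record the clean sufficient condition: because $s\ge0$, the integrand is nonpositive as soon as the factor $P_1(t)P_2(r)-P_2(t)P_1(r)$ is nonpositive for all $r\le t$, and (on the subinterval where $P_1>0$) the latter is equivalent to the ratio $P_2/P_1$ being nondecreasing, via $\frac{P_2(r)}{P_1(r)}\le\frac{P_2(t)}{P_1(t)}$. Thus case (1) reduces to showing $r\mapsto\cosh(ur)/\cosh(vr)$ is nondecreasing, and case (2) to showing $r\mapsto\cos(ur)/\cos(vr)$ is nonincreasing; in case (2) the single right endpoint $t=\frac{\pi}{2u}$, where $P_1$ vanishes, gives $\frac{P_1}{Q_1}=0\le\frac{P_2}{Q_2}$ trivially and may be discarded.

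Both monotonicity claims I would settle by differentiating and simplifying the numerator with the product-to-sum identities. For case (1), the numerator of the derivative of $\cosh(ur)/\cosh(vr)$ is $u\sinh(ur)\cosh(vr)-v\cosh(ur)\sinh(vr)$, which rewrites as $\tfrac{u-v}{2}\sinh((u+v)r)+\tfrac{u+v}{2}\sinh((u-v)r)$; since $u\ge v\ge0$ and $r\ge0$, every term is nonnegative, so the ratio increases. For case (2), the analogous computation yields $-\tfrac{u-v}{2}\sin((u+v)r)-\tfrac{u+v}{2}\sin((u-v)r)$ for the numerator of the derivative of $\cos(ur)/\cos(vr)$, and here lies the main obstacle: one must guarantee that both sines are nonnegative, which is exactly what forces the restriction to $[0,\tfrac{\pi}{2u}]$. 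On that interval $(u+v)r\le\tfrac{\pi}{2}\bigl(1+\tfrac{v}{u}\bigr)\le\pi$ and $(u-v)r\le\tfrac{\pi}{2}$, so $\sin((u+v)r),\sin((u-v)r)\ge0$, the numerator is nonpositive, and the ratio is nonincreasing as required. Substituting these sign facts back into the displayed integral inequality then yields the two claimed comparisons.
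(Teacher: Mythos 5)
Your proof is correct. Note that the paper states Lemma \ref{lem:compare-geodesic} without any proof, so there is no argument of record to match; the closest model is the one-line proof of the analogous Lemma \ref{lem:compensated-comparison}, which reduces a weight comparison to checking that the derivatives of \(Q_i/P_i\) are ordered and that the two ratios agree at the starting level. Your route is genuinely different and, for this lemma, more robust. You fix \(t\), cross-multiply the defining inequality \(P_1/Q_1\leq P_2/Q_2\) into \(\int_0^t\bigl[P_1(t)P_2(r)-P_2(t)P_1(r)\bigr]s(r)\,dr\leq 0\), and force the integrand to be pointwise nonpositive by showing \(P_2/P_1\) is nondecreasing --- a Chebyshev-type argument. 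The two derivative computations via product-to-sum identities are right, and you correctly isolate where \(u\geq v\) and the restriction to \([0,\tfrac{\pi}{2u}]\) enter in case (2): they are exactly what make \(\sin((u+v)r)\) and \(\sin((u-v)r)\) nonnegative; you also dispose of the endpoint where \(\cos(ut)=0\) and, implicitly through the cross-multiplied form, of \(t=0\) where both \(Q_i\) vanish. Two things your approach buys over the paper's template: (i) it never uses the specific form of the space-form density \(s\) in \eqref{eq:Q-geodesic}, only \(s\geq 0\), so one proof covers both curvature regimes simultaneously (the curvature bound enters only through the eligible interval); (ii) the derivative-comparison template does not reduce term-by-term here --- for the hyperbolic weights it asks for \(Q_1\, v\sinh(vr)\cosh^{-2}(vr)\leq Q_2\, u\sinh(ur)\cosh^{-2}(ur)\), and the second factors are ordered the \emph{wrong} way for large \(r\) (the \(u\)-term decays like \(2u e^{-ur}\)), so a coupled estimate involving the growth of \(Q_2/Q_1\) would be unavoidable. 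Your argument sidesteps that difficulty entirely.
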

Lemma \ref{lem:compare-geodesic} can be seen as a continuous version of the chain
\eqref{eq:chain}. In particular, when \(K_M\leq -a^2\) the monotonicity theorem holds for
any weight \(P_u =\cosh ur\) with \(u\in [0, a)\), including the uniform weight. When
\(K_M\leq b^2\), the monotonicity theorem holds for any weight \(P_u = \cos ur\) with
\(u\in[b,\infty)\). The second part of Lemma \ref{lem:compare-geodesic} can be used
to obtain a lower bound of volume in this case.

\begin{proposition}[]
\label{prop:uniform-area-sphere}
Suppose that \(K_M\leq b^2\). Let \( \Sigma \) be a minimal \( k \)-submanifold with
multiplicity \( m \) at the point \(O\) and no boundary in the interior of the ball
\(B(O,t)\) of radius \(t<r_{\max}\). Then
\[
A(\Sigma\cap B(O,t)) \geq m \omega_{k-1}\int_{r=0}^{t} \frac{\sin^{k-1}(br)}{b^{k-1}}dr.
\]
\end{proposition}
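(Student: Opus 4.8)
The plan is to run the comparison machinery of Section \ref{sec:bounded-curv} between the natural weight $P = U = \cos br$ and the uniform weight $P = 1$ — the positive-curvature analogue of the chain \eqref{eq:chain} — and then to read off the lower bound from the value of the density at $O$.

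First I would apply Theorem \ref{thm:monotonicity-geodesic} to $\Sigma$ (it passes through $O$, so its geodesic-ray extension is unnecessary), which tells me that the naturally weighted density $\Theta_U(t)$ is increasing on the eligible interval $[0, r_{\max})$. The hypothesis that $\Sigma$ has no boundary inside $B(O,t)$ is exactly what guarantees that the only boundary contribution in \eqref{eq:stokes} comes from the level set $\{r = t\}$, so the monotonicity applies on the whole ball. Next I would identify the limiting value $\lim_{t \to 0^+}\Theta_U(t)$. Since $Q_U(t) = \frac{\omega_{k-1}}{k}b^{-k}\sin^k bt$ is precisely the naturally weighted volume of a geodesic $k$-ball in the space form, and since $\Sigma$ has multiplicity $m$ at $O$ (its tangent cone is $m$ totally geodesic $k$-discs, each of natural density $1$), the density tends to $m$. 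Monotonicity then gives $\Theta_U(t) \geq m$ on the eligible interval.

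The comparison step is the crux. By Lemma \ref{lem:compare-geodesic}(2) with $u = b$ and $v = 0$, the weight $\cos br$ is weaker than the uniform weight $1 = \cos(0\cdot r)$ on $[0, \frac{\pi}{2b}]$, which contains the eligible interval. Writing $P_2 = \cos br$ (whose density $\Theta_2 = \Theta_U$ I already know is increasing) and $P_1 = 1$, I am exactly in the hypothesis of Lemma \ref{lem:comparison-geodesic}(2): $P_2$ is weaker than $P_1$. I therefore conclude $\Theta_1 \geq \Theta_2 \geq m$, where $\Theta_1$ is the unweighted density. I do not — and cannot — assert that $\Theta_1$ is itself monotone; the Clifford torus shows the unweighted density need not be, which is why only the second, non-monotone half of the comparison lemma is available. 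Only the pointwise inequality $\Theta_1 \geq \Theta_2$ is used.

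Finally I would unwind the definitions. Since $\Theta_1(t) = A(\Sigma \cap B(O,t))/Q_1(t)$ with $Q_1(t) = \omega_{k-1}\int_0^t b^{-(k-1)}\sin^{k-1}(br)\,dr$ the unweighted ($P=1$) volume of a geodesic ball in the space form, the inequality $\Theta_1(t) \geq m$ rearranges directly into the claimed bound $A(\Sigma \cap B(O,t)) \geq m\,\omega_{k-1}\int_0^t b^{-(k-1)}\sin^{k-1}(br)\,dr$. The main subtlety, rather than any computation, is keeping the two comparison directions straight: because the uniform weight is \emph{stronger} than the natural one in positive curvature — the reverse of the Euclidean intuition — one must invoke the inequality-only conclusion of the comparison lemma rather than transporting monotonicity.
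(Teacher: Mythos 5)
Your proof is correct and is exactly the argument the paper intends: Proposition \ref{prop:uniform-area-sphere} is stated without an explicit proof, as a direct consequence of the remark that ``the second part of Lemma \ref{lem:compare-geodesic} can be used to obtain a lower bound of volume,'' and your chain --- Theorem \ref{thm:monotonicity-geodesic}, density tending to $m$ at $O$, $\cos br$ weaker than $1=\cos(0\cdot r)$ by Lemma \ref{lem:compare-geodesic}(2), then the inequality-only part (2) of Lemma \ref{lem:comparison-geodesic}, and finally unwinding $\Theta_1\geq m$ into the volume bound --- is precisely that intended argument. You also correctly flag the one genuinely delicate point, namely that since the natural weight is weaker than the uniform weight in positive curvature, only the non-monotone conclusion $\Theta_1\geq\Theta_2$ is available, not monotonicity of the unweighted density itself.
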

In particular, if \(M\) is simply connected, with curvature pinched between
\(\frac{b^2}{4}\) and \(b^2\) and \(\Sigma\subset M\) is a closed minimal submanifold,
then \( A(\Sigma)\geq \frac{1}{2}\omega_k b^{-k}\). A weaker version of this, with
\(\frac{1}{2}\omega_k\) replaced by the volume of the unit \(k\)-ball, was proved in
\cite{Hoffman.Spruck74_SobolevIsoperimetricInequalities}.

\bibliographystyle{amsplain}
\bibliography{GeoDiff}

\end{document}